\documentclass[11pt,final,
]{article}
\usepackage{epsfig}
\usepackage{amsfonts}
\usepackage{natbib}
 \bibpunct[, ]{(}{)}{,}{a}{}{,}%

\usepackage[margin=1in]{geometry} 

\usepackage{setspace}
\setdisplayskipstretch{0.85}

\newcommand{\ls}[1]
  {\dimen0=\fontdimen6\the\font \lineskip=#1\dimen0
  \advance\lineskip.5\fontdimen5\the\font \advance\lineskip-\dimen0
  \lineskiplimit=.9\lineskip \baselineskip=\lineskip
  \advance\baselineskip\dimen0 \normallineskip\lineskip
  \normallineskiplimit\lineskiplimit \normalbaselineskip\baselineskip
  \ignorespaces }

\usepackage{lineno}
\usepackage{amssymb}
\usepackage{amsmath}
\usepackage{amsthm}
\usepackage{latexsym}
\usepackage{graphicx}
\usepackage{bm}
\usepackage{tabularx}
\usepackage{booktabs}
\usepackage{enumitem}

\setlist[itemize]{leftmargin=1.6em,  labelsep=0.4em, topsep=1pt, itemsep=0pt, parsep=0pt, partopsep=0pt}
\setlist[enumerate]{leftmargin=1.6em,  labelsep=0.4em, topsep=1pt, itemsep=0pt, parsep=0pt, partopsep=0pt}

\usepackage{caption}
\usepackage{subcaption}
\usepackage[titletoc]{appendix}

\usepackage{mathtools}
\usepackage{multirow}
\usepackage{authblk}
\newcommand{\field}[1]{\mathbb{#1}}
\def\N{\field{N}}
\def\A{\field{A}}                                
\def\X{\field{X}}                                

\def\Z{\field{Z}}

\catcode`\@=11

\numberwithin{equation}{section}
\numberwithin{table}{section}
\numberwithin{figure}{section}

\theoremstyle{plain}
\newtheorem{theorem}{Theorem}[section]
\newtheorem{lemma}[theorem]{Lemma}
\newtheorem{corollary}[theorem]{Corollary}
\newtheorem{proposition}[theorem]{Proposition}

\theoremstyle{definition}
\newtheorem{definition}{Definition}[section]
\newtheorem{assumption}[definition]{Assumption}

\theoremstyle{remark}

\graphicspath{{./}{Figs/}}

\usepackage[colorlinks=true,linkcolor=magenta,citecolor=blue,pagebackref]{hyperref}

\title{Telehealth Control Policies: Bridging the Gap Between Patients and Doctors}
\author[1]{Shuwen Lu}
\author[2]{Mark E. Lewis}
\author[3] {Jamol Pender}

\affil[1]{\small Department of Systems Engineering, Cornell University, Ithaca, NY 14850, USA {\tt\small sl3243@cornell.edu}}%
\affil[2,3]{\small School of Operations Research and Information Engineering,
        Cornell University, Ithaca, NY 14850, USA
        {\tt\small mark.lewis@cornell.edu, jjp274@cornell.edu}}

\date{}

\begin{document}
\maketitle
\setlength{\abovedisplayskip}{0pt plus 2pt minus 2pt}
\setlength{\belowdisplayskip}{0pt plus 2pt minus 2pt}
\setlength{\abovedisplayshortskip}{0pt plus 2pt minus 2pt}
\setlength{\belowdisplayshortskip}{0pt plus 2pt minus 2pt}
\ls{1.56}
\begin{abstract}
This paper studies a sequential decision-making problem in a two-stage queueing system modeled after operations in CVS MinuteClinics, where nurse practitioners (NPs) oversee patient care throughout the entire visit. All services are non-preemptive, and NPs cannot begin treating a new patient until the current patient has completed both stages of care. Following an initial diagnosis in the upstream phase, NPs must decide for low-acuity patients whether to proceed with treatment independently through immediate service, or to collaborate with a dedicated general physician (GP) via telemedicine. While collaboration typically improves service quality and is preferred by individual patients, it may introduce delays as the NP-patient pair waits for a GP to become available. 

This work explores the structural properties of optimal policies under different system parameters, with a focus on large initial upstream queues, revealing unconventional and complex policy behaviors. Leveraging these structural insights and supporting theoretical results, we design simple and effective heuristics that are computable in linear time and suitable for practical implementation. These heuristics are robust across the entire parameter space of interest, and offer clear, actionable guidance for NPs as system parameters vary. They also achieve near-optimal performance, averaging within 0.1\% of the optimal, while commonly used benchmark policies are highly sensitive to parameter shifts and can incur costs more than 100\% higher than optimal. 
The work provides applicable insights for decision-makers on improving policy robustness and effectiveness, as well as recommendations for stakeholders on the value of investing in telemedicine infrastructure. For instance, we identify scenarios where such investments may be either unnecessary or essential based on specific system parameters.

\end{abstract}

\section{Introduction}
Telemedicine has gradually expanded with advances in digital technology, offering greater convenience, time efficiency, and cost-effectiveness for both patients and medical service providers (MSPs) \citep{calton2019top}. Through our corporate partnership with CVS Telemedicine, we observe that CVS operates a nationwide network of readily accessible brick-and-mortar locations, many of which are equipped with the IT infrastructure to support on-site telemedicine visits. 
Historically, broad adoption of telemedicine was constrained by regulatory inconsistency and service-quality concerns. The COVID-19 pandemic, however, accelerated the shift to virtual appointments, prompting regulatory relaxation and expanded training of MSPs to deliver quality care across state lines \citep{loeb2020departmental, contreras2020telemedicine, calton2020telemedicine}. Under these more flexible conditions, telemedicine has demonstrated potential to reduce patient waiting and improve efficiency by lowering physician idle time, helping to mitigate ongoing labor shortages \citep{lu2024physician}.

This work develops a model that captures the operational benefits of telemedicine while addressing service-quality considerations in decision-making, with particular emphasis on settings with a large initial queue. 
In CVS MinuteClinics, patient care proceeds in two stages as seen by the illustration given in Figure \ref{fig:CVS_queue}. Patients first enter the (upstream) \textit{triage} phase, in which a nurse practitioner (NP) conducts an initial diagnosis and remains with the patient throughout the visit. Based on the assessment, the NP determines the appropriate second-stage service (downstream), specifically whether collaboration with a general physician (GP) is necessary. Higher-acuity patients require collaborative care, whereas for low-acuity cases, physician involvement is optional, and the NP must decide whether to proceed independently or seek collaboration. Although collaboration enhances service quality, it introduces the risk of additional delay, as the NP–patient pair may need to join a second queue to wait for an available GP, who are typically fewer in number than NPs.

Based on practical operational needs, we incorporate two key modeling requirements. First, the decisions must remain impartial to the preferences of individual NPs and patients, which in practice may vary with NP experience or patient attitudes and may favor collaboration even when it downgrades system performance. 
Independent care begins immediately but delivers lower quality without GP involvement, whereas collaboration can delay the downstream service while awaiting GP availability, creating compound blocking effects that can increase waiting for subsequent patients.
Second, the question of when to call a GP is of interest particularly in the aftermath of demand spikes when the clinic attempts to return to its normative workload. Such demand surges occur frequently, including lunch and weekend peaks, post-holiday and start-of-week backlogs, and seasonal outbreaks of flu, allergies, or COVID-19. Prior research shows that high nursing workloads increase stress and burnout among NPs and registered nurses (RNs) \citep{greenglass2003reactions} and can compromise patient safety \citep{lang2004nurse}.

To meet these practical requirements, we model the sequential decision-making process as a Markov decision process (MDP) clearing system. NPs observe the system states through an information technology implementation, including the number of patients upstream and those in the continued downstream phase (both with and without a collaborating GP). To reflect the relative importance of patients having to wait at different parts of the system, we assign different costs per patient per unit time in each station of service, referred to as \textit{holding costs} in alignment with standard queueing theory terminology.
To capture recovery from demand surges, we adopt a clearing system model where the “zero state” represents the normative workload level and seek a control policy that minimizes the total cost incurred until the system empties.

In developing control policies, it is important to recognize that MSPs must prioritize patient care over operational concerns. This motivates the need for practical, non-prescriptive policies that naturally integrate with clinical workflows. At the same time, the problem size increases rapidly with the number of NPs, referred to as \textit{servers} in queueing literature, making exact optimal solutions computationally expensive or even infeasible, particularly when the system starts with a large initial upstream queue. Moreover, healthcare settings often demand real-time decisions following initial consultations, especially in the presence of time-sensitive risks. These challenges underscore the importance of simple, implementable heuristics. 

By analyzing the structural properties of optimal policies across the entire parameter space, we are able to develop effective and robust heuristics to support real-time decision-making. Numerical experiments demonstrate that our heuristic achieves near-optimal performance, averaging within 0.1\% of the optimal cost, while substantially outperforming existing parameter-sensitive policies, which can incur costs exceeding the optimum by over 100\%. In addition to its strong quantitative performance, the heuristic provides actionable guidance for NPs in adapting to changes in the system parameters. For instance, as NPs receive more training and their independent service rates improve relative to collaborative care, the recommended actions should naturally adjust to reflect these efficiency gains.

Importantly, our insights are intended not only to support frontline decision-makers, but also to inform broader system design and strategic investment. 
In particular, our framework identifies conditions under which investment in telemedicine-enabling technologies can lead to meaningful improvements in system performance. For example, in some regions of the parameter space, the optimal strategy involves seeking physician collaboration only when necessary, while in others, collaboration may always be called. Understanding where these thresholds lie enables stakeholders to assess whether expanding telemedicine capabilities is essential or potentially inefficient, thereby supporting evidence-based decisions on infrastructure and resource allocation.

The remainder of the paper is organized as follows. Section \ref{sec:manage_insights} presents key managerial insights, followed by a review of relevant literature in Section \ref{sec:lit_review}. Section \ref{sec:model} formally describes the model, and Sections \ref{sec:main_results} and \ref{sec:support_results} present the main theoretical results and supporting analyses, respectively. Section \ref{sec:heuristics} outlines the construction of our proposed heuristics, and Section \ref{sec:numerical} reports the results of our numerical study. Finally, Section \ref{sec:conclusions} concludes the paper. Proofs of the main theorems, supporting results, and details related to the heuristic design are provided in Appendix \ref{sec:appendix}.

\subsection{Managerial insights} \label{sec:manage_insights}
Our work investigates a two-stage controlled queueing system in which, following an initial screening, a decision is made between two downstream service options: one that offers immediate processing, and another that delivers a more comprehensive outcome with higher service quality, but relies on limited, reusable resources that potentially introduce delays. This framework yields several managerial insights relevant to practice:

\begin{itemize}
    \item \textbf{Model Generality.} 
    The framework applies to a broad class of systems where the second option in the downstream relies on constrained resources. Beyond healthcare settings involving limited access to specialists, similar structures appear in customer service or technical support, where cases may be escalated to higher-tier agents or domain experts. In manufacturing and logistics, this includes routing jobs to specialized machinery or capacity-limited facilities that restrict concurrent processing.

    The concept of service quality in this context extends beyond traditional measures such as technical accuracy or performance. It also encompasses dimensions such as customer satisfaction, gains in operational efficiency through automation, and the convenience enabled by digital or computer-assisted processes.
    \item \textbf{System Recovery and Job Completion.}
    We study control policies that guide the system back to a normative state following periods of elevated demand. This analysis is especially valuable for businesses facing demand volatility, offering insights into how to anticipate congestion and allocate limited resources more effectively.
    \item \textbf{System-Level Optimality Over Local Preferences.} 
    Our model emphasizes optimizing system-wide performance rather than catering to the immediate preferences of individual jobs or servers. This perspective equips managers to evaluate when local, short-term preferences may conflict with broader operational goals.
    
    \item \textbf{Policy Insights Under Heavy Initial Load.}
    We examine the structural properties of optimal policies with a focus on large initial upstream queues. 
    Our analysis identifies parameter regions where collaborative care is preferable when faced with heavy initial workloads, offering guidance for effective response strategies in high-demand scenarios.
    
    \item \textbf{Scalable, Robust, and Accurate Heuristics.}
    We design simple yet effective heuristics that apply across the entire parameter space and can be implemented in linear time. In particular, we develop a threshold-type policy with the associated threshold computable in constant time, enabling fast and practical decision-making. This efficiency is especially valuable when exact solutions or learning-based methods are computationally infeasible. In numerical experiments, the heuristics demonstrate strong robustness and near-optimal performance, consistently outperforming existing benchmark policies.
    
    \item \textbf{Practical Guidance for Decision-Makers and Stakeholders.}
    Our results offer decision-makers actionable strategies for adjusting policies in response to parameter changes and provide managers with guidance on dynamic resource allocation across multiple locations. We also offer recommendations for stakeholders on when to invest in expanding constrained resources, such as acquiring specialized equipment or hiring expert personnel. For example, in certain parameter regions, collaborative service is consistently preferred, highlighting the need for telemedicine implementation and targeted capacity expansion.

\end{itemize}

\subsection{Literature review} \label{sec:lit_review}
A substantial body of research has investigated optimal policies for controlled queueing systems. These studies span various settings, including tandem queues, parallel queueing systems, and hybrid systems with both tandem and parallel branches \citep{hordijk1992shortest}, such as the well-studied N-network \citep{harrison1998heavy, bell2001dynamic, ahn2004optimal, down2010n}. 
For tandem queueing systems, much of the control literature focuses on server configuration policies, where flexible servers are dynamically allocated to different stages of service \citep{pandelis1994optimal, ahn1999optimal, zayas2016dynamic}. Some models also consider combinations of flexible and dedicated servers \citep{wu2006dynamic, wu2008heuristics, pandelis2008optimal}. In contrast, in parallel queueing structures, control typically refers to routing policies that assign arriving jobs to different servers or queues \citep{hordijk1992assignment, down2006dynamic}. Alternatively, in single-server systems with parallel arrival streams, the control problem involves dynamic scheduling decisions, i.e., selecting which class of job to serve next \citep{nain1989interchange, huang2022dynamically}.

While our model incorporates structural similarities of both tandem and parallel elements, its operational constraints differ significantly from classical formulations. Specifically, NPs accompany patients through both service stages and cannot be reassigned once engaged, eliminating the server-configuration flexibility typically found in tandem models. 
If NPs were instead fixed to a particular stage, the system dynamics could be modeled using a standard Jackson network framework \citep{massey1984operator, massey1986family, massey1987calculating, pender2017approximating}. 
On the parallel side, the decision of whether to seek collaborative care constitutes a routing control. However, the asymmetric roles of MSPs create a unique dynamic: NPs remain tied to individual patients while GPs serve as limited and dedicated downstream resources. Service rates are jointly determined by these personnel constraints, setting our model apart from others. Extending the work in \citep{lu2026balancingindependentcollaborativeservice}, where decisions are made upon the initial job interaction, this two-stage model adds an upstream assessment phase, allowing additional time to evaluate whether to pursue collaboration.

Despite the practical importance of control policies in healthcare, there are relatively few analytical studies in this area. Among the existing work, \citet{dobson2012queueing} model the interaction between attending physicians and residents as a three-stage tandem queue and derive work prioritization policies to maximize throughput in a system with one attending physician and up to two residents. \citet{andradottir2021optimizing} examine a two-stage service system for the same interaction, focusing on how attending physicians allocate time between working with residents and attending to their own tasks. Their model allows an arbitrary number of residents and aims to maximize the long-term reward, with throughput optimization as a special case. More generalized work is presented in \citep{yu2024optimal, niyirora2016optimal}, which analyzes systems with multiple attending physicians (referred to as supervisors), includes customer abandonment, and considers a distinct cost structure.

One of the most prevalent techniques for analyzing optimal policy structures is \textit{sample path arguments} \citep{liu1992optimal, nain1994optimal}. Rather than handling the difficulty of probabilistic analysis, these arguments compare outcomes along individual sample paths of the stochastic process, defined on a common probability space. The formal foundations laid by \citet{liu1995sample} subsequently lead to a surge of applications across various domains. For example, in communication networks and online service systems, \citet{paschalidis2000congestion} use this method to establish monotonicity and concavity properties of the relative reward function, ultimately leading to monotonic pricing policies. In the healthcare domain, particularly emergency departments, \citet{zayas2016dynamic} derive sufficient conditions under which customers with higher rewards are prioritized in a tandem queue with customer impatience. From the perspective of energy efficiency, \citet{badian2021optimal} prove the existence of a work-conserving, monotone optimal policy with respect to the number of jobs in the system. 
Despite their elegance, sample path arguments require structural conclusions to hold along individual sample paths rather than in expectation, whereas value functions are defined in expectations. Moreover, these methods suggest decision structures by revealing only qualitative relationships between value functions, without quantifying the magnitude of differences. These distinctions highlight both the strengths and limitations of the approach.

Another powerful analytical approach is the use of inductive methods. Induction on the decision horizon is commonly employed to establish convexity in single-dimensional problems and submodularity in multi-dimensional ones \citep{koole1998structural}. This approach is particularly natural for finite-horizon problems \citep{chong2018two, yoon2004optimal}, though it can also be extended to infinite-horizon models through limiting arguments \citep{kumar2013dynamic}, where the finite-step value function is shown to converge to its infinite-horizon counterpart \citep{feinberg2007optimality}. However, induction on decision epochs is not well-suited to our setting, as optimal decisions over a short horizon may be myopic when viewed from the perspective of an infinite horizon, thereby obscuring structural regularities.
Instead, a more natural inductive approach in clearing systems is to proceed by the number of jobs or services in the system (or in the queue), as demonstrated in prior work \citep{ahn2004optimal, argon2008scheduling}. This job-based induction enables the characterization of policies that respond to system load rather than time horizon, aligning with the core objective of clearing system models: to efficiently recover from demand surges by emptying the system.

\section{Model Description and Main Results} \label{sec:model}
In our model, we consider a fixed patient population within the MinuteClinic, with no external arrivals. The queueing dynamic is shown in Figure \ref{fig:CVS_queue}. All services are non-preemptive and delivered on a first-come-first-served basis. Let $p$, independent across patients, denote the probability that a patient is classified as high acuity and therefore requires physician involvement. 
We assume that GPs are dedicated to the telemedicine operation. Practically, this corresponds to a physician being assigned to MinuteClinic telehealth consultations during a scheduled shift and remaining on call while performing other duties. From a modeling perspective, this implies that a GP becomes immediately available for the next patient upon completing the current.


\begin{figure*} 
    \centering
    \begin{subfigure}[htbp]{0.45\textwidth}
        \centering
        \includegraphics [scale=.4] {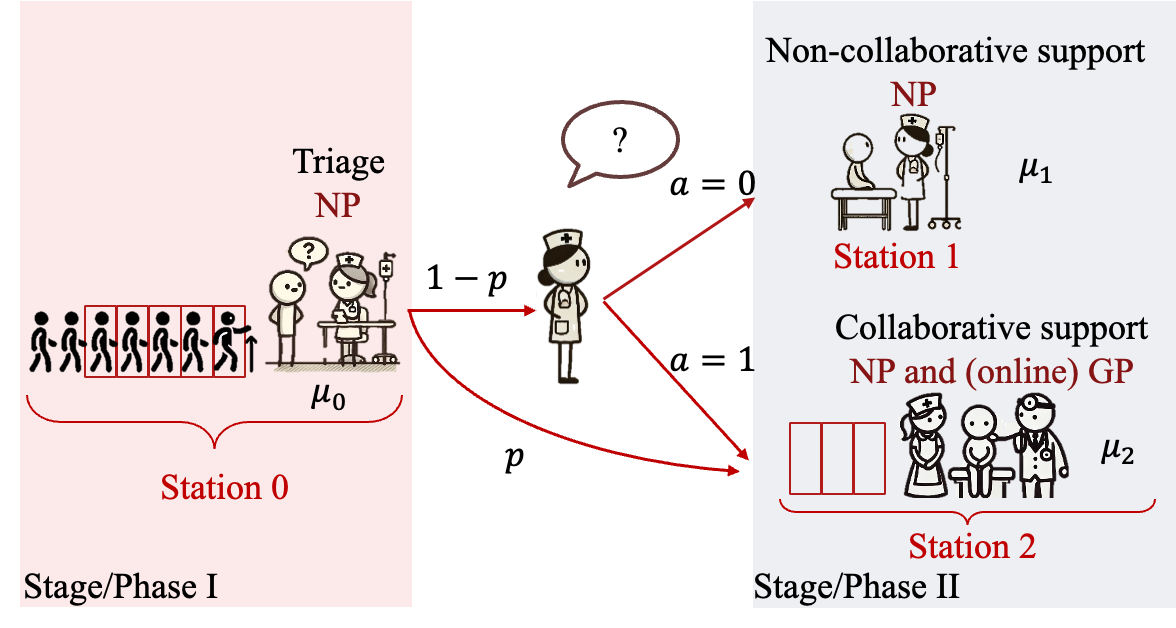}
        \caption{CVS two-stage queueing model.}
        \label{fig:CVS_queue}
    \end{subfigure} 
    \hfill
    \begin{subfigure}[htbp]{0.45\textwidth}
        \centering
        \includegraphics [scale=.4] {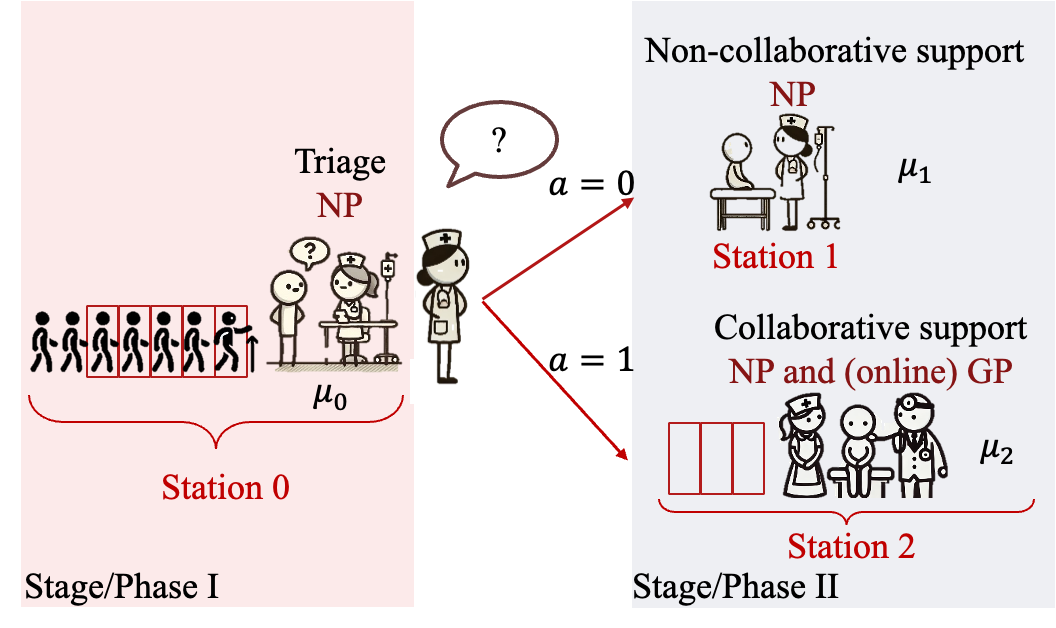}
        \caption{Assume $p=0$ in the model.}
        \label{fig:MDP_queue}
    \end{subfigure} 
    \caption{Diagram of Two-Stage Collaborative Support Model.} \label{fig:CVS}
\end{figure*}


Let $C^p$ and $C^G$ denote the total number of available NPs and GPs at the clinic, respectively, both assumed to be positive finite integers. To reflect typical staffing practices, e.g., in urban areas, we assume $C^G < C^p$ unless otherwise noted; the case $C^G \geq C^p$ is treated separately as a special case.
For analytical convenience, we label the triage, non-collaborative, and collaborative service stations as Stations 0, 1, and 2. Assume service times at Station $n$ are exponentially distributed with rate $\mu_n$, and each patient incurs a holding cost of $h_n$ per unit time at Station $n$, for $n = 0,1,2$.

We formulate the problem as a Markov Decision Process (MDP) clearing system and seek a control policy that minimizes the total expected cost required to empty the system. Considering the operational constraints of the clinic, the state space is defined as
\begin{equation*}
    \X = \Big\{(i,j,k,\ell)\in \Z_+^4\Big{|} i=0, j+k+\ell < C^p \text{ or } i \geq 0, j+k+\ell = C^p\Big\},
\end{equation*}
where
\begin{itemize}
    \item $i$ is the number initially waiting (excluding those in service) at the triage station,
    \item $j$ is the number in service at the triage station,
    \item $k$ is the number of patients currently receiving non-collaborative service, and
    \item $\ell$ is the number of patients at the collaborative station (including those
        in service).
\end{itemize}

A decision is made only when triage is completed, and the patient does not require elevated service from a GP. The set of feasible decision states is therefore defined as:
\begin{align*}
    \X_{D} := \Big\{x \in \X | j \geq 1\}. 
\end{align*} 
The associated action set is given by:
\begin{align*}
    \A(i,j,k,\ell) & = \begin{cases}
        \{0,1\} & \text{if $j \geq 1$,}\\
        \{0\} & \text{otherwise.}
    \end{cases}
\end{align*}
Each allowable action $a \in \A(i,j,k,\ell)$ is a binary indicator:
\begin{itemize}
    \item $a = 0$ indicates that the NP chooses to serve independently (non-collaborative care),
    \item $a = 1$ denotes that a GP is (or will be) engaged for collaborative care.
\end{itemize}  
Each patient faces only one decision point during their visit through the system, namely following the completion of triage if collaboration is not mandated. 

Controlled by a given control policy $\pi$, define $\{Q^{\pi}_{0,0}(t), t\geq 0\}$ and $\{Q^{\pi}_{0,1}(t), t\geq 0\}$ as the stochastic processes denoting the number of patients queueing and in service at the triage station at time $t$, respectively. Let $\{Q^{\pi}_1(t), t\geq 0\}$ and $\{Q^{\pi}_2(t), t\geq 0\}$ represent the number of patients at the non-collaborative and collaborative stations at time $t$, respectively. 
Under the policy $\pi$, the total cost $T^{\pi}$ given the initial state $s=(i,j,k,\ell)$ is
\begin{align}
    T^{\pi}(s) &  = \int_{0}^{\infty}\Big(h_0\big(Q^{\pi}_{0,0}(t) + Q^{\pi}_{0,1}(t)\big) + h_1 Q^{\pi}_1(t) + h_2 Q^{\pi}_2(t)\Big) dt, \label{eq:total_cost_sto}
\end{align}
where $\big(Q^{\pi}_{0,0}(0), Q^{\pi}_{0,1}(0), Q^{\pi}_1(0), Q^{\pi}_2(0)\big) =(i,j,k,\ell) = s$.
Starting with any finite number of patients, $T^{\pi} < \infty$ almost surely under any non-idling policy. The objective is to find a policy that minimizes $\mathbb{E}\big[T^{\pi}(s)\big]$ given any (fixed) initial state $s=(i,j,k,\ell)$. 

Since $p$ is independent of the states, without loss of generality, we assume $p = 0$ in the derivation of structural results. The resulting queueing structure is depicted in Figure \ref{fig:MDP_queue}.
Define the \textit{value function} $v(i,j,k,\ell)$ to be the optimal expected total cost starting in state $(i,j,k,\ell) \in \X$ until the system finishes all existing services so that $v(0,0,0,0) = 0
$. According to the \textit{Principle of Optimality} (c.f.
Section 4.3 in \citep{puterman2014markov}), the value functions satisfy the following optimality equations (c.f. Theorem 7.3.3 in \citep{puterman2014markov} discrete-time MDP) for non-zero states.
\begin{enumerate}
    \item If $i = 0$,
    {\small
    \begin{align}\label{eq:opt-zero}
        \begin{split}
            v(0,j,k,\ell) & = \frac{j h_0+ k h_1 + \ell h_2 }{d(j,k,\ell)} + \Big[ \frac{j \mu_0}{d(j,k,\ell)} \min\{v(0, j-1, k+1, \ell), v(0, j-1, k, \ell+1)\} \\
                & \quad  + \frac{k \mu_1}{d(j,k,\ell)} v(0, j, k-1, \ell)  + \frac{\min\{\ell, C^G\}\mu_2}{d(j,k,\ell)} v(0, j, k, \ell-1)  \Big],
        \end{split}
    \end{align}
    }
    \item If $i \geq 1$,
    {\small
    \begin{align}\label{eq:opt1}
        \begin{split}
            v(i,j,k,\ell) & = \frac{(i+j) h_0+ k h_1 + \ell h_2 }{d(j,k,\ell)} + \Big[ \frac{j \mu_0}{d(j,k,\ell)} \min\{v(i, j-1, k+1, \ell), v(i, j-1, k, \ell+1)\} \\
            & \quad + \frac{k \mu_1}{d(j,k,\ell)} v(i-1, j+1, k-1, \ell) + \frac{\min\{\ell, C^G\}\mu_2}{d(j,k,\ell)} v(i-1, j+1, k, \ell-1) \Big],
        \end{split}
    \end{align}
    }
\end{enumerate}where $d(j,k,\ell) := j\mu_0 + k\mu_1 + \min\{\ell,C^G\}\mu_2$.
Note that $d(j,k,\ell)$ is the overall service rate for state $(i,j,k,\ell)\in \X$.
One observation from the equations above is that the decision of whether to seek collaborative care in state $(i,j,k,\ell) \in \X_D$ depends on the sign of the following value difference:
\begin{align}
    D(i,j,k,\ell) := v(i, j-1, k+1, \ell) - v(i, j-1, k, \ell+1). \label{def:D}
\end{align}
This reflects the difference in future cost between choosing non-collaborative and collaborative care.

In our model, the holding cost parameters $h_n$ are specified by the manufacturer or designer to represent the relative importance per unit of time in each station. A lower value of $h_n$ indicates a greater tolerance for patients spending time at Station $n$. For example, a ratio of $\frac{h_1}{h_2} = 2$ implies that staying in the non-collaborative station is considered twice as costly as in the collaborative one. In contrast, service rate parameters $\mu_n$ capture the speed at which care is delivered and are determined by factors such as provider expertise and digital infrastructure. These rates can typically be measured directly. The relative magnitudes of $\mu_1$ and $\mu_2$ vary across settings. For instance, $\mu_1 > \mu_2$ corresponds to faster service by NPs acting alone, while $\mu_2 > \mu_1$ reflects improved efficiency through collaboration with GPs. 

The ratio $\frac{h_n}{\mu_n}$, $n = 0, 1,2$, provides a measure of service quality (cost-efficiency) at each station, representing the expected cost of completing the service itself. 
Unless stated otherwise, our analysis proceeds under the following assumption, which asserts that collaborative care is more cost-efficient than non-collaborative care, and is therefore preferred from an individual patient’s perspective:
\begin{assumption} \label{assm:preferred_collab}
    $\frac{h_1}{\mu_1} > \frac{h_2}{\mu_2}$.
\end{assumption}
Evaluating the value difference in \eqref{def:D} at a base state yields
\begin{align*}
    D(0,1,0,0) = v(0,0,1,0) - v(0,0,0,1) = \frac{h_1}{\mu_1} - \frac{h_2}{\mu_2}.
\end{align*}
This shows that Assumption \ref{assm:preferred_collab} corresponds exactly to the condition under which collaborative care minimizes the expected cost of a single service.

\subsection{Main results} \label{sec:main_results}
This section presents and proves (over several steps) our main results on the structural properties of optimal policies.
Theorem \ref{thm:lowcost_collab} focuses on the primary setting in which Assumption \ref{assm:preferred_collab} holds, namely that collaborative care provides higher service quality. The corresponding results are summarized in Table \ref{table:policy}.
In contrast, Theorem \ref{prop:noncollab} characterizes conditions under which an optimal policy avoids collaborative service when the inequality in Assumption \ref{assm:preferred_collab} is reversed.
Finally, Theorem \ref{thm:enough_GP} addresses the special case where $C^G \geq C^p$, capturing staffing scenarios typical in rural areas.

\begin{theorem} \label{thm:lowcost_collab}
    Suppose Assumption \ref{assm:preferred_collab} holds and consider $(i,j,k,\ell) \in \X_D$.
    \begin{enumerate}
        \item \label{state:fast_noncollab}
        If $\mu_1 > \mu_2$, there exists a finite threshold $N(j,k,\ell)$ such that an optimal policy does not seek collaborative care for all $i \geq N(j,k,\ell)$.
        \item Suppose $\mu_1 = \mu_2$. \label{state:equal_rate}
        \begin{enumerate} [ref=\theenumi(\alph*)]
            \item \label{state:equal_rate_no_queue_in_collab} 
            If $\ell < C^G$ (i.e., at least one GP is available), an optimal policy seeks collaborative care;
            \item \label{state:equal_rate_queue_in_collab} If $\ell \geq C^G$ (i.e., all GPs are occupied), a similar result holds as when $\mu_1 > \mu_2$.
        \end{enumerate}
        \item Suppose $\mu_1 < \mu_2$. 
        \begin{enumerate} [ref=\theenumi(\alph*)]
            \item \label{state:fast_collab_no_queue_in_collab} 
            If $\ell < C^G$ (i.e., at least one GP is available), an optimal policy seeks collaborative care;
            \item \label{state:fast_collab_queue_in_collab} If $\ell \geq C^G$ (i.e., all GPs are occupied), $\frac{1}{\mu_1} \leq \frac{\ell +1}{C^G\mu_2}$ and $\frac{h_1}{\mu_1} \leq \frac{\ell + 1}{C^G} \frac{h_2}{\mu_2}$, an optimal policy does not seek collaborative care. 
        \end{enumerate}
    \end{enumerate}
\end{theorem}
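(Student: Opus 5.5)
The plan is to argue each part by a sample-path coupling that compares the two actions at a decision state. We then supplement this with an asymptotic analysis of the value difference $D(i,j,k,\ell)$ from \eqref{def:D} for large $i$. The sign of $D$ determines the optimal action: collaborating is optimal iff $D\ge 0$.

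\textbf{Parts 2(a) and 3(a): a free GP and $\mu_2\ge\mu_1$.}
1. Couple a system $\mathcal{S}_1$ that starts in $(i,j-1,k+1,\ell)$ and follows an optimal policy with a system $\mathcal{S}_2$ that starts in $(i,j-1,k,\ell+1)$.
2. Since $\ell<C^G$, the tagged patient in $\mathcal{S}_2$ starts service immediately, at rate $\mu_2\ge\mu_1$. Use uniformization and a common exponential clock, so that the tagged patient's completion in $\mathcal{S}_2$ occurs no later than in $\mathcal{S}_1$.
3. Let $\mathcal{S}_2$ mimic every other action of $\mathcal{S}_1$. The mimicked policy never requires more GPs than $\mathcal{S}_1$ had, except for the tagged patient.
4. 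Until the tagged patient finishes, the other patients' evolutions are identical in the two systems. The tagged patient's cost in $\mathcal{S}_2$ is $h_2$ per unit time against $h_1$ in $\mathcal{S}_1$, with a shorter sojourn. Taking expectations gives a cost gap of at least $h_1/\mu_1-h_2/\mu_2>0$, by Assumption \ref{assm:preferred_collab}.
5. After the tagged patient leaves, $\mathcal{S}_2$ is either in the same state as $\mathcal{S}_1$ or one service completion ahead, with the NP freed earlier.
6. Close this case with a monotonicity lemma stating that freeing an NP earlier, i.e.\ moving $i\to i-1$ and $j\to j+1$, does not increase cost. I would prove that lemma by the analogous coupling, or by induction on the total number of jobs $i+j+k+\ell$, since this is a clearing system.

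\textbf{Part 3(b): all GPs busy and $\mu_1<\mu_2$.}
1. Couple the two systems again, this time in favour of non-collaboration.
2. In $\mathcal{S}_2$ the tagged patient is in position $\ell+1$ of a queue served at total rate $C^G\mu_2$. Its expected sojourn is therefore at least $(\ell+1)/(C^G\mu_2)$: it waits roughly $(\ell+1-C^G)/(C^G\mu_2)$ and then serves for $1/\mu_2$, and the bound is written conservatively.
3. The condition $1/\mu_1\le(\ell+1)/(C^G\mu_2)$ says the NP's expected occupancy under independent care is no larger than under collaboration. Hence the NP is released no later in expectation.
4. With the same coupling of the remaining events, the tagged patient's cost comparison reduces to $h_1/\mu_1\le \frac{\ell+1}{C^G}\frac{h_2}{\mu_2}$.
5. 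The downstream effect is that $\mathcal{S}_1$ also leaves one fewer job ahead in the GP queue for future collaborators. This only helps $\mathcal{S}_1$, and I would show it with an interchange argument.
6. I expect the main care here to be stochastic ordering rather than expectations. I would realize the tagged patient's waiting in $\mathcal{S}_2$ as a sum of $\ell+2-C^G$ exponential phases and couple the earliest of them with the $\mu_1$ clock. Where pathwise dominance fails, I would fall back to an inductive value-function comparison on the number of jobs.

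\textbf{Parts 1 and 2(b): large $i$.} These are the main obstacle.
1. When $i$ is large, every NP freed early immediately starts a new triage, so the system behaves like one with throughput dominated by NP release rates.
2. Write $v(i,\cdot)$ asymptotically as $\frac{h_0}{2\theta}i^2+\alpha(j,k,\ell)\,i+O(1)$, where $\theta$ is the long-run NP release rate.
3. The difference $D(i,j,k,\ell)$ is then governed by the expected NP-time consumed by the tagged patient, multiplied by the marginal cost $h_0 i/\theta$ of delaying all $i$ waiting patients.
4. Under non-collaboration that expected NP-time is $1/\mu_1$. Under collaboration it is $1/\mu_2$ plus any GP waiting. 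This is strictly larger when $\mu_1>\mu_2$, and also when $\mu_1=\mu_2$ with $\ell\ge C^G$, because the expected wait is then positive.
5. It follows that $D(i,j,k,\ell)\le -c\,i+O(1)$ for some $c>0$, so $D<0$ for all $i\ge N(j,k,\ell)$.

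To make this rigorous I would avoid exact asymptotics. I would lower-bound the cost of collaborating by coupling: after the tagged decision, let the collaborating system follow an optimal policy and the non-collaborating system mimic it. The latter frees its NP earlier by a positive expected amount $\delta$, which yields at least one extra triage start and so saves each of the $\Theta(i)$ queued patients waiting time of order $\delta$. The extra cost of non-collaboration, namely the service-quality loss and the extra GP congestion, remains bounded independently of $i$, since only finitely many jobs are displaced. The hardest part is showing that this saving really scales linearly in $i$ under the coupling, i.e.\ that the time shift propagates to all later patients.
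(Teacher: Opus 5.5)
\textbf{Parts 2(a) and 3(a).} Two steps fail here.

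First, letting $\mathcal{S}_2$ copy every later action of $\mathcal{S}_1$ does not keep the other patients in lockstep. The tagged patient holds a GP in $\mathcal{S}_2$, so once $\mathcal{S}_1$ has sent enough other patients to collaboration to occupy all $C^G$ GPs, one of them is queued in $\mathcal{S}_2$. That delay is charged to $\mathcal{S}_2$, the system whose cost you must bound from above, so it works against $D\ge 0$. The paper's induction on $M=2(i+j)+k+\ell$ (proof of Proposition~\ref{prop:always_collab}) avoids this by not copying. At a triage completion it upper-bounds the collaborating system's continuation by sending the new patient to independent care. If the other system collaborates, the two states then coincide; otherwise the difference is again a $D$ with the same $\ell<C^G$, covered by the induction hypothesis.

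Second, your step 4 charges the tagged patient's entire sojourn in each system. Step 6 must therefore show that the \emph{other} patients are no worse off when an NP returns early; read as a statement about total cost, it would double-count the residual service already charged. That claim is false in general. The early NP triages the next patient sooner, possibly while all GPs are still busy, forcing either a GP-queue wait at rate $h_2$ or costlier independent care. After a late return, that patient would instead have waited upstream at rate $h_0$ and then found a free GP. What holds, and what is needed, is a quantitative bound on the total difference that includes the tagged patient's residual service. This is $v(i,j-1,k+1,\ell)-v(i-1,j,k,\ell)\ge\min\{h_1/\mu_1,h_2/\mu_2\}$ from \eqref{eq:bound_triage_noncollab}, and $v(0,j-1,k+1,\ell)-v(0,j-1,k,\ell)\ge h_1/\mu_1$ from \eqref{eq:one_more_noncollab} when $i=0$. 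In the uniformized step the running-cost difference is $h_1-h_2$, and the tagged patient finishes in $\mathcal{S}_2$ alone at rate $\mu_2-\mu_1$. Then $(h_1-h_2)+(\mu_2-\mu_1)\frac{h_2}{\mu_2}=\mu_1\left(\frac{h_1}{\mu_1}-\frac{h_2}{\mu_2}\right)\ge 0$. A bound of $0$ in place of $h_2/\mu_2$ would not close the argument when $h_1<h_2$, which Assumption~\ref{assm:preferred_collab} permits if $\mu_2>\mu_1$.

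\textbf{Parts 1 and 2(b).} You identify the crux, that the saving grows linearly in $i$, but you do not supply it. The complementary claim, that the extra cost of independent care stays bounded because ``only finitely many jobs are displaced'', is the same propagation problem. Moreover, the copying policy is not well defined once the two systems' triage completions fall out of step.

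The paper never tracks propagation. It proves two global value-function bounds by induction on $M$:
\begin{enumerate}
\item Statement~\ref{state:lemma_diff_larger_mu1_ub} of Lemma~\ref{lemma:diff}: $D\le \frac{h_1}{\mu_1}-\frac{h_2}{\mu_2}$ at every decision state when $\mu_1\ge\mu_2$.
\item \eqref{eq:bound_triage_collab_full}: $v(i,j-1,k,\ell+1)-v(i-1,j,k,\ell)$ is at least $\min\{h_1/\mu_1,h_2/\mu_2\}=h_2/\mu_2$ plus a term linear in $i$ with positive slope, inherited from the queue-increment bound \eqref{eq:one_more_queue_full}.
\end{enumerate}
One uniformized step at rate $d(j-1,k+1,\ell)$ then finishes the proof. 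Every event acting on both systems leaves a difference at most $\frac{h_1}{\mu_1}-\frac{h_2}{\mu_2}$; at triage completions, the independent-care system copies the other system's choice inside the $\min$. The only asymmetric event is the tagged patient finishing in the independent-care system alone, at rate $\mu_1-\mu_2\mathbb{I}\{\ell<C^G\}$. This rate is positive exactly under the hypotheses of Parts 1 and 2(b), and the event contributes $-h_2/\mu_2$ minus the linear term. Hence $D(i,j,k,\ell)\le \frac{h_1}{\mu_1}-\frac{h_2}{\mu_2}-c\,i$ with $c>0$ independent of $i$. That is your $-c\,i+O(1)$, with both halves established.

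Your Part 3(b), by contrast, is essentially the paper's own argument for Proposition~\ref{prop:noncollab}. Both compare $h_1/\mu_1$ with $\frac{\ell+1}{C^G}\frac{h_2}{\mu_2}$ and $1/\mu_1$ with $\frac{\ell+1}{C^G\mu_2}$, and use that independent care involves no queueing. The paper also stays at the level of expectations there, so the stochastic-ordering issue you flag is not resolved in the paper either.
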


\begin{proof}
    The proof is provided step by step using the supporting results outlined in Section \ref{sec:support_results}. In particular, Statements \ref{state:fast_noncollab} and \ref{state:equal_rate_queue_in_collab} follow from Proposition \ref{prop:larger_mu1_threshold}. Statements \ref{state:equal_rate_no_queue_in_collab} and \ref{state:fast_collab_no_queue_in_collab} are implied by Proposition \ref{prop:always_collab}. Furthermore, Statement \ref{state:noncollab_queue_in_collab} in Proposition \ref{prop:noncollab} confirms Statement \ref{state:fast_collab_queue_in_collab}.
\end{proof}

\renewcommand{\arraystretch}{1.3}
\begin{table} [htbp]
\caption{An optimal action at state $(i,j,k,\ell) \in X_D$. All statements refer to Theorem \ref{thm:lowcost_collab}.}
\label{table:policy}
\centering
\begin{tabular}{|c|c|c|}
\hline

\multicolumn{1}{|c|}{} & \multicolumn{1}{c|}{\textbf{$\ell< C^G$}} & \multicolumn{1}{c|}{\textbf{$\ell\geq C^G$}} \\ \hline
\multirow{1}{*}{\textbf{$\mu_1 < \mu_2$}}  & \multicolumn{1}{c|}{\multirow{2}{*}{$a=1$ (Statement \ref{state:fast_collab_no_queue_in_collab})}}                           & \multicolumn{1}{c|}{If $\frac{1}{\mu_1} \leq \frac{\ell+1}{C^G\mu_2}$ and $\frac{h_1}{\mu_1} \leq \frac{\ell+1}{C^G}\frac{h_2}{\mu_2}$, $a=0$ (Statement \ref{state:fast_collab_queue_in_collab})}  \\ 
\cline{1-1} \cline{3-3} 
\textbf{$\mu_1 = \mu_2$}    &               & \multicolumn{1}{c|}{$a=0$  for large $i$ (Statement \ref{state:fast_noncollab})}    \\
\hline 
\textbf{$\mu_1 > \mu_2$}    & \multicolumn{2}{c|}{$a=0$ for large $i$ (Statement \ref{state:fast_noncollab})}      \\
\hline
\end{tabular}
\end{table}

\begin{theorem} \label{thm:lowcost_noncollab}
    Suppose Assumption \ref{assm:preferred_collab} does not hold ($\frac{h_1}{\mu_1} \leq \frac{h_2}{\mu_2}$) and that $\frac{1}{\mu_1} \leq \frac{1}{\mu_2}$.
    There exists an optimal policy that does not call for collaborative care in any $(i,j,k,\ell) \in \X_D$.
\end{theorem}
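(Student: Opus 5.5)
We prove the statement by a sample-path interchange argument that shows $D(i,j,k,\ell)\le 0$ for every $(i,j,k,\ell)\in\X_D$. Fix a decision state and compare two systems on a common probability space. System A sends the tagged patient to Station 1; system B sends it to Station 2. After this decision both systems follow the same policy $\pi$, where $\pi$ is optimal for B. We then build a policy for A, coupled to B, whose expected cost is no larger. Together with the optimality equations \eqref{eq:opt-zero}--\eqref{eq:opt1}, this gives $v(i,j-1,k+1,\ell)\le v(i,j-1,k,\ell+1)$. An induction on the total number of patients $i+j+k+\ell$ (the job-based induction mentioned in Section \ref{sec:lit_review}) then upgrades ``this one decision may be non-collaborative'' to ``there is an optimal policy that never collaborates'': at every decision epoch the non-collaborative action is weakly optimal, so the stationary policy that always takes $a=0$ attains $v$.

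The coupling works as follows. Let the tagged patient's remaining service in A be an $\exp(\mu_1)$ clock and in B an $\exp(\mu_2)$ clock. Since $\mu_1\ge\mu_2$, we couple them so that $S_1 = U/\mu_1 \le U/\mu_2 \le$ the time the tagged patient would finish in B, where $U\sim\exp(1)$. In B the patient may also first wait for a GP, which only adds time. All other patients' clocks are shared.

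The comparison has two parts.
\begin{itemize}
\item \textbf{Tagged patient.} In A the tagged patient incurs $h_1 S_1$. In B it incurs $h_2$ times its sojourn, and that sojourn is at least the pure service time. In expectation, $h_1/\mu_1\le h_2/\mu_2$, so the tagged patient's own cost is no larger in A.
\item \textbf{Everyone else.} The tagged NP is freed no later in A than in B, so the next upstream patient can begin triage no later. Whenever B frees this NP, A lets its (already freed) NP mimic B's action from then on, idling if necessary to stay synchronized. Alternatively, A starts the next patient early; by an interchange, this only shifts holding time earlier and does not increase cost. In A the Station 2 queue also holds one fewer patient, so other collaborating patients wait for GPs no longer than in B.
\end{itemize}

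The cleanest way to formalize the second part is to let A idle the freed NP until B's tagged completion. Then every other patient's trajectory is identical in the two systems, and only the tagged patient's cost differs. The exception is GP contention: in B the tagged patient occupies a GP, which can delay others at Station 2. In A that GP is free, and we let A simply mirror B's Station 2 service order with the tagged slot absent. Under the non-preemptive FCFS discipline with $\min\{\ell,C^G\}$ servers, removing a customer never delays anyone else, which we verify with the shared-clock construction of the Station 2 servers. Since idling is feasible but suboptimal, A's optimal cost is at most this coupled cost.

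The main obstacle is the expectation step for the tagged patient when B includes waiting. The coupling gives pathwise $S_1\le S_2$, but the cost comparison $h_1S_1$ versus $h_2S_2$ holds only in expectation, because $h_1$ may exceed $h_2$. The plan is therefore to condition on all other randomness and use that the tagged patient's service clock is independent of it. The expected tagged cost in A is then exactly $h_1/\mu_1$, and in B it is at least $h_2/\mu_2$. Meanwhile the coupling of everyone else is done through the idling device, which uses only the ordering $S_1\le S_2$, and this ordering is guaranteed by $1/\mu_1\le 1/\mu_2$.
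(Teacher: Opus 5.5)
\paragraph{The gap: idling is not an admissible action.}
Your whole comparison rests on the idling device, and that is where it fails. To keep every non-tagged trajectory identical, you let System~A hold its freed NP idle until B's tagged completion. Because the tagged patient no longer occupies a GP in A, later collaborators can also finish earlier in A, so you must idle their NPs too (or hold a GP idle to ``mirror'' B's Station~2 order). None of this is admissible. The state space $\X$ requires $j+k+\ell=C^p$ whenever $i\ge 1$, so a freed NP starts the next triage at once. Station~2 always works at rate $\min\{\ell,C^G\}\mu_2$, so a GP cannot be kept idle either.

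Your justification, ``idling is feasible but suboptimal,'' is not a structural property of this model, and it is false in general. Take $C^p=2$, $C^G=1$, very large $\mu_0$, $h_0<h_2$ and $h_1/\mu_1>2h_2/\mu_2$. An NP freed while the GP is busy then does strictly better by leaving the next patient upstream (rate $h_0$) until the GP frees. Triaging immediately forces a choice between queueing at Station~2 (rate $h_2$) and the expensive Station~1. Under the theorem's hypotheses idling is presumably useless, but that needs a proof, and you give none.

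Your fallback, ``A starts the next patient early; by an interchange this only shifts holding time earlier,'' is exactly the step that needs an argument. An earlier triage start moves all of that NP's later decision epochs, so A is no longer in B's states when it is supposed to copy B's actions. A patient who reaches Station~2 earlier in A can also overtake others in the FCFS order and delay them. There is no pathwise domination to appeal to.

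\paragraph{How the paper closes it.}
The paper does not use a sample path here. Via Proposition~\ref{prop:noncollab}, the theorem reduces to Statement~\ref{state:lemma_diff_larger_mu1_ub} of Lemma~\ref{lemma:diff}. That statement says $D(i,j,k,\ell)\le h_1/\mu_1-h_2/\mu_2$ in every decision state whenever $\mu_1\ge\mu_2$; together with $h_1/\mu_1\le h_2/\mu_2$ this gives $D\le 0$ everywhere.

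It is proved by induction on $M=2(i+j)+k+\ell$. Unlike your $i+j+k+\ell$, this quantity drops at every transition, including triage completions. Both systems are uniformized at rate $d(j-1,k+1,\ell)$ and common events are matched. The surplus rate $\mu_1-\mu_2\mathbb{I}\{\ell<C^G\}$ is a fictitious event in which only A's tagged patient leaves. For $i\ge 1$, A then jumps to $(i-1,j,k,\ell)$: its NP immediately starts the next triage, with no idling. B stays at $(i,j-1,k,\ell+1)$. The inequality $v(i-1,j,k,\ell)-v(i,j-1,k,\ell+1)\le -h_2/\mu_2$, from Statement~\ref{state:lemma_diff_bound_move_one} of Lemma~\ref{lemma:diff} and the inductive hypothesis, is precisely the quantitative substitute for your idling step. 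Once $D\le 0$ holds everywhere, the optimality equations give an optimal policy with $a=0$ in every decision state.

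\paragraph{How to rescue your coupling.}
Run it in an enlarged MDP that allows NP idling, which gives the analogue of $D\le 0$ there. Then prove separately that, once nobody collaborates, idling cannot lower cost. Only that last step identifies the enlarged value with $v$, and it is missing from your proposal.
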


\begin{proof}
    This result follows directly from Statement \ref{state:noncollab_no_queue_in_collab} in Proposition \ref{prop:noncollab}.
\end{proof}

Theorems \ref{thm:lowcost_collab}-\ref{thm:lowcost_noncollab} extend the corresponding results from \citep{lu2026controlpoliciestwostagequeueing} where there was only one GP and two NPs. These generalizations reveal intuitive structural insights and form the foundation for our heuristic design in the next section.

\begin{itemize}
    \item When $\mu_1 >\mu_2$, having the NP conduct the downstream service independently enables a quicker return to triage, thereby reducing the upstream cost incurred by patients waiting in the queue. As the queue length increases, this upstream cost saving dominates the (potentially) higher service cost of non-collaboration (if Assumption \ref{assm:preferred_collab} holds). Moreover, non-collaborative care avoids downstream blocking effects by providing immediate service, making it the preferred option when the initial (upstream) queue is large. This observation aligns with Statement \ref{state:fast_noncollab} in Theorem \ref{thm:lowcost_collab}, indicating that a longer initial queue discourages seeking collaborative care.
    \begin{itemize}
        \item If Assumption \ref{assm:preferred_collab} does not hold, non-collaborative care is not only faster but also incurs a lower or equal service cost. Combined with the absence of downstream blocking, this makes non-collaborative care optimal in all decision-requiring states, as stated in Theorem \ref{thm:lowcost_noncollab}.
    \end{itemize}
    \item When $\mu_1 < \mu_2$, and Assumption \ref{assm:preferred_collab} holds, collaborative care is both faster and less costly. The presence or absence of downstream queueing then becomes the key determinant.
    \begin{itemize}
        \item If $\ell < C^G$, collaborative care can begin immediately. In this case, it is always optimal to seek collaboration, thereby avoiding idle GPs, as shown in Statement \ref{state:fast_collab_no_queue_in_collab} in Theorem \ref{thm:lowcost_collab}.
        \item If $\ell \geq C^G$, all GPs are currently busy, and seeking collaborative care requires the NP to wait with the patient until a GP becomes available. The expected time (resp. cost) to complete collaborative care now includes this waiting period and equals $\frac{\ell+1}{C^G \mu_2}$ (resp. $\frac{\ell+1}{C^G}\frac{h_2}{\mu_2}$). Detailed derivations are provided in the proof of Proposition~\ref{prop:noncollab} in Appendix~\ref{sec:proof_support}. When both 
        \begin{align}
            &\frac{1}{\mu_1} \leq \frac{\ell+1}{C^G\mu_2}, &\frac{h_1}{\mu_1} \leq \frac{\ell+1}{C^G}\frac{h_2}{\mu_2} \label{eq:time_and_cost}
        \end{align}
        hold, the NP’s independent service yields lower or equal total time ($\frac{1}{\mu_1}$) and cost ($\frac{h_1}{\mu_1}$). Since non-collaborative service is also free of blocking concerns, it becomes an optimal decision under these conditions. This behavior is captured in Statement \ref{state:fast_collab_queue_in_collab} in Theorem \ref{thm:lowcost_collab}.
    \end{itemize}
    \item When $\mu_1 = \mu_2$, the two options are equal in service speed, and the analysis reduces to the previous cases. Specifically, the subcases $\ell < C^G$ and $\ell \geq C^G$ correspond to those discussed under $\mu_1 < \mu_2$ with $\ell < C^G$ and $\mu_1 > \mu_2$, respectively; See Statement \ref{state:equal_rate} in Theorem \ref{thm:lowcost_collab}.
\end{itemize}

Notice Statement \ref{state:fast_noncollab} in Theorem \ref{thm:lowcost_collab} suggests a natural conjecture regarding policy monotonicity in the upstream queue length. Specifically, one might expect the optimal policy switches from collaborative to non-collaborative care as the upstream queue length increases. While this intuition aligns with empirical observations under various parameter settings, establishing the existence of such a threshold-type policy remains elusive. 
In particular, a conventional approach of proving the difference function $D(i,j,k,\ell)$ in \eqref{def:D} is non-increasing in $i$ fails in general. 
For example, when $C^p = 2, C^G = 1, \mu_0 = 5, \mu_1 = 3.1, \mu_2 = 3, h_0 = 0.1, h_1 = 22, h_2 = 10$, we have $D(2,1,1,0) = 2.5714$ and $D(3,1,1,0) = 2.5716$, which violates the expected non-increasing behavior in $i$.

When $\mu_1 < \mu_2$ and $\ell \geq C^G$, reversing either inequality in \eqref{eq:time_and_cost} leads to markedly more intricate dynamics and induces subtle but consequential structural changes in the optimal policy. This contrasts with the simpler, typically monotone behavior observed when $\mu_1 \geq \mu_2$. To illustrate these complexities, we examine how different combinations of inequality directions in \eqref{eq:time_and_cost} influence the optimal action at $(i, 2, 0, 2)$, where we fix $C^p = 4$, $C^G = 2$, $\mu_1 = 10$, $h_1 = 1$, and vary others.

\begin{itemize}
    \item Consider $\frac{1}{\mu_1} \leq \frac{\ell+1}{C^G \mu_2}$ and $\frac{h_1}{\mu_1} > \frac{\ell+1}{C^G} \frac{h_2}{\mu_2}$. When $\mu_0 = 1$, $\mu_2 = 12$, $h_0 = 0.5$, $h_2 = 0.6667$, the optimal policy is indeed monotone: $a = \mathbb{I}\{i < 4\}$. However, increasing the triage service rate to $\mu_0 = 100$ leads to a policy that always opts for non-collaborative care, i.e., $a = 0$.
    \item Suppose $\frac{1}{\mu_1} > \frac{\ell+1}{C^G\mu_2}$ and $\frac{h_1}{\mu_1} \leq \frac{\ell+1}{C^G}\frac{h_2}{\mu_2}$. For parameters $\mu_0 = 1$, $\mu_2 = 18$, $h_0 = 0.5$, $h_2 = 1.5$, the policy switches to collaborative care when the initial upstream queue is high: $a = \mathbb{I}\{i > 13\}$. Again, when $\mu_0$ increases to $100$, the policy collapses to $a = 0$.
    \item If $\frac{1}{\mu_1} > \frac{\ell+1}{C^G\mu_2}$ and $\frac{h_1}{\mu_1} > \frac{\ell+1}{C^G}\frac{h_2}{\mu_2}$, for $\mu_0 = 8$, $\mu_2 = 18$, $h_0 = 5$, $h_2 = 0.75$, the optimal policy is non-monotonic: $a = \mathbb{I}\{i < 1 \text{ or } i > 7\}$. Varying $\mu_0$ again shows the policy is sensitive to the triage rate. When $\mu_0 = 0.8$, the system always chooses collaborative care ($a = 1$); when $\mu_0 = 9$, the action becomes $a = \mathbb{I}\{a<1\}$; and when $\mu_0 = 100$, the policy reverts to $a = 0$.
\end{itemize}
\noindent
These examples reveal several key insights. First, unlike the case $\mu_1 \geq \mu_2$, the optimal control cannot be fully described by a monotone threshold policy. In some parameter settings, the decision switches more than once as $i$ increases. Nonetheless, numerical evidence suggests that the policy eventually stabilizes for sufficiently large $i$, becoming either always collaborative or always non-collaborative beyond a threshold. 
Second, the stabilized decision under depends not only on the relative service rates $\mu_1$ and $\mu_2$, but also crucially on the triage rate $\mu_0$. The interplay between $\mu_0$, $\mu_1$, and $\mu_2$ determines whether blocking effects accumulate and whether time savings at the downstream phase translate into meaningful upstream cost reductions.
To elaborate:
\begin{enumerate}
    \item \label{state:lowtime_concollab}
    If $\frac{1}{\mu_1} \leq \frac{\ell+1}{C^G\mu_2}$, non-collaborative service completes the current patient in less or equal time, allowing the NP to return to triage sooner and thus reducing the cost associated with upstream queueing. Since non-collaborative service is not subject to downstream blocking, the resulting upstream cost savings can outweigh the higher per-patient service cost ($\frac{h_1}{\mu_1} > \frac{\ell+1}{C^G} \frac{h_2}{\mu_2}$) when the initial queue is large, thus shifting the optimal decision toward non-collaborative care.
    \item \label{state:lowtime_collab}
    If $\frac{1}{\mu_1} > \frac{\ell+1}{C^G\mu_2}$, collaborative care completes the current patient more quickly but introduces blocking complications, which may compound over time.
    However, when $\mu_0$ is small relative to both $\mu_1$ and $\mu_2$, downstream services are often completed before the upstream service finishes, and thus before any subsequent decision is required.
    In this case, the blocking effect does not propagate beyond the NP-patient pairs that are initially queueing for collaboration. As a result, the upstream cost savings from faster patient completion become dominant, especially under heavy initial triage queues, making collaborative care the preferred option.
\end{enumerate}
Statements \ref{state:lowtime_concollab}-\ref{state:lowtime_collab} are proved in \citep{lu2026controlpoliciestwostagequeueing} for the special case with $C^p = 2$ and $C^G = 1$. 
We conclude this section with Theorem \ref{thm:enough_GP}, which complements Theorems \ref{thm:lowcost_collab}-\ref{thm:lowcost_noncollab} by assuming that $C^G \geq C^p$. Under this condition, GP capacity is sufficient to eliminate downstream blocking.

\begin{theorem} \label{thm:enough_GP}
    Suppose $C^G \geq C^p$ and consider any state $(i,j,k,\ell) \in \X_D$.
    \begin{enumerate}
        \item If $\frac{h_1}{\mu_1} > \frac{h_2}{\mu_2}$,
        \begin{enumerate}
            \item if $\mu_1 \leq \mu_2$, the optimal policy always seeks collaborative care;
            \item if $\mu_1 > \mu_2$, there exists some finite $N(j,k,\ell)$ such that the optimal policy does not seek collaborative care for all $i \geq N(j,k,\ell)$;
        \end{enumerate}
        \item If $\frac{h_1}{\mu_1} \leq \frac{h_2}{\mu_2}$,
        \begin{enumerate}
            \item if $\mu_1 \geq \mu_2$, the optimal policy never seeks collaborative care;
            \item if $\mu_1 < \mu_2$, there exists some finite $N(j,k,\ell)$ such that the optimal policy does seeks collaborative care all $i \geq N(j,k,\ell)$.
        \end{enumerate}
    \end{enumerate}
\end{theorem}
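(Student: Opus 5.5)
When $C^G \geq C^p$ we have $\ell \le C^p \le C^G$ in every state, so $\min\{\ell,C^G\}=\ell$ and no downstream queue can ever form. The model then behaves like $C^p$ independent NP ``lanes'' fed by a common triage queue. My plan is to exploit this decoupling: the decision for the patient just finishing triage affects the others only through the time at which that NP returns to triage, which is a random downstream holding time.

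First I would fix a decision state and use a coupling (sample-path interchange) argument. Compare two policies that differ only in the current decision and then follow the same optimal continuation, with the remaining clocks coupled. Under action $a=0$ the NP is busy for $S_1\sim\mathrm{Exp}(\mu_1)$ and incurs expected cost $h_1/\mu_1$; under $a=1$ it is busy for $S_2\sim\mathrm{Exp}(\mu_2)$ and incurs $h_2/\mu_2$, with no waiting.

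Case 1(a) and case 2(a) follow directly from this coupling. In 1(a), $h_1/\mu_1>h_2/\mu_2$ and $\mu_1\le\mu_2$. Couple $S_2=(\mu_1/\mu_2)S_1\le S_1$. The NP that collaborates returns to triage no later, so it can imitate the other path: it idles (or, equivalently, lets the triage queue evolve identically) until time $S_1$ and then follows the same actions. That path has upstream cost no higher and service cost strictly lower. A non-idling optimal policy is at least as good as this imitating path, so $D(i,j,k,\ell)\ge 0$ and collaboration is optimal. Case 2(a) is the symmetric argument with the roles of $a=0$ and $a=1$ swapped.

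For the threshold cases 1(b) and 2(b), the two options trade a service-cost difference $\Delta_c=h_1/\mu_1-h_2/\mu_2$ against a difference in when the NP returns. I would write
\[
D(i,j,k,\ell)=\Delta_c+\big(\text{upstream cost difference due to the NP returning after }S_1\text{ versus }S_2\big).
\]
The key step is to show that the second term grows without bound in $i$, with the sign of $\mu_2-\mu_1$. The intuition is that with $i$ large, the triage queue cannot empty before a horizon that grows with $i$. Over that horizon the faster option (rate $\max\{\mu_1,\mu_2\}$) makes the NP available sooner. Each lane processes patients in i.i.d. cycles, so the expected delay imposed on the remaining $\Theta(i)$ waiting patients is roughly $h_0\,|1/\mu_1-1/\mu_2|$ times the number of patients who still wait behind that NP. That number tends to infinity with $i$.

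To make this rigorous I would couple the two systems so that the faster NP gains a head start of $|S_1-S_2|$ and mimics the slower system thereafter. Every one of the $i$ waiting patients then starts triage no later, and a positive fraction of them start strictly earlier. This gives a lower bound of the form $c\,h_0\, i/C^p$ on the upstream saving, with $c>0$ independent of $i$, and therefore a finite $N(j,k,\ell)$.

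The hardest step is this linear-in-$i$ lower bound. An imitation coupling only yields a saving that is nonnegative; obtaining a strictly growing saving requires showing that the head start propagates to many later patients rather than being absorbed. I would first try bounding the saving below by the cost of the single ``lost'' triage service slot multiplied by the queue length: delaying one NP by $\delta$ shifts by about $\delta$ the start times of the patients it would have served, and it serves about $i/C^p$ of them. If that fails, I would instead use an induction on $i$ via \eqref{eq:opt1}, showing that $D(i+1,\cdot)-D(i,\cdot)$ is bounded away from zero in the appropriate direction.
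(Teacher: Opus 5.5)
Your treatment of cases 1(a) and 2(a) is a legitimate alternative to the paper's route. Like you, the paper notes that $\ell\le C^p-1<C^G$ in every decision state. It then reuses Proposition~\ref{prop:always_collab} (an induction on $M=2(i+j)+k+\ell$ through the uniformized optimality equations) and Proposition~\ref{prop:larger_mu1_threshold}, and obtains case 2 from the symmetry between Stations 1 and 2 once blocking is impossible. Your proportional coupling $S_2=(\mu_1/\mu_2)S_1$, with the collaborating NP idling until $S_1$, is more transparent and even gives $D(i,j,k,\ell)\ge \frac{h_1}{\mu_1}-\frac{h_2}{\mu_2}$ rather than just $D\ge 0$. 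However, it rests on a step you assert without proof. Idling is not an available action, so you need the optimal non-idling value $v(i,j-1,k,\ell+1)$ to be no larger than the cost of your idling path; that is, letting an NP resume triage earlier must never hurt. This is true here, since there is no blocking when $C^G\ge C^p$, but it needs its own interchange or inductive argument. In the paper this role is played by Statement~\ref{state:lemma_diff_bound_move_one} of Lemma~\ref{lemma:diff}.

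The genuine gap is in cases 1(b) and 2(b), which carry the real content of the theorem. You correctly identify the linear-in-$i$ bound as the crux, but you do not supply it. The imitation coupling only yields $D\le \frac{h_1}{\mu_1}-\frac{h_2}{\mu_2}$ when $\mu_1\ge\mu_2$, which is exactly \eqref{eq:larger_mu1_ub}, with no dependence on $i$. The ``shift NP$^*$'s schedule by $\delta$'' heuristic is not yet an argument under a state-dependent optimal continuation: you would have to copy actions across the two systems, handle the queue emptying at shifted times, and lower-bound the number of patients NP$^*$ triages linearly in $i$. The fallback of bounding $D(i+1,\cdot)-D(i,\cdot)$ away from zero asks for more than is needed, and the paper warns that $D$ need not even be monotone in $i$ (albeit in an example with $C^G<C^p$). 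The missing idea is a lower bound, growing linearly in the queue length, on the marginal cost of one extra waiting patient. This is \eqref{eq:one_more_queue_full}, proved by induction on $M$, together with its consequence \eqref{eq:bound_triage_collab_full}. With it, uniformize both systems at the faster rate $d(j-1,k+1,\ell)$. In the fictitious event of rate $\mu_1-\mu_2$, the fast NP has returned and pulled a patient while the slow one has not, and that term contributes
\[
v(i-1,j,k,\ell)-v(i,j-1,k,\ell+1)\le -\frac{h_2}{\mu_2}-\frac{\mu_0}{d(j,k,\ell+1)}\Big(\frac{(i+j)h_0}{C^p\mu_0}+\frac{i h_0}{C^p\mu_1}\Big).
\]
The immediate-cost term and the remaining transition terms are controlled by \eqref{eq:larger_mu1_ub}. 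This gives $D(i,j,k,\ell)\le \big(\frac{h_1}{\mu_1}-\frac{h_2}{\mu_2}\big)-C_1(j,k,\ell)\,i$, where $C_1$ is independent of $i$ and positive because $\mu_1>\mu_2$, hence a finite threshold. This is your head-start effect made rigorous in a single transition, with no pathwise shifting. Case 2(b) then follows by the station-swap symmetry you already invoke.
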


\begin{proof}
    This conclusion is an immediate consequence of Proposition \ref{cor:enough_GP}.
\end{proof}

\begin{table} [htbp]
\caption{Action at state $(i,j,k,\ell) \in X_D$ under an optimal policy if $C^G \geq C^p$.}\label{table:enough_GP_full}
\centering
\begin{tabular}{|c|c|c|}
\hline

\multicolumn{1}{|l|}{} & \multicolumn{1}{c|}{\textbf{$\frac{h_1}{\mu_1} \geq \frac{h_2}{\mu_2}$}} & \multicolumn{1}{c|}{\textbf{$\frac{h_1}{\mu_1} \leq \frac{h_2}{\mu_2}$}} \\ \hline
\textbf{$\mu_1 < \mu_2$}    & $a=1$               & $a=1$ for large $i$ \\
\textbf{$\mu_1 = \mu_2$}    & $a=1$               & $a=0$       \\
\textbf{$\mu_1 > \mu_2$}    & $a=0$ for large $i$ & $a=0$       \\
\hline
\end{tabular}
\end{table}

\subsection{Supporting results} \label{sec:support_results}
This subsection analyzes the sign behavior of the difference $D(i,j,k,\ell)$ defined in \eqref{def:D}, which reflects decision preferences.
The proofs of these supporting results are presented in Appendix \ref{sec:proof_support}, with preliminary lemmas and their proofs detailed in Appendix \ref{sec:proof_prelim}.

\begin{proposition} \label{prop:larger_mu1_threshold}
    Suppose $\mu_1 \geq \mu_2$. Consider any decision state $(i,j,k,\ell) \in \X_D$. 
    For $\ell \geq C^G$, there exists a finite threshold $N(j,k,\ell)$ such that for all $i \geq N(j,k,\ell)$,
    \begin{align}
        D(i,j,k,\ell) = v(i,j-1,k+1,\ell) - v(i,j-1,k,\ell+1) \leq 0. \label{eq:larger_mu1_threshold}
    \end{align}
    If we further assume $\mu_1 >\mu_2$ strictly, then a similar result holds for $\ell <C^G$ as well.
\end{proposition}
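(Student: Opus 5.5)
We compare the two post-decision states $x^{(1)} = (i,j-1,k+1,\ell)$ and $x^{(2)} = (i,j-1,k,\ell+1)$ by a coupling argument combined with an asymptotic estimate in $i$. The idea is that $v(i,\cdot)$ grows roughly like $h_0 i^2/(2\,\text{throughput})$ in the upstream queue length. Any configuration that returns an NP to triage sooner therefore saves an amount that grows linearly in $i$. The difference in downstream service costs, by contrast, stays bounded uniformly in $i$.

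\textbf{Coupling.} Starting from $x^{(2)}$, let $\pi^*$ be an optimal policy. Starting from $x^{(1)}$, we use a policy $\tilde\pi$ that mimics $\pi^*$ on every patient except the ``tagged'' one. The tagged patient is in non-collaborative service under $x^{(1)}$ and in the collaborative station under $x^{(2)}$. We couple the clocks as follows.
\begin{itemize}
\item When $\ell \geq C^G$, the tagged patient in $x^{(2)}$ must first wait for a GP. Its completion time is the sum of a waiting time (stochastically at least the time for $\ell-C^G+1$ GP completions) and an $\exp(\mu_2)$ service. Since $\mu_1 \geq \mu_2$, we can couple an $\exp(\mu_1)$ variable to be no larger than this total, and strictly smaller with positive probability bounded below. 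In addition, the tagged patient does not occupy a GP in $x^{(1)}$, so subsequent collaborative patients under $\tilde\pi$ are served no later.
\item When $\ell < C^G$ and $\mu_1 > \mu_2$, the waiting component is absent. We couple $\exp(\mu_1) \le \exp(\mu_2)$ pathwise, with the two differing by a positive expected amount $\frac{1}{\mu_2}-\frac{1}{\mu_1}$.
\end{itemize}
In either case, the tagged NP in $x^{(1)}$ frees up earlier, at time $\tau_1 \le \tau_2$. Between $\tau_1$ and $\tau_2$ it can take one extra triage patient; thereafter we let $\tilde\pi$ imitate $\pi^*$ with a shifted queue. A cleaner route is the standard interchange: once the tagged NP in $x^{(2)}$ finishes, the two systems differ by one patient ahead in triage. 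We then let the leading system idle one NP, or keep the extra progress, and show the trajectories merge. This yields $v(x^{(1)}) \le v(x^{(2)}) + \big(\tfrac{h_1}{\mu_1}-\mathbb{E}[\text{tagged cost in }x^{(2)}]\big) - \Delta(i)$. Here $\Delta(i)$ is the upstream holding cost saved, namely $h_0$ times the expected number of upstream patients present during $[\tau_1,\tau_2]$ who are advanced.

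\textbf{Bounding both terms.} The service-cost term is bounded by the constant $\frac{h_1}{\mu_1}$, independent of $i$. For $\Delta(i)$ we need a lower bound of the form $\Delta(i) \ge c\, h_0\, \mathbb{E}[(\tau_2-\tau_1)^+] \cdot \min(i,\cdot)$. The reason is that every one of the $i$ waiting patients is delayed by one service slot in the lagging system. Getting this cleanly means showing that the advance of one NP propagates to a reduction of the completion time of the last upstream patient. Equivalently, it means showing that $v(i+1,\cdot) - v(i,\cdot)$ grows at least linearly in $i$, which one can prove by induction on $i$ using \eqref{eq:opt1} together with monotonicity lemmas in the appendix ($v$ nondecreasing in each coordinate, and moving a patient to a faster station reduces cost). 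Given linear growth, choose $N(j,k,\ell)$ so that $c\,h_0\,\mathbb{E}[(\tau_2-\tau_1)^+]\, N > \frac{h_1}{\mu_1}$; then $D(i,j,k,\ell)\le 0$ for $i\ge N$.

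\textbf{Main obstacle.} The hard step is converting the earlier NP release into a cost saving that grows with $i$. The optimal policy's later decisions could route patients into collaboration and re-create blocking, so the coupled systems might not merge. I would handle this by letting $\tilde\pi$ copy $\pi^*$'s actions patient-by-patient (indexed by triage order), and then arguing by induction on the number of remaining patients that each patient's departure under $\tilde\pi$ is no later. The condition $\mu_1\ge\mu_2$ is exactly what makes the per-patient domination hold. When $\mu_1=\mu_2$ and $\ell<C^G$ the strict gain vanishes, so the restriction in the statement is expected.
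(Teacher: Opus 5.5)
Your picture of upstream savings growing linearly in $i$ against a bounded downstream penalty is the right intuition. However, the step that turns it into a bound on $D$ is never established, and the coupling you sketch would not deliver it as stated. The key inequality $v(x^{(1)})\le v(x^{(2)})+(\frac{h_1}{\mu_1}-\mathbb{E}[\cdot])-\Delta(i)$ is asserted, not derived, and each of the steps meant to support it fails.
\begin{itemize}
\item Copying $\pi^*$ patient-by-patient is anticipative. The tagged NP returns earlier in System~1, so a given patient finishes triage earlier there than in System~2. The action $\pi^*$ takes for that patient therefore depends on System~2's state at a later time, i.e., on randomness that is still in the future for System~1. The cost of such a policy does not upper-bound $v(x^{(1)})$.
\item The fallback of idling an NP is not in the action set $\A$. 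Moreover, idling the tagged NP until $\tau_2$ gives back exactly the saving you want.
\item Even granting that every departure under $\tilde\pi$ is no later, the downstream term is not thereby bounded by $\frac{h_1}{\mu_1}$. Later patients routed to collaboration reach the GP queue earlier in System~1, so they can trade time at Station~0 (cost rate $h_0$) for time at Station~2 (cost rate $h_2$). Nothing in your argument stops this from accumulating over the $O(i)$ later patients.
\item The bound $\Delta(i)\ge c\,h_0\,\mathbb{E}[(\tau_2-\tau_1)^+]\min(i,\cdot)$ rests only on ``every waiting patient is delayed by one slot.'' That is the deterministic picture the paper later uses to motivate $H_0$; it is not shown to hold under an arbitrary optimal routing policy.
\end{itemize}

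What is missing is a bound, uniform in $i$, on how the rest of the system can react to the initial choice. The paper gets this from Lemma~\ref{lemma:diff}, Statement~\ref{state:lemma_diff_larger_mu1_ub}: when $\mu_1\ge\mu_2$, $D\le\frac{h_1}{\mu_1}-\frac{h_2}{\mu_2}$ in every decision state, proved by induction on the number of remaining services. With that in hand, no trajectory coupling is needed. Uniformize both post-decision states at the larger rate $d(j-1,k+1,\ell)$; every transition pairs up except one of rate $\mu_1-\mu_2\mathbb{I}\{\ell<C^G\}$. The paired transitions give $D$-type differences at neighboring states, including the difference of minima from triage completions, and each is at most the constant by \eqref{eq:larger_mu1_ub}. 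The unpaired one gives $v(i-1,j,k,\ell)-v(i,j-1,k,\ell+1)$. By \eqref{eq:bound_triage_collab_full}, this is at most $-\frac{h_2}{\mu_2}$ minus a term linear in $i$ (under Assumption~\ref{assm:preferred_collab}, which is without loss of generality here). Note that this is a mixed-configuration difference: the NP is back in triage in one state, and still at Station~2 with one more patient queued in the other. It is not the pure marginal $v(i+1,\cdot)-v(i,\cdot)$ you propose to bound. Summing gives $D(i,j,k,\ell)\le\frac{h_1}{\mu_1}-\frac{h_2}{\mu_2}-C_1(j,k,\ell)\,i$, with $C_1>0$ exactly when $\mu_1-\mu_2\mathbb{I}\{\ell<C^G\}>0$. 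That condition is also where the case split between $\ell\ge C^G$ and $\ell<C^G$ in the statement comes from.
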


\begin{proposition} \label{prop:always_collab}
    Suppose $\frac{h_1}{\mu_1} \geq \frac{h_2}{\mu_2}$ and $\mu_2 \geq \mu_1$.
    Consider any decision state $(i,j,k,\ell) \in \X_D$ such that $\ell < C^G$. For all $i$, we have 
    \begin{equation} 
        D(i,j,k,\ell) = v(i,j-1,k+1,\ell) - v(i,j-1,k,\ell+1) \geq 0. \label{eq:always_collab}
    \end{equation}
    Moreover, if Assumption \ref{assm:preferred_collab} holds, i.e.,  $\frac{h_1}{\mu_1} > \frac{h_2}{\mu_2}$, then inequality \eqref{eq:always_collab} holds strictly. 
\end{proposition}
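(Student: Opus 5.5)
We prove Proposition \ref{prop:always_collab} with a coupling (sample-path) argument. The aim is to show directly that $v(i,j-1,k,\ell+1) \le v(i,j-1,k+1,\ell)$ whenever $\ell < C^G$.

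Let $s_1 = (i,j-1,k+1,\ell)$ and $s_2 = (i,j-1,k,\ell+1)$. In $s_1$ one extra NP–patient pair sits at Station 1. In $s_2$ that pair sits at Station 2 instead, and it is in service immediately because $\ell+1 \le C^G$. Start system 1 from $s_1$ under an optimal policy $\pi_1$. System 2 starts from $s_2$ and runs a policy $\pi_2$ that imitates $\pi_1$. All common servers share the same exponential clocks. The one distinguished pair is coupled as follows:
\begin{itemize}
\item its Station 2 clock in system 2 has rate $\mu_2$;
\item its Station 1 clock in system 1 has rate $\mu_1 \le \mu_2$;
\item we couple them so that the collaborative completion happens no later than the non-collaborative one, taking $T_2 = (\mu_1/\mu_2) T_1$, or by uniformization with thinning.
\end{itemize}

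The delicate point is that $\pi_2$ must not cause extra blocking. Because the distinguished patient occupies a GP in system 2, a later collaborative request could find all GPs busy in system 2 but not in system 1. I would handle this with an exchange of labels. Whenever a GP in system 2 would be needed by a patient that $\pi_1$ routes to Station 2, while the distinguished GP is still busy, we let that patient take over the distinguished patient's clock. This is legitimate by memorylessness and because the rates are equal at Station 2. We then let the distinguished pair continue with residual service that is stochastically no longer than in system 1.

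A cleaner alternative uses induction on the total number of patients $i+j+k+\ell$ through the optimality equations \eqref{eq:opt-zero}–\eqref{eq:opt1}. Uniformize with rate $\Lambda = C^p\mu_0 + C^p\mu_1 + C^G\mu_2$. Then write $D(i,j,k,\ell)$ as
\[
\frac{h_1 - h_2}{\Lambda} + \frac{\mu_1}{\Lambda}\bigl(\text{completion terms}\bigr) - \frac{\mu_2}{\Lambda}\bigl(\text{completion terms}\bigr) + \sum \frac{\text{rate}}{\Lambda}\, D(\text{successor}).
\]
The idea is to split $\mu_2 = \mu_1 + (\mu_2-\mu_1)$. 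The common $\mu_1$ part produces the same successor state $(i-1,j+1,k,\ell)$ in both systems, or $(0,j,k,\ell)$ if $i=0$, so those terms cancel. The extra $(\mu_2-\mu_1)$ part lets system 2 advance to that completed state while system 1 stays put. We then need to use
\[
v(\cdot,k+1,\ell) - v(\text{completed}) \ge \frac{h_1}{\mu_1}\cdot(\dots).
\]
In the induction I would therefore carry a stronger hypothesis, namely that $D \ge 0$ together with a bound on how much an extra Station 1 job costs. Roughly, this says that $v(i,j-1,k+1,\ell) - v(i-1,j,k,\ell)$ is at least the cost of waiting for it, which is where $h_1/\mu_1 \ge h_2/\mu_2$ enters.

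The remaining step is to check that every successor state in the recursion still satisfies $\ell' < C^G$ for the relevant comparison, or else is dominated. Events that push $\ell$ up to $C^G$ are exactly the blocking issue above. For these I would use the fact that system 2 can follow system 1's decision and keep the difference at least its value at the coupled state. Strictness under Assumption \ref{assm:preferred_collab} comes from the immediate-cost term $\frac{h_1}{\mu_1} - \frac{h_2}{\mu_2} > 0$, which is already visible at the base state $D(0,1,0,0)$ and propagates through the inequality chain.

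The main obstacle is controlling these GP-contention events in which the distinguished collaborative patient occupies a GP that a later patient would want. I would try the label-swap coupling first, before falling back to a strengthened induction hypothesis.
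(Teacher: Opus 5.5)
Your second route is essentially the paper's proof: induction through the optimality equations, splitting $\mu_2=\mu_1+(\mu_2-\mu_1)$, and a lower bound for the extra Station-1 job. The paper thins locally at $d(j-1,k,\ell+1)$ rather than at a global $\Lambda$, which is equivalent. However, the step you flag as the main obstacle is left open, and the fix you sketch for it fails. The triage-completion term of the recursion is
\[
\min\{v(i,j-2,k+2,\ell),v(i,j-2,k+1,\ell+1)\}-\min\{v(i,j-2,k+1,\ell+1),v(i,j-2,k,\ell+2)\}.
\]
Suppose system 2 ``follows system 1's decision.'' Whenever system 1 collaborates (which it optimally does, since induction gives $D(i,j-1,k+1,\ell)\ge 0$), you are left with $v(i,j-2,k+1,\ell+1)-v(i,j-2,k,\ell+2)=D(i,j-1,k,\ell+1)$. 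For $\ell=C^G-1$ this lies in the blocked regime, where you have no inductive hypothesis and the sign genuinely flips. A direct computation gives $D(0,1,0,C^G)=\frac{h_1}{\mu_1}-\frac{C^G+1}{C^G}\frac{h_2}{\mu_2}$. This is negative whenever $\frac{h_2}{\mu_2}<\frac{h_1}{\mu_1}<\frac{C^G+1}{C^G}\frac{h_2}{\mu_2}$, which is compatible with your hypotheses, and it is exactly what you meet when proving $D(0,2,0,C^G-1)\ge 0$.

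The missing idea is to bound the second minimum by the option the two systems share, $v(i,j-2,k+1,\ell+1)$. The term is then at least $\min\{D(i,j-1,k+1,\ell),0\}\ge 0$, which uses the hypothesis only at the same $\ell<C^G$, so blocking never enters. In coupling terms: when system 1 sends the newly triaged patient to Station 2, system 2 sends it to Station 1, and the two states coincide. The clock transfer you describe would amount to taking the distinguished pair off its GP mid-service, which the non-preemptive model does not permit.

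Two further repairs are needed. First, induction on $i+j+k+\ell$ does not close. A triage completion leaves the number of patients unchanged, yet the term above calls for $D(i,j-1,k+1,\ell)$ at the same count. Induct instead on the number of remaining services $2(i+j)+k+\ell$, which drops by one at every event.

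Second, the auxiliary bound must be stated precisely and proved, not carried `roughly.' The common successor is $(i-1,j,k,\ell)$ (or $(0,j-1,k,\ell)$ when $i=0$), not $(i-1,j+1,k,\ell)$. The recursion then gives $d(j-1,k,\ell+1)\,D(i,j,k,\ell)\ge h_1-h_2+(\mu_2-\mu_1)X$, where $X$ lower-bounds the extra-job difference.
\begin{itemize}
\item For $i=0$ you need $v(0,j-1,k+1,\ell)-v(0,j-1,k,\ell)\ge\frac{h_1}{\mu_1}$, which yields $\mu_2\bigl(\frac{h_1}{\mu_1}-\frac{h_2}{\mu_2}\bigr)$.
\item For $i\ge 1$, $v(i,j-1,k+1,\ell)-v(i-1,j,k,\ell)\ge\frac{h_2}{\mu_2}$ suffices, which yields $\mu_1\bigl(\frac{h_1}{\mu_1}-\frac{h_2}{\mu_2}\bigr)$.
\end{itemize}
The paper proves these separately as \eqref{eq:one_more_noncollab} and \eqref{eq:bound_triage_noncollab} in Lemma \ref{lemma:diff}, the latter with $\min\{\frac{h_1}{\mu_1},\frac{h_2}{\mu_2}\}$. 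Each has its own induction on remaining services. A pathwise coupling would need the same bound for the period after system 2's distinguished NP returns to triage while system 1's is still at Station 1, so the coupling route does not avoid it. With these pieces in place, strictness under Assumption \ref{assm:preferred_collab} is immediate, because both right-hand sides above are then strictly positive.
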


\begin{proposition} \label{prop:noncollab}
    Consider any state $(i,j,k,\ell) \in \X_D$.
    \begin{enumerate}
        \item \label{state:noncollab_queue_in_collab}
        For $\ell \geq C^G$, suppose $\frac{h_1}{\mu_1} \leq \frac{\ell + 1}{C^G} \frac{h_2}{\mu_2}$ and $\frac{1}{\mu_1} \leq \frac{\ell +1}{C^G\mu_2}$, then $D(i,j,k,\ell) \leq 0$.
         \item \label{state:noncollab_no_queue_in_collab}
         For $\ell < C^G$, suppose $\frac{h_1}{\mu_1} \leq \frac{h_2}{\mu_2}$ and $\frac{1}{\mu_1} \leq \frac{1}{\mu_2}$, then $D(i,j,k,\ell) \leq 0$.
    \end{enumerate}
\end{proposition}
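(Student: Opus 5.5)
We will prove both statements with a coupling (sample-path) argument. The comparison is between the two successor states of a decision at $(i,j,k,\ell)$: state $A=(i,j-1,k+1,\ell)$, where the tagged patient is placed in non-collaborative service, and state $B=(i,j-1,k,\ell+1)$, where the tagged patient joins the collaborative station. Let $\pi^*$ be an optimal policy started from $B$. We construct a policy $\tilde\pi$ started from $A$ that mimics $\pi^*$ on every patient except the tagged one. Then $v(A)\le \mathbb{E}[T^{\tilde\pi}(A)]$, and it suffices to show $\mathbb{E}[T^{\tilde\pi}(A)]\le v(B)$. This gives $D(i,j,k,\ell)\le 0$ directly, so no induction on $i$ is needed.

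\textbf{Coupling.} All NP-patient pairs other than the tagged one share the same triage and service clocks in both systems. The tagged patient is the only difference: in $B$ he occupies a collaborative slot, possibly after waiting for a GP; in $A$ he is served by the NP alone.

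Statement \ref{state:noncollab_queue_in_collab} ($\ell\ge C^G$):
\begin{itemize}
\item Under FCFS in $B$, the tagged patient waits for $\ell-C^G+1$ GP completions at total rate $C^G\mu_2$ and then receives an $\exp(\mu_2)$ service. His expected sojourn is $\frac{\ell-C^G+1}{C^G\mu_2}+\frac1{\mu_2}\ge\frac{\ell+1}{C^G\mu_2}$, and his expected holding cost is at least $\frac{\ell+1}{C^G}\frac{h_2}{\mu_2}$. These are the quantities appearing in the hypotheses.
\item In $A$, the tagged patient's sojourn is $\exp(\mu_1)$, with mean $\frac1{\mu_1}$ and cost $\frac{h_1}{\mu_1}$.
\item Since $B$'s sojourn is a sum of exponentials with mean at least $\frac{1}{\mu_1}$, we couple so that the tagged patient leaves no later in $A$ in the sense needed. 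Concretely, the tagged NP in $A$ returns to triage at a time $\tau_A$, and the tagged NP in $B$ returns at $\tau_B$. We aim to couple with $\tau_A\le\tau_B$ almost surely, or at least to run $\tilde\pi$ so that the NP in $A$ idles until $\tau_B$ and then continues exactly as in $B$.
\item With idling, the upstream trajectories coincide after $\tau_B$. The cost difference is then only the tagged patient's own holding cost, giving $\mathbb{E}[\text{diff}]=\frac{h_1}{\mu_1}-\mathbb{E}[\text{tagged cost in }B]\le0$.
\end{itemize}

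The point needing care is the rest of the collaborative queue. In $B$ the tagged patient occupies a place in the collaborative line, whereas in $A$ he does not. Patients who join the collaborative station after him therefore start service earlier in $A$. We handle this by coupling GP service clocks by slot: in $A$, the slot the tagged patient would have used becomes available earlier and is given to later arrivals. This can only reduce their waiting, so the costs of the other patients are pathwise no larger in $A$. We will check this via a standard monotonicity of the number in the collaborative station; removing one job yields a dominated queue-length process.

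\textbf{Main obstacle: the idling step.} A non-idling variant of $\tilde\pi$ might be better still, but idling is admissible only if the model permits it. If it does not, we instead let the NP in $A$ start the next triage early and swap identities so that this NP imitates the NP freed at $\tau_B$ in $B$. Equivalently, we use an interchange argument: an earlier start at triage can only pull completions earlier, by monotonicity of $v$ in the position of work. We would first establish a preliminary lemma that $v$ is non-decreasing when a patient is moved "backward", for example $v(i,j,k,\ell)\le v(i+1,j-1,k,\ell)$-type relations. The expectation comparison for $\tau$ uses only means, so the memoryless property is used to justify replacing the stochastic comparison by a comparison of expected costs through the additive structure of the total cost.

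Statement \ref{state:noncollab_no_queue_in_collab} ($\ell<C^G$) is the special case with no waiting. The tagged patient starts service immediately with mean time $\frac1{\mu_2}\ge\frac1{\mu_1}$, so we can couple $\exp(\mu_1)\le_{st}\exp(\mu_2)$ pathwise with $\tau_A\le\tau_B$. The same argument then yields $D\le0$ from $\frac{h_1}{\mu_1}\le\frac{h_2}{\mu_2}$.
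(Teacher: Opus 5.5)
\textbf{The almost-sure coupling cannot be realized.} Your argument rests on coupling so that $\tau_A\le\tau_B$ a.s., or equivalently on letting the tagged NP in $A$ idle until $\tau_B$ and then copy $B$. The hypotheses of Statement \ref{state:noncollab_queue_in_collab} compare only means, and that cannot support a pathwise ordering.

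In $B$ the tagged NP is away from triage for $\tau_B=W+S$. Here $W$ is a sum of $\ell+1-C^G\ge 1$ independent $\exp(C^G\mu_2)$ times and $S\sim\exp(\mu_2)$, whereas $\tau_A\sim\exp(\mu_1)$.
\begin{itemize}
\item At the allowed boundary $\frac{1}{\mu_1}=\frac{\ell+1}{C^G\mu_2}$, the two have equal means but different laws, since $\tau_B$ has at least two phases. No coupling can then give $\tau_A\le\tau_B$ a.s., because that would force $\tau_A=\tau_B$ a.s.
\item More generally, $\mu_2>\mu_1$ is allowed because $\frac{\ell+1}{C^G}>1$. In that case $P(\tau_B>t)$ is at most a polynomial times $e^{-\mu_2 t}$, so $\exp(\mu_1)\not\le_{st}\tau_B$.
\end{itemize}
On the positive-probability event $\{\tau_A>\tau_B\}$, the NP in $A$ is still in service when its counterpart in $B$ returns. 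So ``idle until $\tau_B$, then continue as in $B$'' is simply undefined there.

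Your fallback, that the comparison ``uses only means'' via memorylessness and additivity, is the claim that needs proof, not a justification of it. The tagged patient's own holding cost is linear in his sojourn, so $\frac{h_1}{\mu_1}$ versus $\frac{\ell+1}{C^G}\frac{h_2}{\mu_2}$ settles that part. The cost his NP's absence imposes upstream is different: it is a nonlinear functional of the return times, depending on the state at which the NP rejoins triage and on the GP contention this creates. For example, in $B$ the tagged NP always returns after at least one NP already at Station 2, which staggers later GP requests. Hence $E\tau_A\le E\tau_B$ does not by itself order the expected upstream costs.

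\textbf{Two further problems with the construction.}
\begin{itemize}
\item Idling is not an action in this MDP: NPs are non-idling, and only the collaboration decision is controlled. So $E[T^{\tilde\pi}(A)]$ bounds $v(A)$ only after you show that idling cannot help. That is not automatic here, because postponing a triage keeps a patient at rate $h_0$ instead of moving him into the collaborative queue at rate $h_2$. Your proposed repair lemma is also ill-posed, since $(i+1,j-1,k,\ell)\notin\X$ whenever $(i,j,k,\ell)\in\X$.
\item ``Trajectories coincide after $\tau_B$'' contradicts ``later collaborative patients start earlier in $A$.'' Once they do start earlier, their NPs reach decision epochs in $A$ before their counterparts in $B$, and copying $B$'s actions is no longer non-anticipative.
\end{itemize}

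\textbf{Comparison with the paper.} The paper does not attempt a sample-path construction. For Statement \ref{state:noncollab_queue_in_collab} it argues at the level of expectations. It compares the tagged patient's expected cost and the NP's expected absence from triage under each action, and adds that Station 1 has no queueing. Your fallback is in substance that same argument, so the coupling layer you add on top of it does not survive.

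For Statement \ref{state:noncollab_no_queue_in_collab}, drop the coupling altogether. The hypothesis $\frac{1}{\mu_1}\le\frac{1}{\mu_2}$ means $\mu_1\ge\mu_2$, so Statement \ref{state:lemma_diff_larger_mu1_ub} of Lemma \ref{lemma:diff} gives $D(i,j,k,\ell)\le\frac{h_1}{\mu_1}-\frac{h_2}{\mu_2}\le 0$ directly. That lemma is proved by uniformizing at $d(j-1,k+1,\ell)$ and inducting on $M=2(i+j)+k+\ell$. The ``NP freed early'' effect you wanted to capture by idling is handled there as a comparison of values, through the lower bounds in Statements \ref{state:lemma_diff_one_more_base} and \ref{state:lemma_diff_bound_move_one}. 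It needs no sample-path ordering, idling, or interchange.
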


\begin{corollary} \label{cor:enough_GP}
    Suppose $C^G \geq C^p$ and consider any state $(i,j,k,\ell) \in \X_D$.
    \begin{enumerate}
        \item If $\frac{h_1}{\mu_1} \geq \frac{h_2}{\mu_2}$,
        \begin{enumerate}
            \item if $\mu_1 \leq \mu_2$, we have $D(i,j,k,\ell) \geq 0$ for all $i$.
            \item if $\mu_1 > \mu_2$, there exists a finite threshold $N(j,k,\ell)$ such that $D(i,j,k,\ell) \leq 0$ for all $i \geq N(j,k,\ell)$.
        \end{enumerate}
        \item If $\frac{h_1}{\mu_1} \leq \frac{h_2}{\mu_2}$,
        \begin{enumerate}
            \item if $\mu_1 \geq \mu_2$, we have $D(i,j,k,\ell) \leq 0$ for all $i$.
            \item if $\mu_1 < \mu_2$, there exists a finite threshold $N(j,k,\ell)$ such that $D(i,j,k,\ell) \geq 0$ for all $i \geq N(j,k,\ell)$.
        \end{enumerate}
    \end{enumerate}
    Notice that if $\frac{h_1}{\mu_1} = \frac{h_2}{\mu_2}$ and $\mu_1 = \mu_2$, a tie arises between the two options, and any decision rule can be applied to break the tie.
\end{corollary}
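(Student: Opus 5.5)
We prove Corollary \ref{cor:enough_GP} by reducing each case to Propositions \ref{prop:larger_mu1_threshold}, \ref{prop:always_collab} and \ref{prop:noncollab}, plus one new argument for case 2(b). The key observation is that when $C^G \geq C^p$, every reachable state satisfies $\ell \leq j+k+\ell \leq C^p \leq C^G$. In a decision state we have $j \geq 1$, so $\ell \leq C^p - 1 < C^G$. Hence every decision state lies in the regime $\ell < C^G$ treated by the supporting propositions, and $\min\{\ell, C^G\} = \ell$ throughout. We should check that the proofs of those propositions do not rely on $C^G < C^p$; they are stated only for $\ell < C^G$, so we assume they carry over.

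\textbf{Cases 1(a) and 2(a).} In case 1(a), $\frac{h_1}{\mu_1}\ge\frac{h_2}{\mu_2}$ and $\mu_2\ge\mu_1$ with $\ell<C^G$, which are exactly the hypotheses of Proposition \ref{prop:always_collab}; it gives $D\ge 0$ for all $i$. In case 2(a), $\frac{h_1}{\mu_1}\le \frac{h_2}{\mu_2}$ and $\frac{1}{\mu_1}\le\frac{1}{\mu_2}$, so Statement \ref{state:noncollab_no_queue_in_collab} of Proposition \ref{prop:noncollab} gives $D\le 0$.

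\textbf{Case 1(b).} Here $\mu_1>\mu_2$ strictly and $\ell<C^G$, so the strict-inequality clause of Proposition \ref{prop:larger_mu1_threshold} supplies the finite threshold $N(j,k,\ell)$.

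\textbf{Case 2(b): the main obstacle.} Here $\frac{h_1}{\mu_1}\le\frac{h_2}{\mu_2}$ and $\mu_1<\mu_2$, and no earlier result applies directly. The plan is to mirror the proof of Proposition \ref{prop:larger_mu1_threshold} with the roles of stations 1 and 2 exchanged. With ample GPs, stations 1 and 2 are symmetric infinite-server-like stations differing only in their rates $(\mu_n,h_n)$, so the swap should be legitimate.

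Concretely, we would couple the two systems starting from $(i,j-1,k+1,\ell)$ and $(i,j-1,k,\ell+1)$ on a common probability space, using uniformization. The extra patient completes at rate $\mu_2$ in the collaborative system versus $\mu_1$ in the other, so the collaborative system frees an NP sooner in expectation, by $\frac{1}{\mu_1}-\frac{1}{\mu_2}>0$. Because there is no blocking, this head start lets the collaborative system run one NP ahead on the remaining queue of $i$ patients.

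We would then bound $D$ from below by a term growing linearly in $i$ minus a bounded term. The growing term is the upstream holding cost saved, roughly $h_0\, i\, c\,(\frac{1}{\mu_1}-\frac{1}{\mu_2})$ for some $c>0$ reflecting the fraction of time the extra NP capacity is used. The bounded term is the service-cost penalty $\frac{h_2}{\mu_2}-\frac{h_1}{\mu_1}$ together with differences from later decisions, which the coupling lets the collaborative system imitate. As $i\to\infty$ the lower bound becomes positive, which yields $N(j,k,\ell)$.

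The delicate step is showing that the linear-in-$i$ saving survives the fact that subsequent optimal decisions may differ between the two systems. We handle this by letting the better-positioned system follow the other's actions; the resulting inequality then bounds the value functions, since $v$ is a minimum over policies.
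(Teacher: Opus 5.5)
Your handling of the key observation and of cases 1(a), 1(b) and 2(a) is sound. The fact that $\ell\le C^p-1<C^G$ on $\X_D$ is exactly the paper's starting point, and 1(a) and 1(b) follow from Propositions \ref{prop:always_collab} and \ref{prop:larger_mu1_threshold} as in the paper. For 2(a), citing Statement \ref{state:noncollab_no_queue_in_collab} of Proposition \ref{prop:noncollab} (ultimately Statement \ref{state:lemma_diff_larger_mu1_ub} of Lemma \ref{lemma:diff}) is a valid alternative to the paper's symmetry argument.

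The gap is case 2(b). You say you will mirror the proof of Proposition \ref{prop:larger_mu1_threshold}. What you then describe is not that proof, which is analytic (thinning plus the bounds of Lemma \ref{lemma:diff}). It is a new sample-path coupling whose decisive step is left open and, as described, does not go through. Once the collaborative system's NP is released early, the two systems are in different states, and their triage completions, hence decision epochs, no longer coincide. So ``the better-positioned system follows the other's actions'' does not define an admissible policy for it. Copying the other system's decisions with a time shift requires knowing that system's future. The dynamics are also non-idling, so the NP that is ahead cannot wait to resynchronize. Without an admissible policy you have no inequality between value functions. The saving ``roughly $h_0\, i\, c\,(\frac{1}{\mu_1}-\frac{1}{\mu_2})$'' therefore stays heuristic; in the paper, growth linear in $i$ comes only from the inductive bounds \eqref{eq:one_more_queue_full} and \eqref{eq:bound_triage_noncollab_full}.

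The missing idea is that the swap you noticed is an exact relabeling of the MDP, and that relabeling is all the paper's proof uses.
\begin{itemize}
\item Since $\min\{\ell,C^G\}=\ell$ on all of $\X$ when $C^G\ge C^p$, the equations \eqref{eq:opt-zero}--\eqref{eq:opt1} are invariant under $(k,\mu_1,h_1)\leftrightarrow(\ell,\mu_2,h_2)$.
\item Write $\hat v,\hat D$ for the model with these parameters exchanged. Then $v(i,j,k,\ell)=\hat v(i,j,\ell,k)$, hence $D(i,j,k,\ell)=-\hat D(i,j,\ell,k)$.
\item The hypotheses of 2(b) are those of 1(b) for the exchanged model, and the exchanged state has collaborative count $k\le C^p-1<C^G$.
\item So 1(b) gives $\hat D(i,j,\ell,k)\le 0$, i.e.\ $D(i,j,k,\ell)\ge 0$, for all large $i$. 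Case 2(a) likewise follows from 1(a).
\end{itemize}

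If you want an explicit argument instead, mirror the analytic proof rather than building a coupling. For $i\ge 1$, thin at rate $d(j-1,k,\ell+1)$ as in the proof of Proposition \ref{prop:always_collab}. Bound the $k\mu_1$, $\ell\mu_2$ and $(j-1)\mu_0$ terms below by $\frac{h_1}{\mu_1}-\frac{h_2}{\mu_2}$; this is the mirror image of Statement \ref{state:lemma_diff_larger_mu1_ub} of Lemma \ref{lemma:diff}, valid since there is no blocking. Bound the $(\mu_2-\mu_1)$ term below by \eqref{eq:bound_triage_noncollab_full}. This gives $D(i,j,k,\ell)\ge \frac{h_1}{\mu_1}-\frac{h_2}{\mu_2}+C_1' i$ with $C_1'>0$ independent of $i$.
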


\section{Heuristics Design} \label{sec:heuristics}
Building on structural patterns observed in optimal policies, this section develops a simple and effective heuristic to support practical decision-making. Recall from the Bellman equations, we observe that the sign of the difference $D(i,j,k,\ell)$ defined in \eqref{def:D} determines the preferred action in decision state $(i,j,k,\ell) \in \X_D$ (with $j\geq 1$): collaborative or non-collaborative care.

Although the optimal policy cannot, in general, be fully characterized as a threshold policy, in the sense that the sign of $D$ may change more than once as $i$ increases, we observe, both analytically and empirically, that for sufficiently large $i$, the sign of $D(i,j,k,\ell)$ stabilizes and remain unchanged. Given our focus on scenarios with large initial upstream queues (i.e., large $i$), the goal is to estimate the threshold at or beyond which the decision remains fixed  or equivalently, where $D$ ceases to change sign and to identify the corresponding sign and associated action.

\subsection{Motivation and definition of the piecewise linear approximation}
Motivated by the observation that the sign of $D$ stabilizes once $i$ reaches or exceeds a certain threshold, we approximate $D$ with a piecewise linear function $H$ in $i$ that preserves this structural property, and estimate the sign of $D$ using that of $H$.
Depending on the parameters, the sign of $H$ may remain either positive or non-positive for all sufficiently large values of $i$. Definition \ref{def:H_ceiling} introduces a specific piecewise linear form,  $H$, while Definition \ref{def:H_linear} further simplifies it to a linear form for $i \geq 1$, and denoted by $H_{\text{Lin}}$ for distinction.

\begin{definition} \textbf{(Piecewise linear form)} \label{def:H_ceiling}
    For any $(i,j,k,\ell) \in \X_D$, define $H(i,j,k,\ell)$ as follows:
    \begin{enumerate}
        \item When $i = 0$, 
        {\small
        \begin{align}
            H(0,j,k,\ell) := 
            \begin{cases}
                b,      & \text{ if }  \ell < C^G,\\
                b_\ell  & \text{ if }  \ell \geq C^G, 
            \end{cases} \label{eq:def_H_base}
        \end{align} 
        }
        where
        {\small
        \begin{align}
            b &:= \frac{h_1}{\mu_1}-\frac{h_2}{\mu_2}, \label{eq:def_b} \\
            b_\ell &:= \frac{h_1}{\mu_1}-\frac{\ell+1}{C^G}\frac{h_2}{\mu_2}, \quad \text{for } \ell \geq C^G \label{eq:def_b_ell}.
        \end{align}
        }
        \item When $i \geq 1$, so that $j+k+\ell=C^p$, 
        {\scriptsize
        \begin{align}
            H(i,j,k,\ell)
            &:= 
            \begin{cases}
                -1 & \text{ if } \ell \geq C^G \text{ and } c_\ell \leq 0 \text{ and } b_\ell \leq 0,\\
                w_{k,\ell} \cdot H^\infty(i,j,k,\ell) + (1-w_{k,\ell}) \cdot H^0(i,j,k,\ell) & \text{ if } \ell \geq C^G \text{ and } (c_\ell > 0 \text{ or } b_\ell > 0),\\
                H^{0}(i,j,k,\ell), & \text{ if }  \ell < C^G,
            \end{cases} \label{eq:def_H}
        \end{align}
        }
        where 
        {\scriptsize
            \begin{align}
                H^{0}(i,j,k,\ell) 
                &:= \begin{cases}
                \left( \left\lceil\frac{i-k}{C^p}\right\rceil \left(\frac{1}{\mu_1} - \frac{1}{\mu_2}\right)  - \sum_{r=C^G}^\ell \left\lceil\frac{i-k-r}{C^p}\right\rceil \frac{1}{C^G \mu_2}\right) h_0 + b_\ell, & \text{ if } \ell \geq C^G \text{ and } c \leq 0,\\
                \left( \left\lceil\frac{i-k-\ell'}{C^p}\right\rceil \left(\frac{1}{\mu_1} - \frac{\ell'+1}{C^G \mu_2}\right)  - \sum_{r=\ell'+1}^{\ell} \left\lceil\frac{i-k-r}{C^p}\right\rceil \frac{1}{C^G \mu_2}\right)h_0 + b_\ell, & \text{ if } \ell \geq C^G \text{ and } c_\ell \leq 0 < c,\\
                \left\lceil\frac{i-\ell}{C^p}\right\rceil \left(\frac{1}{\mu_1} - \frac{\ell+1}{C^G \mu_2}\right) h_0 + b_\ell, & \text{ if } \ell \geq C^G \text{ and } c_\ell > 0,\\
                \left\lceil\frac{i-k}{C^p}\right\rceil \left(\frac{1}{\mu_1}-\frac{1}{\mu_2}\right) h_0 + b, & \text{ if }  \ell < C^G \text{ and } c \leq 0,\\
                b, & \text{ if }  \ell < C^G \text{ and } c > 0,
            \end{cases} \label{eq:small_mu0_H}
            \end{align}
        }
        {\small
        \begin{align}
            c &: = \frac{h_0}{C^p}\left(\frac{1}{\mu_1} - \frac{1}{\mu_2}\right), \label{eq:def_c}\\
            c_\ell &:= \frac{h_0}{C^p}\left(\frac{1}{\mu_1} - \frac{\ell+1}{C^G\mu_2}\right), \quad \text{for } \ell \geq C^G, \label{eq:def_c_ell} \\
             \ell' &:= \max\left\{\ell \in \N \Big{|}\frac{\ell}{C^G \mu_2} < \frac{1}{\mu_1}\right\}, \label{eq:def_ell_prime} \quad \text{for } c > 0 \text{ and } \ell \geq C^G, 
        \end{align}
        \begin{align}
            H^\infty(i,j,k,\ell) &:= (i-y_\ell)c'+b', \label{eq:large_mu0_H_general}\\
            c' &:= -\frac{h_0}{C^G\mu_2}, & b'& := \frac{h_1-h_2}{\mu_1} - \frac{C^ph_2}{C^G\mu_2}, \label{def:b'_and_c'} \\
            y_\ell &:= C^p - \ell -1 + \frac{C^G \mu_2}{\mu_1}, \quad \text{ for } \ell \geq C^G, \label{def:y}\\
            w_{k,\ell} &:= \frac{j\mu_0}{j\mu_0 + k\mu_1 + C^G\mu_2}, \quad \text{ for } \ell \geq C^G.   \label{eq:def_w}
        \end{align}
        }
    \end{enumerate}
\end{definition}
Notice that $w_{k,\ell}$ in \eqref{eq:def_w} is independent of $j$, as $j$ is determined by the identity $j+k+\ell=C^p$.
Definition \ref{def:H_linear} below follows the same structure as Definition \ref{def:H_ceiling}, except that it replaces the piecewise function $H^0(i,j,k,\ell)$ with a linear form $H^0_{\text{Lin}}(i,j,k,\ell)$, as given in \eqref{eq:small_mu0_H_linear}.
\begin{definition} \textbf{(Linear form)} \label{def:H_linear}
    Consider any $(i,j,k,\ell) \in \X_D$. Define $H_{\text{Lin}}(i,j,k,\ell)$ as follows:
    \begin{enumerate}
        \item When $i = 0$, $H_{\text{Lin}}(0,j,k,\ell)$ is the same as in \eqref{eq:def_H_base}.
        \item When $i \geq 1$, so that $j+k+\ell=C^p$, 
        {\scriptsize
        \begin{align}
            H_{\text{Lin}}(i,j,k,\ell) 
            &:=
            \begin{cases}
                -1 & \text{ if } \ell \geq C^G \text{ and } c_\ell \leq 0 \text{ and } b_\ell \leq 0,\\
                w_{k,\ell} \cdot H^\infty(i,j,k,\ell) + (1-w_{k,\ell}) \cdot H^0_{\text{Lin}}(i,j,k,\ell) & \text{ if } \ell \geq C^G \text{ and } (c_\ell > 0 \text{ or } b_\ell > 0),\\
                H^{0}_{\text{Lin}}(i,j,k,\ell), & \text{ if }  \ell < C^G,
            \end{cases} \label{eq:def_H_linear}
          \end{align}
          }
        where 
        {\small
        \begin{align}
            H^0_{\text{Lin}}(i,j,k,\ell) := \begin{cases}
                ic_\ell + b_\ell, & \text{ if }  \ell \geq C^G, \\
                ic+b & \text{ if } \ell < C^G,
            \end{cases} \label{eq:small_mu0_H_linear}
        \end{align}
        }
        $b, b_\ell, c, c_\ell, b_\ell$, $H$, and $w_{k,\ell}$ are defined in \eqref{eq:def_b}, \eqref{eq:def_b_ell}, \eqref{eq:def_c}, \eqref{eq:def_c_ell}, \eqref{eq:large_mu0_H_general} and \eqref{eq:def_w}, respectively.
    \end{enumerate}
\end{definition}

The construction of $H$ and $H_{\text{Lin}}$ is well-motivated. Focusing on large $i$, we provide an intuitive case-by-case preview when $i\geq1$, before introducing the detailed design process in Section \ref{sec:construct_H}.
\begin{enumerate}[label= \textbf{Case} \arabic*:, leftmargin=2.5\parindent]
    \item Suppose $\ell \geq C^G$.
    \begin{enumerate}[label= \textbf{Subcase} \alph*:, ref=\theenumi(\alph*), leftmargin=2\parindent]
        \item If $ c_\ell \leq 0$ and $b_\ell \leq 0$ (the first case in \eqref{eq:def_H} and \eqref{eq:def_H_linear}), we let both $H$ and $H_{\text{Lin}}$ to $-1 \leq 0$, consistent with $D \leq 0$ (Statement \ref{state:noncollab_queue_in_collab} in Proposition \ref{prop:noncollab}).
        \item If $ c_\ell > 0$ or $b_\ell > 0$ (the second case in \eqref{eq:def_H} and \eqref{eq:def_H_linear}), $H$ and $H_{\text{Lin}}$ are defined as a convex combination of two piecewise linear functions, with the weight capturing the relative magnitude of upstream and downstream service rates. 
    \end{enumerate}
    \item For $\ell < C^G$ (the final case of \eqref{eq:def_H} and \eqref{eq:def_H_linear}), recall that $b > 0$ by Assumption \ref{assm:preferred_collab}.
    \begin{enumerate}[label= \textbf{Subcase} \alph*:, leftmargin=2\parindent]
        \item If $\mu_1 > \mu_2$, which implies that $c < 0$ as given in \eqref{eq:def_c}, then both $H$ and $H_{\text{Lin}}$ become non-positive once $i$ reaches a finite threshold, aligning with the result that $D(i,j,k,\ell) \leq 0$ for all sufficiently large $i$ (Proposition \ref{prop:larger_mu1_threshold}).
        \item If $\mu_1 \leq \mu_2$, both $H$ and $H_{\text{Lin}}$ remain strictly positive for all $i$, in agreement with $D(i,j,k,\ell) > 0$ for all $i$ (Proposition \ref{prop:always_collab}).
    \end{enumerate}
\end{enumerate}

\subsection{Using \texorpdfstring{$H$}{H} and \texorpdfstring{$H_{\text{Lin}}$}{HLin} to approximate decisions with \texorpdfstring{$D$}{D}}
The function $H$ (resp. $H_{\text{Lin}}$) induces a heuristic policy based on its sign and is formalized in Definition \ref{def:action_heuristic}. Specifically, it suggests choosing collaborative care (i.e., $a=1$) if and only if $H(i,j,k,\ell) > 0$ (resp. $H_{\text{Lin}}(i,j,k,\ell) > 0$) in decision state $(i,j,k,\ell) \in \X_D$. 
\begin{definition} \label{def:action_heuristic}
    Consider $(i,j,k,\ell) \in \X_D$. The actions suggested by $H$ and $H_{\text{Lin}}$ are given by
    \begin{align}
        a_H(i,j,k,\ell) := \mathbb{I}\{H(i,j,k,\ell)>0\}, \label{eq:def_a_H}\\
        a_{H_{\text{Lin}}}(i,j,k,\ell) := \mathbb{I}\{H_{\text{Lin}}(i,j,k,\ell)>0\}, \label{eq:def_a_Hl}
    \end{align}respectively, where $H$ and $H_{\text{Lin}}$ are provided in Definitions \ref{def:H_ceiling} and \ref{def:H_linear}, respectively.
\end{definition}
In particular, as an affine function of $i$, $H_{\text{Lin}}(i,j,k,\ell)$ changes sign at most once as $i$ increases, suggesting a threshold-type heuristic policy. Building on the expression of $H_{\text{Lin}}$ in \eqref{eq:def_H_linear}, the corresponding action in \eqref{eq:def_a_H} is explicitly determined by system parameters (and $k,\ell$):
{\small
\begin{align}
    a_{H_{\text{Lin}}}(i,j,k,\ell) = 
    \begin{cases}
        0 & \text{ if } \ell \geq C^G \text{ and } c_\ell \leq 0 \text{ and } b_\ell \leq 0,\\
        \mathbb{I}\{i < R_2(k,\ell)\} & \text{ if } \ell \geq C^G \text{ and } \tilde{c}_{k,\ell} \leq 0,\\
        \mathbb{I}\{i > R_2(k,\ell)\} & \text{ if } \ell \geq C^G \text{ and } \tilde{c}_{k,\ell} > 0,\\
        \mathbb{I}\{i < R_1\} & \text{ if }  \ell < C^G \text{ and } \mu_1 > \mu_2, \\
        1 &\text{ if } \ell < C^G \text{ and } \mu_2 \geq \mu_1,
    \end{cases} \label{def:iH}
\end{align}
}
where
{\small
\begin{align}
    R_1 &:= -\frac{\left(\frac{h_1}{\mu_1}-\frac{h_2}{\mu_2}\right)C^p}{\left(\frac{1}{\mu_1} - \frac{1}{\mu_2}\right)h_0}, \label{eq:R1} \\
    R_2(k,\ell) &:= \begin{cases}
        -\frac{\tilde{b}_{k,\ell}}{\tilde{c}_{k,\ell}}, &\text{ if } \tilde{c}_{k,\ell} \neq 0, \\
        0, &\text{ if } \tilde{c}_{k,\ell} = 0 \text{ and }  \tilde{b}_{k,\ell} \leq 0, \\
        \infty, &\text{ if } \tilde{c}_{k,\ell} = 0 \text{ and } \tilde{b}_{k,\ell}>0. \\
    \end{cases} \\
    \tilde{c}_{k,\ell} &:= w_{k,\ell} \cdot c' + (1-w_{k,\ell}) \cdot c_\ell,\\
    \tilde{b}_{k,\ell} &:= w_{k,\ell} \cdot ( -y_\ell c'+b') + (1-w_{k,\ell}) \cdot b_\ell.
\end{align}
}
Note that $R_1 > 0$ by Assumption \ref{assm:preferred_collab} and $\mu_1 > \mu_2$. 
The heuristic is also summarized in Table \ref{table:heuristic}.

\begin{table}[htbp]
\caption{Action at state $(i,j,k,\ell) \in X_D$ suggested by $H_{\text{Lin}}$, as defined in \eqref{eq:def_a_Hl}.} \label{table:heuristic}
\centering
\small
\begin{tabular}{|c|c|c|}
\hline
\multicolumn{1}{|c|}{}                         & $\ell< C^G$ & $\ell \geq C^G$ \\ \hline
\multicolumn{1}{|l|}{$\mu_1 \geq \mu_2$}                        &     $a=1$               & \multirow{3}{*}{ $a = \left\{
\begin{array}{ll}
0 & \text{if } c_\ell, b_\ell \leq 0, \\
\mathbb{I}\{i < R_2(k,\ell)\} & \text{if } \tilde{c}_{k,\ell} \leq 0, \\
\mathbb{I}\{i > R_2(k,\ell)\} & \text{if } \tilde{c}_{k,\ell} > 0.
\end{array}
\right. $}      \\ \cline{1-2}
\multicolumn{1}{|c|}{\multirow{2}{*}{$\mu_1 > \mu_2$}}  &  \multirow{2}{*}{$a = \mathbb{I}\{i < R_1\}$}                 &                        \\
\multicolumn{1}{|c|}{}                     &                  &                        \\ \hline
\end{tabular}
\end{table}

Note that, unlike the theoretical results, which analyze optimal policies based on the relative magnitudes of the downstream service rates $\mu_1$ and $\mu_2$ (see Table \ref{table:policy}), the heuristic is instead characterized by the relative ordering of $\ell$ and $C^G$, reflecting the impact of collaborative care queueing on upstream patients.

\subsection{Construction of \texorpdfstring{$H$}{H} and \texorpdfstring{$H_{\text{Lin}}$}{HLin} across the parameter space} \label{sec:construct_H}
In this subsection, we present the detailed process of constructing $H$ and its linear simplification $H_{\text{Lin}}$ under various parameter configurations. 
Looking closely at the expression of $D(i,j,k,\ell)$ in \eqref{def:D}, which compares the values of two costs, we note that
{\small
\begin{align*}
    v(i,j-1,k+1,\ell)
    & = c_0^1h_0 + c_1^1h_1 + c_2^1h_2,
\end{align*}
}
where the coefficients $c_0^1 := \mathbb{E}_{(i,j-1,k+1,\ell)} \big[\int_{0}^{\infty}\big(Q^{\pi^*}_{0,0}(t) + Q^{\pi^*}_{0,1}(t)\big) dt \big]$, $c_1^1 := \mathbb{E}_{(i,j-1,k+1,\ell)} \big[\int_{0}^{\infty}Q^{\pi^*}_1(t) dt \big]$, and $c_2^1 := \mathbb{E}_{(i,j-1,k+1,\ell)} \big[\int_{0}^{\infty}Q^{\pi^*}_2(t) dt \big]$. Here $\pi^*$ denotes an optimal policy.
Similarly, 
\begin{align*}
    v(i,j-1,k,\ell+1) = c_0^2h_0 + c_1^2h_1 + c_2^2h_2,
\end{align*}
for some coefficients $c_0^2, c_1^2$ and $c_2^2$. Therefore, $D(i,j,k,\ell)$ can be expressed as
\begin{align}
     D(i,j,k,\ell) = D_{0}(i,j,k,\ell) + D_{1,2}(i,j,k,\ell), \label{eq:D_decompose}
\end{align}where 
\begin{align}
    D_0(i,j,k,\ell) = (c_0^1-c_0^2)h_0, \qquad
    D_{1,2}(i,j,k,\ell) = (c_1^1-c_1^2)h_1 + (c_2^1-c_2^2)h_2, \label{eq:D0_and_D12}
\end{align}
represent the upstream and downstream cost differences under the two actions, respectively.
This decomposition suggests estimating $D_0$ and $D_{1,2}$ separately. Let $H_0$ and $H_{1,2}$ be their respective approximations, each piece-wise linear in $i$, leading to the combined estimate:
\begin{align}
    H(i,j,k,\ell) = H_{0}(i,j,k,\ell) + H_{1,2}(i,j,k,\ell). \label{eq:H_decompose}
\end{align}
Notably, $D_0$ in \eqref{eq:D0_and_D12} is proportional to $h_0$ and independent of $h_1$ and $h_2$, while $D_{1,2}$ in \eqref{eq:D0_and_D12} depends only on $h_1$ and $h_2$. The design of $H_0$ and $H_{1,2}$ should reflect these dependencies accordingly.
The breakdown of the upstream and downstream components in \eqref{eq:D_decompose} and \eqref{eq:H_decompose} is also behaviorally grounded, as it mirrors the trade-off a decision-maker faces when evaluating a single action against two (potentially competing) objectives:
\begin{enumerate}[leftmargin=3cm, label=\textbf{Objective} \arabic*.,ref={ \arabic*}]
    \item \label{obj:upstream} The cost difference at Station 0 due to different downstream service times.
    \item \label{obj:downstream} The cost difference of service between Stations 1 and 2.
\end{enumerate}
The precise forms of $H_0$ and $H_{1,2}$ depend critically on the relative magnitudes of the service rates at Stations 1 and 2. We examine these through distinct cases based on the value of $\ell$ in the remainder of this subsection. Specifically, whether the downstream service allocation leads to immediate collaborative service or induces a queue, requiring the NP (and the associated patient) to wait.

\subsubsection{No immediate queueing effects if collaborating \texorpdfstring{($\ell < C^G$)}{(ell < CG)}} \label{sec:H_no_queue_in_collab}
Consider first the decision state $(i,j,k,\ell) \in \X_D$ where $\ell < C^G$, so that both actions ensure immediate service for the current patient. 
The analysis can be separated into two cases:
\begin{enumerate} [label= \textbf{Case} \arabic*:, ref = \arabic*, leftmargin=3\parindent] 
    \item \label{case:large_mu2_positive}
    If $\mu_2 \geq \mu_1$, Proposition \ref{prop:always_collab} implies that $D(i,j,k,\ell) > 0$ for all $i$. In this case, any choice of $H(i,j,k,\ell) >0$ will suffice for the consistency of the sign of $D$.
    \item \label{case:large_mu1} If $\mu_1 > \mu_2$, recall from Proposition \ref{prop:larger_mu1_threshold} that there exists a finite threshold $N(j,k,\ell)$ such that $D(i,j,k,\ell) \leq 0$ for all $i\geq N(j,k,\ell)$. 
\end{enumerate}
Therefore, we focus on Case \ref{case:large_mu1} in developing $H$. 
We begin by introducing the deterministic process.

\begin{assumption}[\textbf{Deterministic process}] \label{assum:deterministic} We say that the system follows the \textit{deterministic model assumptions} if
    \begin{enumerate}
        \item \label{det:state_space}
        The state space remains unchanged and discrete.
        \item \label{det:amount_of_work}
        The system evolves deterministically, with the time spent at each station equal to the expected sojourn time at the corresponding station in the stochastic model.
        \item \label{det:downstream_blocking}
        $C^p$ remains unchanged; Replace $C^G$ with $C^G_{det}$ such that $C^G_{det} \geq C^p$. This eliminates downstream blocking effects (used to approximate the case with $\ell < C^G$ in the stochastic model).  
        \item \label{det:decision_points}
        Decision points continue to be discrete and coincides with a triage completion. 
    \end{enumerate}
    By Statements \ref{det:amount_of_work} and \ref{det:downstream_blocking}, the total time required at Station $n$ equals $\frac{1}{\mu_n}$ for $n = 0, 1, 2$.
\end{assumption}
While the theoretical results do not guarantee a monotone optimal control, they motivate a threshold heuristic (and thus a monotone $H$), such that the decision is $a=1$ when $i < N(j,k,\ell)$ and $a=0$ otherwise. 
It remains to construct $H(i,j,k,\ell)$ to estimate $N(j,k,\ell)$.
Note that if at the point $i$, there is a question of what to do, then we are at or close to the threshold. Since from that point, the system load is reducing ($i$ is getting smaller) and is likely below the threshold after the initial, we consider a policy that collaborates for all time.
\begin{definition}[\textbf{Always-collaborative control} $\mathbf{\pi^{co\ell}}$] \label{defn:always_collab}
    We refer to the control policy that chooses collaborative service ($a=1$) for all decisions beyond the current time as \emph{always-collaborative control}.
\end{definition}

To formalize Objectives \ref{obj:upstream} and \ref{obj:downstream} and define $H$ accordingly under the \textit{deterministic model assumptions} and \textit{always-collaborative control $\mathbf{\pi^{co\ell}}$}, consider two systems that evolve from an identical decision state $(i,j,k,\ell) \in \X_D$ but differ in the action taken at the initial decision point:
\begin{itemize}
    \item System 1 proceeds immediately after the first decision-maker (denoted as $\text{NP}^*_{(1)}$) selects non-collaborative care ($a=0$). That is, System 1 starts at state $(i,j-1,k+1,\ell)$, where all services are poised to begin.
    \item System 2 starts from state $(i, j - 1, k, \ell + 1)$ with all services about to commence, resulting from a second decision-maker ($\text{NP}^*_{(2)}$) who instead chooses collaborative care ($a = 1$) at $(i,j,k,\ell)$. 
\end{itemize}

\begin{assumption}\label{assm:always-collab}
    Suppose all decisions in Systems 1 and 2 follow always-collaborative control.
\end{assumption}
We now provide formal definitions of $H_0$ and $H_{1,2}$, which represent approximations of the cost difference incurred upstream and downstream, respectively, by differing the initial action only.
\begin{definition}\label{def:H0_and_H12}
    Consider $(i,j,k,\ell)\in\X_D$. Suppose the modeling and policy assumptions described in Assumptions \ref{assum:deterministic} and \ref{assm:always-collab}, respectively, hold for Systems 1 and 2. Define the following quantities:
    \begin{itemize}
        \item $H_{1,2}(i,j,k,\ell)$ represents the \textbf{total} downstream cost difference (at Station 1 or 2) between Systems 1 and 2 due to the initial decision not to collaborate versus to collaborate;
        \item $H_0(i,j,k,\ell)$ be the \textbf{total} upstream cost difference (at Station 0) between the two systems due to the time difference required to leave Station 1 or 2;
    \end{itemize}
\end{definition}

Proposition \ref{prop:heur_no_queue_in_collab} below provides the expressions of $H_0$ and $H_{1,2}$ in the case that $\mu_1 \geq \mu_2$. The proof is given in Appendix \ref{sec:proof_heuristic}.
\begin{proposition} \label{prop:heur_no_queue_in_collab}
    Recall $\ell < C^G$ and suppose $\mu_1 \geq \mu_2$. Consider any state $(i,j,k,\ell) \in \X_D$. The following hold for $H_0(i,j,k,\ell)$ and $H_{1,2}(i,j,k,\ell)$, as defined in Definition \ref{def:H0_and_H12}.
    \begin{align}
        H_0(i,j,k,\ell)& = \left\lceil\frac{i-k}{C^p}\right\rceil \left(\frac{1}{\mu_1}-  \frac{1}{\mu_2}\right) h_0,
        \label{eq:H0} \\
        H_{1,2}(i,j,k,\ell)& = b = \frac{h_1}{\mu_1}-  \frac{h_2}{\mu_2}, \label{eq:H12}
    \end{align}
    as provided in \eqref{eq:def_b}.
\end{proposition}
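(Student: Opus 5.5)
The plan is to compute both quantities directly by coupling Systems 1 and 2 under the deterministic model of Assumption \ref{assum:deterministic} and the always-collaborative control of Assumption \ref{assm:always-collab}. Because $C^G_{det}\ge C^p$, there is no downstream blocking, so every station-2 service lasts exactly $\frac{1}{\mu_2}$, every station-1 service lasts $\frac{1}{\mu_1}$, and every triage lasts $\frac{1}{\mu_0}$. The two systems start from $(i,j-1,k+1,\ell)$ and $(i,j-1,k,\ell+1)$ respectively. They differ only in the downstream service of the patient attended by $\text{NP}^*$, which is $\text{NP}^*_{(1)}$ in System 1 and $\text{NP}^*_{(2)}$ in System 2. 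All other NPs in progress are identical across the two systems, and every later decision is $a=1$ in both.

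For $H_{1,2}$, I would observe that every patient other than the decision patient receives the same type of downstream service, namely collaborative care of length $\frac{1}{\mu_2}$, in both systems. This holds for patients already at Station 1 or 2 and, by Assumption \ref{assm:always-collab}, for all future patients. Their downstream holding costs therefore cancel exactly, and only the decision patient contributes. This patient costs $\frac{h_1}{\mu_1}$ in System 1 and $\frac{h_2}{\mu_2}$ in System 2, which gives \eqref{eq:H12}, $H_{1,2}=b$.

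For $H_0$, the key fact is that $\text{NP}^*$ is released back to triage earlier in System 1 by $\Delta:=\frac{1}{\mu_2}-\frac{1}{\mu_1}\ge 0$, since $\mu_1\ge\mu_2$. After that point, every patient that $\text{NP}^*$ triages from the queue starts, and hence leaves Station 0, exactly $\Delta$ earlier in System 1. This is because all of $\text{NP}^*$'s later cycles have the common length $\frac{1}{\mu_0}+\frac{1}{\mu_2}$ in both systems. Each such patient therefore saves $\Delta h_0$ of upstream holding cost, while patients taken by other NPs are unaffected. This yields $H_0=-(\#\text{patients served by NP}^*)\cdot\Delta h_0=(\#)\left(\frac{1}{\mu_1}-\frac{1}{\mu_2}\right)h_0$.

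It remains to count these patients. The $k$ NPs already at Station 1 started their services before $\text{NP}^*$'s decision and finish no later than it, so they claim the first $k$ queued patients. $\text{NP}^*$ is next, and from then on the NPs return in a fixed cyclic order with common period. Under FCFS, $\text{NP}^*$ therefore takes the queued patients in positions $k+1,\,k+1+C^p,\,k+1+2C^p,\dots$ up to $i$, which is $\left\lceil\frac{i-k}{C^p}\right\rceil$ patients. This gives \eqref{eq:H0}.

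The main obstacle is justifying this return order in \emph{both} systems simultaneously. In System 1, $\text{NP}^*$'s earlier return could in principle overtake NPs who were at Station 2 or in triage, which would change which queued patients it claims; tie-breaking among simultaneous returns raises the same issue. My first attempt would be to show that the index set of patients assigned to $\text{NP}^*$ is the same in the two systems: the NPs ahead of $\text{NP}^*$ are exactly the $k$ station-1 NPs, whose release times do not depend on the action, and this relative position is preserved by the equal cycle lengths. Failing that, I would argue that $\left\lceil\frac{i-k}{C^p}\right\rceil$ is the intended count under the natural ordering convention of the deterministic model, which treats $\text{NP}^*$ as the $(k+1)$-st NP to return.
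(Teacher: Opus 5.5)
Your proposal is correct and follows essentially the same route as the paper. $H_{1,2}=b$ comes from cancelling every downstream cost except the decision patient's, and $H_0$ comes from the $\frac{1}{\mu_2}-\frac{1}{\mu_1}$ shift in each of $\text{NP}^*$'s $\left\lceil\frac{i-k}{C^p}\right\rceil$ triage completions, with the ordering issue you flag resolved in the paper exactly by your fallback: a without-loss-of-generality alignment that makes $\text{NP}^*$ the $(k+1)$-st NP to return in both systems, after which the common cycle length $\frac{1}{\mu_0}+\frac{1}{\mu_2}$ preserves that order.
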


Substituting \eqref{eq:H0} and \eqref{eq:H12} into \eqref{eq:H_decompose} yields the expression for $H(i,j,k,\ell)$ presented in \eqref{eq:def_H} of Definition \ref{def:H_ceiling} for the case $\ell<C^G$ with $\mu_1 \geq \mu_2$. When $\mu_1 \leq \mu_2$, we set $H(i,j,k,\ell) = b > 0$ by Assumption \ref{assm:preferred_collab} to match the sign of $D$ (see Case \ref{case:large_mu2_positive}). 
Definition \ref{def:H_linear} further simplifies this formulation by replacing the ceiling term $\left\lceil\frac{i-k}{C^p}\right\rceil$ 
with a linear term $\frac{i}{C^p}$, thereby yielding a unified representation applicable to both $\mu_1 > \mu_2$ and $\mu_2 \geq \mu_1$. This modification is motivated by the following considerations regarding the structure of the resulting linear approximation $H_{\text{Lin}}$ in \eqref{eq:def_H_linear}:
\begin{itemize}
    \item A linear function is monotone and therefore induces a threshold-type policy via \eqref{eq:def_a_Hl}. Moreover, the formula in \eqref{eq:small_mu0_H_linear} has simple expressions for both slope $c = \frac{h_0}{C^p}\left(\frac{1}{\mu_1}-  \frac{1}{\mu_2}\right)$ and intercept $b = \frac{h_1}{\mu_1}-  \frac{h_2}{\mu_2}$, yielding a simple formula for the estimated threshold $R_1$ (see \eqref{eq:R1}).
    \item The linear form preserves the original analytical and practical interpretation of $H$ as a composite approximation of cost differences arising from both upstream and downstream dynamics. These two components remain clearly identifiable in the resulting expression:
    \begin{itemize}
        \item The intercept $H_{1,2}$ depends solely on downstream parameters $\mu_1$, $h_1$, $\mu_2$, and $h_2$, thus approximating the downstream contribution $D_{1,2}$ in \eqref{eq:D0_and_D12} and capturing Objective \ref{obj:downstream}.
        \item The remaining term in \eqref{eq:small_mu0_H_linear}, which is proportional to $h_0$ and the time difference $\frac{1}{\mu_1}-  \frac{1}{\mu_2}$ required to leave downstream stations, approximates $D_0$ in \eqref{eq:D0_and_D12} and reflects Objective \ref{obj:upstream}.
    \end{itemize}
    \item The linear form in \eqref{eq:small_mu0_H_linear} extends the preceding decomposition of upstream and downstream cost differences to the case $\mu_1 \leq \mu_2$ for $\ell < C^G$ through a unified expression. Furthermore, when $\mu_1 \leq\mu_2$, we have $H_{\text{Lin}}(i,j,k,\ell) = ic+b >0$, which is consistent with the result $D > 0$ (see Case \ref{case:large_mu2_positive}). This follows from $b > 0$ (by \eqref{eq:def_b} and Assumption \ref{assm:preferred_collab}) and $c \geq 0$ (see \eqref{eq:def_c}). 
    
    \item The two expressions in \eqref{eq:small_mu0_H} and \eqref{eq:small_mu0_H_linear} often yield similar values when $\mu_1 > \mu_2$ for $\ell<C^G$: 
    \begin{itemize}
        \item Applying $i = 0$ in \eqref{eq:small_mu0_H} and \eqref{eq:small_mu0_H_linear} returns $H(0,j,k,\ell) = H_{\text{Lin}}(0,j,k,\ell) = b > 0$. This coincides with the base case in \eqref{eq:def_H_base}, and suggest a finite, positive threshold estimate. 
        \item When $i \geq 1$, the identity $j + k + \ell = C^p$ implies $\frac{i-k}{C^p} \leq \left\lceil\frac{i-k}{C^p}\right\rceil < \frac{i-k}{C^p} + 1 = \frac{i + j + \ell}{C^p}$.
        The term $\frac{i}{C^p}$ used in \eqref{eq:small_mu0_H_linear} also lies within this range. Furthermore, as $i$ (and hence the threshold $R_1$) increases, the gap between the ceiling-based expression \eqref{eq:small_mu0_H} and its linear counterpart \eqref{eq:small_mu0_H_linear} narrows, supporting the validity of the linear approximation.
    \end{itemize}
\end{itemize}

\subsubsection{Immediate queueing effects if collaborating \texorpdfstring{($\ell \geq C^G$)}{(ell >= CG)}} \label{sec:H_queue_in_collab}
We now examine the decision state $(i, j, k, \ell) \in \X_D$ with $\ell \geq C^G$, where choosing collaborative care does not result in immediate service. 
In this setting, the optimal policy exhibits distinct structural behaviors depending on the relative magnitude of $\mu_0$ compared to $\mu_1$ and $\mu_2$: 
\begin{enumerate} [label= \textbf{Case} \arabic*:,ref={ \arabic*}, leftmargin=3\parindent] 
    \item \label{case:small_mu0}
    If $\mu_0$ is very small relative to both $\mu_1$ and $\mu_2$, any downstream service (if applicable) is likely to complete before the next triage finishes (hence before the next decision point). Consequently, blocking is limited to the NP–patient pairs initially in the collaborative queue, while all subsequent pairs begin service without waiting.
    \item \label{case:large_mu0}
    If $\mu_0$ is relatively large, the blocking in the collaborative station tends to compound, since NPs complete triage more quickly than GPs clear their queue. This leads to successive decisions being made while blocking persists or intensifies.
\end{enumerate}
For fixed downstream service rates $\mu_1$ and $\mu_2$, we add superscripts to $D^{\mu_0}$ and its approximation $H^{\mu_0}$ to highlight their dependence on $\mu_0$. 
Accordingly, the function $H^{0}$ (resp. $H^{\infty}$) approximates $D^0$ (resp. $D^\infty$), the limiting form of $D^{\mu_0}$ as $\mu_0 \to 0$ (resp. $\mu_0 \to \infty$) in these extreme cases.

Recall first from Proposition \ref{prop:noncollab} that if $c_\ell \leq 0$ and $b_\ell \leq 0$, where $b_\ell$ and $c_\ell$ are defined in \eqref{eq:def_b_ell} and \eqref{eq:def_c_ell}, respectively, then $D(i,j,k,\ell) \leq 0$. Accordingly, it suffices for any approximation $H\leq 0$ to preserve the sign consistency with $D$. 
Otherwise (either $c_\ell > 0$ or $b_\ell>0$), we construct $H$ as a convex combination of two approximations: $H^0$ derived from the small-$\mu_0$ model under Case \ref{case:small_mu0}, and $H^\infty$ based on the large-$\mu_0$ limiting regime under Case \ref{case:large_mu0}: 
\begin{align}
    H(i,j,k,\ell) & := (1-w_{\ell}) \cdot H^{0}(i,j,k,\ell) + w_{\ell} \cdot H^{\infty}(i,j,k,\ell). \label{eq:convex_comb}
\end{align}
Here, the weighting parameter $w_{k,\ell} \in (0,1)$, given in \eqref{eq:def_w}, represents the probability that triage completion occurs before all other events at state $x=(i,j,k,\ell) \in X_D$. 
Moreover, for fixed $x$, notice that as $\mu_0 \to 0$, we have $w_{k,\ell} \to 0$ and $H \to H^0$; conversely, as $\mu_0 \to \infty$, $w_{k,\ell} \to 1$ and $H \to H^\infty$.

The remainder of this subsection develops the approximations $H^0$ and $H^\infty$, which underpin the convex combination in \eqref{eq:convex_comb}, under the two extreme cases of $\mu_0$. We begin with Case \ref{case:small_mu0}.
The assumption that $\mu_0$ is sufficiently small, so that downstream blocking complications are negligible beyond the initial patients already present in the queue, suggests an extension of the deterministic process framework introduced for the case $\ell < C^G$ in Assumption \ref{assum:deterministic}. 
The proposed extension preserves the original structure with a key modification to Statement \ref{det:downstream_blocking}, as formalized below.
\begin{assumption} [\textbf{Deterministic process with initial downstream blocking}] \label{assum:deterministic_initial_block}    
    We say that the system satisfies the \textit{assumptions of a deterministic model with initial downstream blocking} (hereafter referred to as the \textit{modified deterministic model assumptions}) if Statements \ref{det:state_space}, \ref{det:amount_of_work}, and \ref{det:decision_points} in Assumption \ref{assum:deterministic} remain unchanged, while Statement \ref{det:downstream_blocking} is replaced by the following:
    \begin{enumerate}[label={\arabic*$'$.}, ref = {\arabic*$'$}]\addtocounter{enumi}{2}
    \item \label{modified:downstream_blocking}
    $\frac{1}{\mu_0} > \max\left\{\frac{1}{\mu_1}, \frac{C^p}{C^G \mu_2}\right\}$, ensuring any initial downstream service completes before upstream.
\end{enumerate}
\end{assumption}
As before, consider a decision-maker $\text{NP}^*_{(1)}$ (resp. $\text{NP}^*_{(2)}$) who initially takes action $a=0$ (resp. $a=1$), resulting in System 1 (resp. System 2) starting from $(i,j-1,k+1,\ell)$ (resp. $(i,j-1,k,\ell+1)$), with all services about to begin. 
The definitions of the approximations $H_0^0$ and $H_{1,2}^0$ under small $\mu_0$ remain as given in Definition \ref{def:H0_and_H12}, with Assumption \ref{assum:deterministic} replaced by Assumption \ref{assum:deterministic_initial_block}.

Consider System 2 and the calculations for System 1 are analogous. It follows immediately from Statements \ref{det:amount_of_work} and Statement \ref{modified:downstream_blocking} in Assumption \ref{assum:deterministic_initial_block} that the time required to complete a service at Station $n$ equals $\frac{1}{\mu_n}$ for $n = 0,1$. For the initial set of patients at Station 2, the first $C^G$ ones begin collaborative service immediately and finish in $\frac{1}{\mu_2}$ time units, while any remaining pairs must wait in the queue. For the most recent arrival to the queue, the total remaining time in system is $\frac{\ell+1}{C^G \mu_2}$ (recall \eqref{eq:time_and_cost}). The system times for the remaining queued patients are computed analogously. After this initial group, all subsequent collaborative services proceed at a fixed duration of $\frac{1}{\mu_2}$ without delay. 
Recalling that the superscript denotes the case where $\mu_0$ is small, the following proposition provides explicit expressions for $H_0^0$ and $H_{1,2}^0$ in this setting. The proof appears in Appendix \ref{sec:proof_heuristic}.

\begin{proposition} \label{prop:heur_queue_in_collab_small_mu0}
    Consider $(i,j,k,\ell) \in \X_D$ where $\ell \geq C^G$ and Systems 1 and 2 as previously defined. The following hold for $H_0^0(i,j,k,\ell)$ and $H_{1,2}^0(i,j,k,\ell)$, as defined in Definition \ref{def:H0_and_H12}  (with Assumption \ref{assum:deterministic} replaced by Assumption \ref{assum:deterministic_initial_block}).
    {\small
    \begin{align}
        H_0^0(i,j,k,\ell)
        & =
        \begin{cases}
            \left( \left\lceil\frac{i-k}{C^p}\right\rceil \left(\frac{1}{\mu_1} - \frac{1}{\mu_2}\right)  - \sum_{r=C^G}^\ell \left\lceil\frac{i-k-r}{C^p}\right\rceil \frac{1}{C^G \mu_2}\right)\cdot h_0, & \text{ if } \frac{1}{\mu_1} \leq \frac{1}{\mu_2},\\
            \left( \left\lceil\frac{i-k-\ell'}{C^p}\right\rceil \left(\frac{1}{\mu_1} - \frac{\ell'+1}{C^G \mu_2}\right)  - \sum_{r=\ell'+1}^{\ell} \left\lceil\frac{i-k-r}{C^p}\right\rceil \frac{1}{C^G \mu_2}\right)\cdot h_0, & \text{ if } \frac{1}{\mu_2} < \frac{1}{\mu_1} \leq \frac{\ell+1}{C^G \mu_2},\\
            \left\lceil\frac{i-\ell}{C^p}\right\rceil \left(\frac{1}{\mu_1} - \frac{\ell+1}{C^G \mu_2}\right) \cdot h_0, & \text{ if } \frac{1}{\mu_1} > \frac{\ell+1}{C^G \mu_2}.
        \end{cases} \label{eq:small_mu0_H0}\\
        H_{1,2}^0(i,j,k,\ell) &= b_\ell = \frac{h_1}{\mu_1} - \frac{\ell+1}{C^G} \frac{h_2}{\mu_2}, \label{eq:small_mu0_H12}
    \end{align}
    }
    as provided in \eqref{eq:def_b_ell}. Here, $\ell' \in \N$ is defined in \eqref{eq:def_ell_prime}. 
\end{proposition}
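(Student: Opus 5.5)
Under Assumptions \ref{assum:deterministic_initial_block} and \ref{assm:always-collab}, I would compare Systems 1 and 2 along their deterministic trajectories and compute the two cost differences directly.

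\textbf{Downstream term.} In System 1 the decision-maker's patient leaves Station 1 after $\frac{1}{\mu_1}$, at cost $\frac{h_1}{\mu_1}$. In System 2 this patient is the $(\ell+1)$-st pair at Station 2. Because of the FCFS structure with $C^G$ parallel GPs, the pair spends $\frac{\ell+1}{C^G\mu_2}$ there and incurs cost $\frac{\ell+1}{C^G}\frac{h_2}{\mu_2}$. The same expected sojourn time is used in the discussion preceding \eqref{eq:time_and_cost}, and I would take it as the deterministic duration. Condition \ref{modified:downstream_blocking} says every subsequent collaborative service starts without delay and lasts $\frac{1}{\mu_2}$. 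Every later patient therefore follows the same always-collaborative path in both systems, with identical Station 2 times. Their downstream costs cancel, which gives $H^0_{1,2}=b_\ell$.

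\textbf{Upstream term.} I would track when each NP returns to triage in the two systems, and how this shifts the completion times of the $i$ queued patients.

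1. The $k$ NPs currently at Station 1 return after $\frac{1}{\mu_1}$ in both systems. The $j-1$ triaging NPs behave identically in both systems.
2. In System 2, the initially queued collaborative pairs are delayed. The $r$-th pair, for $r=C^G,\dots,\ell$, needs $\frac{r+1}{C^G\mu_2}$ instead of the time it would need if collaboration were not blocked. The decision-maker's NP needs $\frac{1}{\mu_1}$ in System 1 versus $\frac{\ell+1}{C^G\mu_2}$ in System 2.
3. Each NP then serves a periodic sequence of triage-plus-service cycles. By \ref{modified:downstream_blocking}, the cycles are ordered so that the queue is dispatched round-robin among the $C^p$ NPs.
4. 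A time shift $\delta$ in an NP's return is inherited by each queued patient that NP later picks up. The upstream cost difference is therefore $h_0\,\delta$ times the number of queued patients assigned to that NP, which is essentially $\lceil (i-m)/C^p\rceil$ when the NP is the $(m+1)$-st to return.

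\textbf{Ordering of returns, by case.}
\begin{itemize}
\item If $\frac{1}{\mu_1}\le\frac{1}{\mu_2}$, the $k+1$ non-collaborative NPs return first. The decision-maker's shift $\frac{1}{\mu_1}-\frac{1}{\mu_2}$ acts on $\lceil\frac{i-k}{C^p}\rceil$ patients. The blocked pairs at positions $r\ge C^G$ each contribute a further relative shift $\frac{1}{C^G\mu_2}$ on $\lceil\frac{i-k-r}{C^p}\rceil$ patients, entering with a minus sign because System 1 has one fewer pair ahead of them. This gives the first line of \eqref{eq:small_mu0_H0}.
\item If $\frac{1}{\mu_2}<\frac{1}{\mu_1}\le\frac{\ell+1}{C^G\mu_2}$, the decision-maker's NP in System 1 slots in after the first $\ell'$ collaborative returns, where $\ell'$ is as in \eqref{eq:def_ell_prime}. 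This produces the $\ell'$-indexed expression, with the sum running only over the pairs that now return later.
\item If $\frac{1}{\mu_1}>\frac{\ell+1}{C^G\mu_2}$, all $\ell$ collaborative NPs return before it. Only one shift remains, acting on $\lceil\frac{i-\ell}{C^p}\rceil$ patients.
\end{itemize}

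\textbf{Main obstacle.} The hard part is this bookkeeping: determining, in each case, the exact return order and which NP picks up which queued patient. The ceiling indices have to match precisely, including the index offsets and the sign of the shifts from the relabelling of queued pairs between the two systems. I would handle it by indexing NPs by return time and verifying the rank permutation case by case, using \ref{modified:downstream_blocking} to guarantee that this order persists unchanged through all later cycles.
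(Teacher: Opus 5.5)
Your proposal follows essentially the same route as the paper: $H^0_{1,2}=b_\ell$ because every later patient follows an identical undelayed collaborative path in both systems. The upstream term $H^0_0$ is $h_0$ times a sum over NPs matched across the two systems by the order in which they return to triage; each matched pair contributes its return-time offset multiplied by the $\lceil (i-m)/C^p\rceil$ queued patients it triages under the persistent $C^p$-cycle, and the three cases are set by where $\frac{1}{\mu_1}$ falls among the collaborative departure times. Just make sure the ``inherited shift'' of your Step 4 is attached to these rank-matched pairs (as your closing remark intends) rather than to physical NPs, since the decision-maker's NP returns at a different rank in the two systems: in the first case the offset $\frac{1}{\mu_1}-\frac{1}{\mu_2}$ pairs $\text{NP}^*_{(1)}$ with a System 2 NP leaving Station 2 at time $\frac{1}{\mu_2}$, and each $-\frac{1}{C^G\mu_2}$ compares position $r$ in System 1 with position $r+1$ in System 2, not blocked with unblocked service as your Step 2 phrases it.
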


We note that any non-idling control policy under which corresponding NP pairs in the two systems take identical actions at each decision epoch equivalently, under which patients in the same position of the initial queues in the two systems receive the same type of service, yields the same expressions for $H_0^0$ in \eqref{eq:small_mu0_H0} and $H_{1,2}^0$ in \eqref{eq:small_mu0_H12}. This restriction is natural when evaluating $H_0^0$ and $H_{1,2}^0$ induced by differing a single action, and the equivalence itself follows from the fact that the triage service operates on a sufficiently slower time scale (Statement \ref{modified:downstream_blocking}). Consequently, without loss of generality, we assume that both systems operate under the \emph{always-collaborative policy $\pi^{co\ell}$}.

With \eqref{eq:small_mu0_H0} and \eqref{eq:small_mu0_H12} established, substituting them into \eqref{eq:H_decompose} yields the expression for $H^0$ in \eqref{eq:small_mu0_H} for $\ell \geq C^G$ under the small-$\mu_0$ assumption.
As before, we can further simplify the formulation by replacing ceiling terms of the form $\left\lceil\frac{i-s}{C^p}\right\rceil$, where $s$ represents $k,\ell$, or $k+r$ depending on the expressions in \eqref{eq:small_mu0_H0}, with a linear counterpart $\frac{i}{C^p}$. This leads to the linear approximation $H^0_{\text{Lin}}$ in \eqref{eq:small_mu0_H0}. 
This linearization entails implications analogous to those discussed previously:
\begin{itemize}
    \item Combined with another linear function $H^\infty$ (introduced later), this yields a final approximation that remains linear and thus monotone, again implying a threshold-type policy via \eqref{eq:def_a_Hl}. 
    \item The upstream and downstream cost differences resulting from a single action remain clearly identifiable in the expression. The intercept $b_\ell$ represents the downstream component, which depends on the downstream parameters $\mu_1$, $h_1$, $\mu_2$, and $h_2$, as well as the state variable $\ell$, capturing the severity of queueing for collaborative service. The remaining term in \eqref{eq:small_mu0_H_linear} corresponds to the upstream component, which is linear in the number of upstream patients $i$. Its slope $c_\ell$ is proportional to $h_0$ and the immediate service time difference $\frac{1}{\mu_1}-  \frac{\ell+1}{C^G\mu_2}$.
    \item Similarly, as $i$ (the threshold $R_1$) grows large, the discrepancy between \eqref{eq:small_mu0_H} and \eqref{eq:small_mu0_H_linear} diminishes, reinforcing the accuracy and tractability of the linear approximation.
\end{itemize}
Next, we compute $H^\infty$ under Case \ref{case:large_mu0}, where triage completes almost instantaneously. As an approximation, we consider the case when $\mu_0 = \infty$, which effectively reduces the system to a single-stage queue studied in depth in \citep{lu2026balancingindependentcollaborativeservice}. Under instantaneous triage, each NP performs a single service per patient and immediately proceeds to the next upon completion of the current, making another decision at that point. Accordingly, the state space collapses to three dimensions:
\begin{align*}
    \tilde{\X}_{red} := \Big\{(i,k,\ell)\in \Z_+^3\Big{|} i=0, k+\ell < C^p \text{ or } i \geq 0, k+\ell = C^p\Big\}.
\end{align*}
\noindent
We use tildes to denote the value functions and related quantities in this three-dimensional context.
Following notations in \citep{lu2026balancingindependentcollaborativeservice}, we define for $(i,k,\ell) \in \tilde{\X}_{red}$ with $k \geq 1$:
\begin{align}
    \tilde{D}(i,k,\ell) := \tilde{v}(i,k,\ell) - \tilde{v}(i,k-1,\ell+1). \label{def:D_no_triage}
\end{align}

For $(i,j,k,\ell) \in \X_D$ with $\ell \geq C^G$, we now seek to relate the sign of $D^\infty(i,j,k,\ell)$ to that of the reduced-model cost difference $\tilde{D}$ in \eqref{def:D_no_triage}, since $\tilde{D}$ admits an effective linear approximation $\tilde{H}$ (\citep{lu2026balancingindependentcollaborativeservice}). We leverage this connection by introducing the auxiliary function below:
{\small
\begin{align}
    G(i,j,k,\ell) &:= \min_{m=0,1, \ldots, j-1}\Big\{\tilde{v}\big(i, k+1+(j-1-m), \ell+m\big)\Big\} \nonumber \\
    &\quad \qquad - \min_{m=0,1, \ldots, j-1}\Big\{\tilde{v}\big(i, k+(j-1-m), \ell+1+m\big)\Big\}. \label{eq:diff_of_min}
\end{align}
}
Then under fast triage, the allocation of all $j$ NP–patient pairs to downstream services is determined virtually instantaneously, resulting in $D^\infty(i,j,k,\ell) \approx G(i,j,k,\ell)$.
This approximation reduces the problem of characterizing the sign of $G$. 
The following proposition describes how the sign of $G(i,j,k,\ell)$ evolves with increasing $i$, and establishes its connection with the sign of $\tilde{D}$ in the three-dimensional (infinite-triage-rate) model. The proof is provided in Appendix \ref{sec:proof_heuristic}.
\begin{proposition} \label{prop:large_mu0_D_sign}
    Consider $(i,j,k,\ell) \in \X_D$ where $\ell \geq C^G$. The following holds for $G(i,j,k,\ell)$.
    \begin{enumerate}
        \item \label{state:large_mu0_no_decision}
        When $i=0$, we have
        {\small
        \begin{align}
            G(0,j,k,\ell) = \left\{
            \begin{array}{lcl}
                \frac{h_1}{\mu_1} - \frac{(\ell+1) h_2}{C^G\mu_2}, & \text{if} & \frac{h_1}{\mu_1} \leq \frac{(\ell+1) h_2}{C^G\mu_2}, \\
                0, & \text{if} & \frac{(\ell+1) h_2}{C^G\mu_2} < \frac{h_1}{\mu_1} \leq \frac{(\ell+j) h_2}{C^G\mu_2}, \\
                \frac{h_1}{\mu_1} - \frac{(\ell+j) h_2}{C^G\mu_2}, & \text{if} & \frac{h_1}{\mu_1} > \frac{(\ell+j) h_2}{C^G\mu_2}.
            \end{array}
            \right. \label{eq:large_mu0_D_no_decision}
        \end{align}
        }
        \item \label{state:large_mu0_monotone}
        For $j+k+\ell  = C^p$, $G(i,j,k,\ell)$ is monotone non-increasing in $i$, i.e., for all $i \geq 0$,
        \begin{align}
            G(i,j,k,\ell) - G(i+1,j,k,\ell)\geq 0. \label{eq:large_mu0_monotone}
        \end{align}
        Moreover, it eventually becomes negative as $i$ increases.
        \item \label{state:large_mu0_neg}
        If either $G(i,j,k,\ell) < 0$ or $\tilde{D}(i,k+j,\ell) < 0$, then $G(i,j,k,\ell) = \tilde{D}(i,k+j,\ell) < 0$.
        \item \label{state:large_mu0_pos}
        If either $G(i,j,k,\ell) > 0$ or $\tilde{D}(i,k+1,\ell+j-1) > 0$, then $G(i,j,k,\ell) = \tilde{D}(i,k+1,\ell+j-1) > 0$.
    \end{enumerate}
\end{proposition}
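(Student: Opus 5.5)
The plan is to work entirely inside the reduced three-dimensional model of \citep{lu2026balancingindependentcollaborativeservice}. Write $f(m) := \tilde v(i, k+1+(j-1-m), \ell+m)$ and $g(m) := \tilde v(i, k+(j-1-m), \ell+1+m)$ for $m=0,\dots,j-1$. Then $g(m)=f(m+1)$ whenever $m+1\le j-1$. So both minima in \eqref{eq:diff_of_min} are minima of one sequence $F(m):=\tilde v(i,k+j-m,\ell+m)$, $m=0,\dots,j$, over two overlapping windows:
\begin{align*}
G(i,j,k,\ell)=\min_{0\le m\le j-1}F(m)-\min_{1\le m\le j}F(m).
\end{align*}
Consecutive differences are exactly $F(m)-F(m+1)=\tilde D(i,k+j-m,\ell+m)$. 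The main structural input I will import from \citep{lu2026balancingindependentcollaborativeservice} is that $\tilde D(i,\cdot,\cdot)$ is monotone non-increasing along the direction moving a server from independent to collaborative service, i.e.\ in $m$. This is the convexity/submodularity of $\tilde v$ along that line. It makes $F$ convex in $m$, so $F$ is unimodal.

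\textbf{Statements 3 and 4.} These follow from unimodality. If $\tilde D(i,k+j,\ell)=F(0)-F(1)<0$, then every later difference is also negative, so $F$ is increasing. Both minima are then attained at their left endpoints, giving $G=F(0)-F(1)=\tilde D(i,k+j,\ell)<0$. Conversely, if $G<0$, then the minimum over $\{0,\dots,j-1\}$ is strictly below every $F(m)$ with $m\ge1$. This forces $F(0)<F(1)$, and we are back in the previous case. Statement 4 is symmetric: positivity of the last difference $F(j-1)-F(j)=\tilde D(i,k+1,\ell+j-1)$ makes all differences positive, so $F$ is decreasing and $G=F(j-1)-F(j)$. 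If neither end condition holds, the minimizer lies in the common window and $G=0$. That is exactly the middle case of Statement 1.

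\textbf{Statement 1.} At $i=0$ no further decisions arise, so $\tilde v(0,k',\ell')$ is explicit. Independent jobs each cost $h_1/\mu_1$. Collaborative jobs form an $M/M/C^G$ clearing queue; the $r$-th job in line has expected remaining cost $\max\{1,r/C^G\}h_2/\mu_2$, which is consistent with \eqref{eq:time_and_cost}. Hence
\begin{align*}
F(m)-F(m+1)=\frac{h_1}{\mu_1}-\frac{\ell+m+1}{C^G}\frac{h_2}{\mu_2},
\end{align*}
using $\ell\ge C^G$. This is decreasing in $m$, which confirms convexity directly here. The three regimes of \eqref{eq:large_mu0_D_no_decision} then follow by checking the signs at $m=0$ and $m=j-1$, via the previous paragraph.

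\textbf{Statement 2 (main obstacle).} The same window identity gives
\begin{align*}
G=\min\Big\{\tilde D(i,k+j,\ell),\,0\Big\}\ \text{or}\ \max\Big\{\tilde D(i,k+1,\ell+j-1),0\Big\},
\end{align*}
depending on the case, so $G$ is a clipped version of end-point $\tilde D$ values. Monotonicity in $i$ therefore reduces to monotonicity of $\tilde D(i,\cdot,\cdot)$ in $i$ along the full-occupancy face. I would first try to import this from the threshold/monotonicity results of \citep{lu2026balancingindependentcollaborativeservice}. If only eventual negativity is available there, I would prove the monotonicity by induction on $i$ using the reduced optimality equations, with a sample-path coupling comparing $i$ and $i+1$ waiting jobs. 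Eventual negativity is easier: with $\ell\ge C^G$, each extra waiting job raises the holding-cost gap between the two choices by roughly $h_0/(C^G\mu_2)$. This matches the slope $c'<0$ in $H^\infty$, so $\tilde D(i,k+j,\ell)\to-\infty$.

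The step I expect to be hardest is securing the needed convexity and $i$-monotonicity of $\tilde D$ for general $i$, if the cited work does not state them in exactly this form.
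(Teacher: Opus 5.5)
Your proposal is correct, and it rests on the same external inputs as the paper's proof, organized differently. Your convexity of $F$ is Proposition 3.11 of the single-stage paper: $\tilde D$ does not increase when a server moves from independent to collaborative service. The $i$-monotonicity and eventual negativity of $\tilde D$ for $\ell\ge C^G$ are its Propositions 3.5 and 3.7. Your $M/M/C^G$ clearing computation rederives the boundary formula $\tilde D(0,k,\ell)=\frac{h_1}{\mu_1}-\frac{(\ell+1)h_2}{C^G\mu_2}$, which the paper quotes from Lemma 4.4 there.

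The paper proves each statement separately, by case analysis on where the two minima defining $G$ are attained. For Statement 2 it splits into three cases. In the main one it bounds $G(i)-G(i+1)$ below by $\tilde D(i,k+j,\ell)-\tilde D(i+1,k+1+(j-1-m'),\ell+m')$, lowers $i+1$ to $i$ via Propositions 3.5/3.7, and telescopes with Proposition 3.11.

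Your reindexing into one convex sequence over two windows shifted by one instead gives a single closed form:
\[
G=\min\{\tilde D(i,k+j,\ell),0\}+\max\{\tilde D(i,k+1,\ell+j-1),0\}.
\]
The two clipped terms are never both nonzero, because $\tilde D(i,k+j,\ell)\ge\tilde D(i,k+1,\ell+j-1)$. Write it as this sum rather than as ``one or the other depending on the case''; the sum form automatically handles the case switching between $i$ and $i+1$. With it, Statements 1, 3 and 4 are immediate, and Statement 2 needs $i$-monotonicity only at the two endpoint states. This is cleaner and more uniform than the paper's repeated min-bounding arguments.

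Two cautions. Your slope heuristic is not a proof of eventual negativity, so cite Propositions 3.5/3.7 for it, as the paper does. Your fallback induction/coupling proof of $i$-monotonicity is unnecessary given those results; it would also be the genuinely hard part if you actually had to supply it.
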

Note if $i$ satisfies both $\tilde{D}(i,k+j,\ell) \geq 0$ and $\tilde{D}(i,k+1,\ell+j-1) \leq 0$, then Statements \ref{state:large_mu0_neg} and \ref{state:large_mu0_pos} together (via the contrapositive) yield $G(i,j,k,\ell) = 0$. Such a value of $i$ may indeed exist, since $\tilde{D}(i,k+j,\ell) \geq \tilde{D}(i,k+1,\ell+j-1)$ (Proposition 3.11 in \citep{lu2026balancingindependentcollaborativeservice}). 
Consequently, the sign behavior of $G$ (and hence that of $D^\infty$ since $D^\infty \approx G$) is fully characterized through its connection to $\tilde{D}$ in the three-dimensional model by Proposition \ref{prop:large_mu0_D_sign}. 
Building on this, we now introduce our suggestion of $H^\infty$ under Case \ref{case:large_mu0}, which is divided into two subcases; with $i =0$ and $i \geq 1$.

\begin{enumerate} [label= \textbf{Subcase} (\alph*):, leftmargin=3\parindent]
    \item For $i=0$, the value of $G$ is explicitly given by \eqref{eq:large_mu0_D_no_decision}, and we define $H^\infty$ to match:
    {\small
    \begin{align}
        H^\infty(0,j,k,\ell) = \left\{
        \begin{array}{lcl}
            \frac{h_1}{\mu_1} - \frac{(\ell+1) h_2}{C^G\mu_2}, & \text{if} & \frac{h_1}{\mu_1} \leq \frac{(\ell+1) h_2}{C^G\mu_2}, \\
            0, & \text{if} & \frac{(\ell+1) h_2}{C^G\mu_2} < \frac{h_1}{\mu_1} \leq \frac{(\ell+j) h_2}{C^G\mu_2}, \\
            \frac{h_1}{\mu_1} - \frac{(\ell+j) h_2}{C^G\mu_2}, & \text{if} & \frac{h_1}{\mu_1} > \frac{(\ell+j) h_2}{C^G\mu_2}.
        \end{array}
        \right. \label{eq:large_mu0_H_no_decision}
    \end{align}
    }
    \item For $i \geq 1$, Statement \ref{state:large_mu0_monotone} in Proposition \ref{prop:large_mu0_D_sign} confirms that $G(i,j,k,\ell)$ is monotone non-increasing in $i$ and eventually becomes negative, suggesting a threshold-type policy. 
    Moreover, Statement \ref{state:large_mu0_neg} establishes that $G(i,j,k,\ell)$ coincides with $\tilde{D}(i,k+j,\ell)$ whenever either is negative. 
    Since our interest lies in the threshold at or beyond which the sign of $D^\infty \approx G$ stabilizes, we may, without loss of generality, assume that $G(i,j,k,\ell) = \tilde{D}(i,k+j,\ell)$ holds for all $i \geq 0$, rather than only for values where $G$ is negative.  
    
    \citet{lu2026balancingindependentcollaborativeservice} propose a linear approximation $\tilde{H}(i, k+j, \ell)$ of $\tilde{D}(i, k+j, \ell)$ with strong analytical justification and empirical accuracy (e.g., Lemma 4.5): 
    \begin{align*}
        \tilde{H}(i,k+j,\ell) = (i - y_\ell)c' + b', 
    \end{align*}
   where $b'$ and $c'$ are defined in \eqref{def:b'_and_c'}, and $y_\ell$ is given in \eqref{def:y}. Using the equivalence that $D^\infty(i,j,k,\ell) \approx G(i,j,k,\ell) = \tilde{D}(i,k+j,\ell)$, we let $H^{\infty}(i, j, k, \ell) = \tilde{H}(i,k+j,\ell)$, leading to the expression in \eqref{eq:large_mu0_H_general}.
    This construction also admits the decomposition in \eqref{eq:H_decompose}, capturing the upstream and downstream components of approximation.
\end{enumerate}

Building on the previous analyses, we now summarize the definitions of $H(i,j,k,\ell)$, where $(i,j,k,\ell) \in \X_D$ with $\ell \geq C^G$, distinguishing between the cases $i = 0$ and $i \geq 1$:
\begin{itemize}
    \item When $i = 0$ (second case in \eqref{eq:def_H_base}), the expression \eqref{eq:small_mu0_H} gives $H^0(0,j,k,\ell) = b_\ell$. Combining this with the expression of $H^\infty$ in \eqref{eq:large_mu0_H_no_decision}, the weighted form in \eqref{eq:convex_comb} becomes:
    {\small
    \begin{align*}
        &H(0,j,k,\ell) =
        \begin{cases}
            b_\ell  & \text{ if } b_\ell \leq 0,\\
            (1-w_{k,\ell}) \cdot b_\ell & \text{ if } b_\ell > 0 \text{ and } b_{\ell+j-1} \leq 0,\\
            w_{k,\ell} \cdot b_{\ell+j-1} + (1-w_{k,\ell}) \cdot b_{\ell} & \text{ if } b_{\ell+j-1} > 0.\\
        \end{cases} 
    \end{align*}
    }
   However, since only the sign of $H$ is of interest, this simplifies to $H(0,j,k,\ell) = b_\ell$ by
    leveraging the fact that $b_{\ell+j-1} \leq b_\ell$.
    This coincides with Proposition \ref{prop:noncollab}, which implies that when $b_\ell \leq 0$ (and $c_\ell \leq 0$), we should have $H \leq 0$, consistent with $D\leq 0$.
    \item When $i \geq 1$:
    \begin{itemize}
        \item If $c_\ell \leq 0$ and $b_\ell \leq 0$ (the first case in \eqref{def:H_linear}), we set $H = -1 \leq 0$, aligning with $D \leq 0$.
        \item If either $b_\ell > 0$ or $c_\ell > 0$, we apply the piecewise linear approximation $H^0$ from \eqref{eq:small_mu0_H} and combine it with $H^\infty$ in \eqref{eq:large_mu0_H_general} using \eqref{eq:convex_comb}, yielding the final definition in \eqref{eq:def_H}.
    \end{itemize}
\end{itemize}
The linear approximation $H_{\text{Lin}}$, defined in \eqref{eq:def_H_linear} follows a similar construction, with $H^0$ replaced by its linearized counterpart $H^0_{\text{Lin}}$.

\section{Numerical Analysis} \label{sec:numerical}
This section evaluates the accuracy and robustness of the proposed heuristics through a numerical study. The parameter settings are summarized in Table \ref{table:params}, where we test all configurations that satisfy Assumption \ref{assm:preferred_collab}. Since optimal policies are invariant under positive scaling of holding costs (when service rates are fixed), we without loss of generality, fix $h_1 = 1$. Similarly, we normalize $\mu_1 = 4$ per hour, corresponding to an average non-collaborative service time of 15 minutes. The collaborative service rate $\mu_2$ varies from 0.4 to 2.5 times of $\mu_1$ to reflect a range of practical scenarios. Although triage is typically faster than downstream services, we allow $\mu_0$ to vary between $\frac{1}{4}\mu_1$ to $4\mu_1$ to illustrate the effect of the weighting parameter $w_{k,\ell}$ in \eqref{eq:def_w}.
{\small
\begin{table}[htbp]
\caption{Parameter configurations for numerical test with $h_1=1$ and $\mu_1 = 4$.}
\label{table:params}
\centering
\begin{tabular}{cc}
\hline
\multicolumn{1}{c}{\textbf{Parameters}} & \multicolumn{1}{c}{\textbf{Values}} \\ \hline
$h_0$                               & 0.05, 0.1, 0.2, 0.5, 1  \\
$h_2$                               & 0.1, 0.2, 0.5, 1, 1.5           \\
$\mu_0$                              & 1, 2, 4, 8, 16    \\
$\mu_2$                              & 1.6, 2, 3.2, 4, 5, 8, 10    \\ \hline    
\end{tabular}
\end{table}
}
We refer to our proposed heuristic policy induced by $H$ (resp. $H_{\text{Lin}}$) through \eqref{eq:def_a_H} (resp. \eqref{eq:def_a_Hl}) as $\pi'$ (resp. $\pi'_{\text{Lin}}$), to distinguish from baseline policies commonly adopted in practice, defined as follows for all $(i,j,k,\ell) \in \X_D$:
\begin{enumerate}
[leftmargin=2.5cm, label=\textbf{Policy} $\pi_\arabic*$.]
    \item Always select non-collaborative service: $a=0$ for all $i$.
    \item Choose collaborative care if and only if the upstream queue length is below the threshold $10$: $a=\mathbb{I}\{i < 10\}$.
    \item Always select collaborative service: $a=1$ for all $i$.
    \item Seek collaboration only when collaborative service can begin immediately without delay: $a = \mathbb{I}\{\ell < C^G\}$.
\end{enumerate}
For states $(i_0,j_0,k_0,\ell_0) \in \X_D$ with $i_0=20$, we evaluate the performance usingthe relative error with respect to the optimal value:
\begin{align*}
    err = \frac{v^{\pi}(i_0,j_0,k_0,\ell_0) - v(i_0,j_0,k_0,\ell_0)}{v(i_0,j_0,k_0,\ell_0)}, 
\end{align*}where $v^{\pi}$ denotes the value function under policy $\pi$, and $v$ is the optimal value function. 

The numerical results, summarized in Tables \ref{table:cmp_state20_larger_mu1}-\ref{table:cmp_state20_larger_mu2}, demonstrate that the proposed heuristic policies $\pi'$ and $\pi'_{\text{Lin}}$ achieve near-optimal performance. Over a wide range of parameters, they incur costs within 5\% of the optimal, with an average error below 0.2\%, highlighting strong accuracy and robustness. In contrast, the benchmark heuristics $\pi_m$ for $m = 1,2, \ldots 4$, exhibit considerable sensitivity to parameter variations and can exceed the optimal cost by more than 100\%. Notably, the simplified heuristic $\pi'_{\text{Lin}}$ performs comparably to $\pi'$ and is slightly better in most cases, underscoring the practical effectiveness of the linear approximation despite its simplicity.

{\small
\begin{table}
    \caption{Comparison of relative errors $(\%)$ for states $(20,j_0,k_0,\ell_0)$ when $\mu_1\geq\mu_2$.}
    \centering
    \renewcommand{\arraystretch}{1.0}  
    \label{table:cmp_state20_larger_mu1}
    \begin{tabular}{|c|r|r|r|r|r|r|r|}
        \hline
        \multicolumn{2}{|c|}{} & \multicolumn{1}{c|}{$C_1 = 2$} & \multicolumn{2}{c|}{$C_1 = 3$} & \multicolumn{3}{c|}{$C_1 = 4$} \\ \cline{3-8}
        \multicolumn{2}{|c|}{} & $C_2 = 1$ & $C_2 = 1$ & $C_2 = 2$ & $C_2 = 1$ & $C_2 = 2$ & $C_2 = 3$ \\
        \hline

        \multirow{3}{*}{\textbf{Policy $\pi'$}} & Max error & 0.25 & 0.35 & 0.43 & 0.77 & 0.31 & 1.15 \\
                                                 & Avg error & 0.01 & 0.03 & 0.03 & 0.12 & 0.05 & 0.06 \\
                                                 & Std error & 0.03 & 0.05 & 0.06 & 0.12 & 0.06 & 0.13 \\
        \hline

        \multirow{3}{*}{\textbf{Policy $\pi'_{\text{Lin}}$}} & Max error & 0.25 & 0.35 & 0.36 & 0.74 & 0.57 & 0.91 \\
                                                 & Avg error & 0.01 & 0.03 & 0.03 & 0.11 & 0.04 & 0.05 \\
                                                 & Std error & 0.03 & 0.04 & 0.05 & 0.11 & 0.06 & 0.10 \\
        \hline
        
        \multirow{3}{*}{\textbf{Policy $\pi_1$}}  & Max error & 94.95 & 81.77 & 213.14 & 76.24 & 190.58 & 300.50 \\
                                                 & Avg error & 12.81 & 12.26 & 26.17 & 11.34 & 25.86 & 36.69 \\
                                                 & Std error & 17.25 & 14.85 & 34.40 & 12.86 & 31.87 & 45.38 \\
        \hline

        \multirow{3}{*}{\textbf{Policy $\pi_2$}}  & Max error & 43.81 & 57.54 & 111.51 & 64.09 & 96.36 & 164.34 \\
                                                 & Avg error & 12.10 & 16.41 & 12.55 & 20.41 & 12.49 & 16.04 \\
                                                 & Std error & 8.50  & 9.94  & 15.88 & 12.44 & 13.25 & 21.94 \\
        \hline

        \multirow{3}{*}{\textbf{Policy $\pi_3$}}  & Max error & 246.79 & 376.89 & 145.43 & 487.64 & 200.36 & 107.05 \\
                                                 & Avg error & 41.17  & 68.07  & 14.88  & 92.05  & 23.32  & 7.83 \\
                                                 & Std error & 45.08  & 64.65  & 23.33  & 78.93  & 31.26  & 15.28 \\
        \hline

        \multirow{3}{*}{\textbf{Policy $\pi_4$}}  & Max error & 29.83  & 25.28  & 50.01  & 22.73  & 64.01  & 55.17 \\
                                                 & Avg error & 5.32   & 3.52   & 6.77   & 2.82   & 6.17   & 6.56 \\
                                                 & Std error & 6.65   & 4.47   & 8.83   & 3.85   & 8.59   & 8.82 \\
        \hline
    \end{tabular}
\end{table}

\begin{table}[htbp]
    \caption{Comparison of relative errors $(\%)$ for states $(20,j_0,k_0,\ell_0)$ when $\mu_1<\mu_2$.}
    \centering
    \renewcommand{\arraystretch}{1.0}
    \label{table:cmp_state20_larger_mu2}
    \begin{tabular}{|c|r|r|r|r|r|r|r|}
        \hline
        \multicolumn{2}{|c|}{} & \multicolumn{1}{c|}{$C_1 = 2$} & \multicolumn{2}{c|}{$C_1 = 3$} & \multicolumn{3}{c|}{$C_1 = 4$} \\ \cline{3-8}
        \multicolumn{2}{|c|}{} & $C_2 = 1$ & $C_2 = 1$ & $C_2 = 2$ & $C_2 = 1$ & $C_2 = 2$ & $C_2 = 3$ \\
        \hline

        \multirow{3}{*}{\textbf{Policy $\pi'$}} & Max error & 2.01 & 1.82 & 4.85 & 2.30 & 2.70 & 2.66 \\
                                                 & Avg error & 0.08 & 0.10 & 0.07 & 0.17 & 0.09 & 0.03 \\
                                                 & Std error & 0.26 & 0.21 & 0.36 & 0.31 & 0.29 & 0.20 \\
        \hline

        \multirow{3}{*}{\textbf{Policy $\pi'_{\text{Lin}}$}} & Max error & 2.01 & 1.77 & 4.85 & 2.28 & 2.70 & 2.66 \\
                                                 & Avg error & 0.08 & 0.09 & 0.06 & 0.16 & 0.09 & 0.03 \\
                                                 & Std error & 0.26 & 0.21 & 0.35 & 0.30 & 0.28 & 0.19 \\
        \hline

        \multirow{3}{*}{\textbf{Policy $\pi_1$}}  & Max error & 348.39 & 307.42 & 594.06 & 275.73 & 587.44 & 760.19 \\
                                                 & Avg error & 49.32  & 43.57  & 73.53  & 38.68  & 71.19  & 85.45 \\
                                                 & Std error & 48.46  & 44.50  & 76.21  & 40.42  & 74.95  & 90.43 \\
        \hline

        \multirow{3}{*}{\textbf{Policy $\pi_2$}}  & Max error & 193.53 & 165.53 & 337.53 & 144.66 & 332.50 & 437.46 \\
                                                 & Avg error & 29.86  & 26.59  & 41.28  & 25.07  & 37.68  & 46.19 \\
                                                 & Std error & 25.12  & 20.63  & 40.45  & 17.81  & 37.80  & 47.60 \\
        \hline

        \multirow{3}{*}{\textbf{Policy $\pi_3$}}  & Max error & 53.24  & 120.31 & 18.32  & 189.08 & 47.19  & 8.08 \\
                                                 & Avg error & 5.64   & 15.90  & 0.84   & 28.01  & 3.14   & 0.20 \\
                                                 & Std error & 9.87   & 22.88  & 2.54   & 36.36  & 7.13   & 0.89 \\
        \hline

        \multirow{3}{*}{\textbf{Policy $\pi_4$}}  & Max error & 88.41  & 116.64 & 92.42  & 120.02 & 151.36 & 78.80 \\
                                                 & Avg error & 5.33   & 7.19   & 5.29   & 7.79   & 8.55   & 3.79 \\
                                                 & Std error & 11.46  & 15.17  & 10.72  & 15.99  & 17.09  & 8.23 \\
        \hline
    \end{tabular}
\end{table}
}
For ease of terminology, we say that a policy is of \textbf{collaborative (non-collaborative) threshold type with threshold} $T$ if it selects collaborative (non-collaborative) care when the initial upstream queue length is below $T$, and chooses the opposite action otherwise. Clearly, each $\pi_m$ for $m=1,2, \ldots, 4$, falls into this category with a collaborative threshold $T_j$, where $T_1 = 0$, $T_2 = 10$, $T_3 = \infty$ are constants independent of system parameters or state variables. In contrast, $T_4$ depends on the state variable $\ell$: $T_4 = \infty$ if $\ell < C^G$, and $T_4 = 0$ otherwise. On the other hand, neither $\pi'$ nor $\pi'_{\text{Lin}}$ can be fully captured by this threshold-type classification. For example, when $c_\ell = \frac{1}{\mu_1} - \frac{\ell+1}{C^G \mu_2} > 0$, the policy $\pi'$ induced by $H$ via \eqref{eq:def_a_H} may not be monotone in $i$, as shown in the second case of \eqref{eq:def_H}. While $H_{\text{Lin}}$ is linear and thus induces a monotone policy, whether it corresponds to a collaborative or non-collaborative threshold type depends on the sign of its slope $\tilde{c}_{k,\ell}$. In other regions of the parameter and state spaces, both $\pi'$ and $\pi'_{\text{Lin}}$ do conform to the collaborative threshold type, with thresholds that may be finite, zero, or infinite.

An immediate observation is that performance tends to improve when $\mu_1 \geq \mu_2$ compared to the case where $\mu_1 < \mu_2$. This is largely because, under $\mu_1 \geq \mu_2$, the structure of the optimal policy becomes more predictable: there exists a threshold (possibly infinite) beyond which non-collaborative service is always preferred. In other words, the sign of the cost difference $D$ defined in \eqref{def:D} are known for sufficiently large $i$. 
In particular, when $\mu_1 > \mu_2$, Statement \ref{state:fast_noncollab} in Theorem \ref{thm:lowcost_noncollab} confirms that such threshold is finite. Although this does not imply that the optimal control is globally monotone in $i$ (recall $D$ may not be monotone non-increasing), our numerical results consistently show that the optimal policy is in fact of collaborative threshold type. 
When $\mu_1 = \mu_2$, similar structural insights hold for $\ell \geq C^G$ (\ref{state:equal_rate_queue_in_collab} in Theorem \ref{thm:lowcost_collab}), while for $\ell < C^G$, Statement \ref{state:equal_rate_no_queue_in_collab} 
shows that the optimal policy is of collaborative threshold type with an infinite threshold. 

Although the optimal control under $\mu_1 \geq \mu_2$ often exhibits a collaborative threshold-type structure, the threshold is highly sensitive to whether collaborative service can begin immediately ($\ell < C^G$) or is delayed by downstream blocking ($\ell\geq C^G$). 
In the latter case, the downstream blocking prolongs the time required for an NP to return to triage compared to the non-collaborative route (Station 1). This additional delay amplifies the upstream holding cost incurred by upstream waiting patients, making non-collaboration more favorable even at smaller upstream queue lengths. Consequently, the threshold beyond which collaboration is no longer optimal is expected to be lower when $\ell \geq C^G$ than when $\ell < C^G$. 
Accounting for this distinction, Policy $\pi_4$ yields the smallest error among the four alternative heuristics. However, it imposes extreme rules by avoiding both idling GPs and delayed service by assigning threshold $T = \infty$ when $\ell < C^G$ and $T = 0$ when $\ell \geq C^G$.

When $\mu_1 < \mu_2$ and $\ell < C^G$, Statement \ref{state:fast_collab_no_queue_in_collab} in Theorem \ref{thm:lowcost_collab} implies that an optimal decision-maker always seeks collaborative care. This aligns with the decisions made by Policies $\pi_3$ and $\pi_4$, explaining their superior performance compared to the other two benchmarks. The same reasoning accounts for the observed performance improvement from $\pi_1$ to $\pi_3$, as these policies increasingly align with the optimal action $a=1$ when $\ell < C^G$. 
In contrast, when $\ell \geq C^G$, the structure of the optimal policy becomes more complex, as discussed in Section \ref{sec:main_results}, due to the nuanced interplay between upstream and downstream service rates. Nonetheless, the weighted convex combination in \eqref{eq:convex_comb} effectively captures the relative contributions of the two approximations corresponding to Cases \ref{case:small_mu0} and \ref{case:large_mu0} outlined in Section \ref{sec:H_queue_in_collab} and consistently yields accurate heuristic decisions.


\section{Conclusion} \label{sec:conclusions}
To address an operational decision-making problem faced by MSPs in clinical settings, this work studied a two-stage queueing system modeled as an MDP with holding costs, where the objective is to clear all jobs and return the system to a normative state. After an initial screening, NPs must decide whether to serve each patient independently or in collaboration with a GP. Drawing on both analytical insights and empirical observations, we developed simple yet effective heuristics to guide practical decision making. A comprehensive numerical study demonstrates that the proposed heuristics are both accurate and robust, consistently outperforming existing benchmark approaches.

\bibliographystyle{unsrtnat}
\bibliography{policy_ref}

@misc{lu2026balancingindependentcollaborativeservice,
      title={Balancing Independent and Collaborative Service}, 
      author={Shuwen Lu and Mark E. Lewis and Jamol Pender},
      year={2026},
      eprint={2601.13586},
      archivePrefix={arXiv},
      primaryClass={math.OC},
      url={https://arxiv.org/abs/2601.13586}, 
}

@misc{lu2026controlpoliciestwostagequeueing,
      title={Control policies for a two-stage queueing system with parallel and single server options}, 
      author={Shuwen Lu and Jamol Pender and Mark E. Lewis},
      year={2026},
      eprint={2601.13576},
      archivePrefix={arXiv},
      primaryClass={math.OC},
      url={https://arxiv.org/abs/2601.13576}, 
}

@article{loeb2020departmental,
  title={Departmental experience and lessons learned with accelerated introduction of telemedicine during the COVID-19 crisis},
  author={Loeb, Alexander E and Rao, Sandesh S and Ficke, James R and Morris, Carol D and Riley III, Lee H and Levin, Adam S},
  journal={The Journal of the American Academy of Orthopaedic Surgeons},
  year={2020},
  publisher={Wolters Kluwer Health}
}

@article{calton2019top,
  title={Top ten tips palliative care clinicians should know about telepalliative care},
  author={Calton, Brook Anne and Rabow, Michael W and Branagan, Linda and Dionne-Odom, James Nicholas and Parker Oliver, Debra and Bakitas, Marie A and Fratkin, Michael D and Lustbader, Dana and Jones, Christopher A and Ritchie, Christine S},
  journal={Journal of palliative medicine},
  volume={22},
  number={8},
  pages={981--985},
  year={2019},
  publisher={Mary Ann Liebert, Inc., publishers 140 Huguenot Street, 3rd Floor New~…}
}

@article{contreras2020telemedicine,
  title={Telemedicine: patient-provider clinical engagement during the COVID-19 pandemic and beyond},
  author={Contreras, Carlo M and Metzger, Gregory A and Beane, Joal D and Dedhia, Priya H and Ejaz, Aslam and Pawlik, Timothy M},
  journal={Journal of Gastrointestinal Surgery},
  volume={24},
  number={7},
  pages={1692--1697},
  year={2020},
  publisher={Springer}
}

@article{calton2020telemedicine,
  title={Telemedicine in the time of coronavirus},
  author={Calton, Brook and Abedini, Nauzley and Fratkin, Michael},
  journal={Journal of Pain and Symptom Management},
  volume={60},
  number={1},
  pages={e12--e14},
  year={2020},
  publisher={Elsevier}
}

@inproceedings{lu2024physician,
  title={Physician Staffing in Telemedicine: A Simulation-Based Approach for a Network of CVS Minute Clinics},
  author={Lu, Shuwen and Lewis, Mark E and Pender, Jamol},
  booktitle={2024 Winter Simulation Conference (WSC)},
  pages={870--881},
  year={2024},
  organization={IEEE}
}

@book{puterman2014markov,
  title={Markov decision processes: discrete stochastic dynamic programming},
  author={Puterman, Martin L},
  year={2014},
  publisher={John Wiley \& Sons}
}

@article{greenglass2003reactions,
  title={Reactions to increased workload: Effects on professional efficacy of nurses},
  author={Greenglass, Esther R and Burke, Ronald J and Moore, Kathleen A},
  journal={Applied psychology},
  volume={52},
  number={4},
  pages={580--597},
  year={2003},
  publisher={Wiley Online Library}
}

@article{lang2004nurse,
  title={Nurse--patient ratios: a systematic review on the effects of nurse staffing on patient, nurse employee, and hospital outcomes},
  author={Lang, Thomas A and Hodge, Margaret and Olson, Valerie and Romano, Patrick S and Kravitz, Richard L},
  journal={JONA: The Journal of Nursing Administration},
  volume={34},
  number={7},
  pages={326--337},
  year={2004},
  publisher={LWW}
}

@article{massey1984operator,
  title={An operator-analytic approach to the {J}ackson network},
  author={Massey, William A},
  journal={Journal of Applied Probability},
  volume={21},
  number={2},
  pages={379--393},
  year={1984},
  publisher={Cambridge University Press}
}

@article{massey1986family,
  title={A family of bounds for the transient behavior of a {J}ackson network},
  author={Massey, William A},
  journal={Journal of applied probability},
  volume={23},
  number={2},
  pages={543--549},
  year={1986},
  publisher={Cambridge University Press}
}

@article{massey1987calculating,
  title={Calculating exit times for series {J}ackson networks},
  author={Massey, William A},
  journal={Journal of applied probability},
  volume={24},
  number={1},
  pages={226--234},
  year={1987},
  publisher={Cambridge University Press}
}

@article{badian2021optimal,
  title={Optimal control policies for an M/M/1 queue with a removable server and dynamic service rates},
  author={Badian-Pessot, Pamela and Lewis, Mark E and Down, Douglas G},
  journal={Probability in the Engineering and Informational Sciences},
  volume={35},
  number={2},
  pages={189--209},
  year={2021},
  publisher={Cambridge University Press}
}

@article{liu1992optimal,
  title={On optimal polling policies},
  author={Liu, Zhen and Nain, Philippe and Towsley, Don},
  journal={Queueing Systems},
  volume={11},
  number={1},
  pages={59--83},
  year={1992},
  publisher={Springer}
}

@article{nain1994optimal,
  title={Optimal scheduling in a machine with stochastic varying processing rate},
  author={Nain, Philippe and Towsley, Don},
  journal={IEEE Transactions on Automatic Control},
  volume={39},
  number={9},
  pages={1853--1855},
  year={1994},
  publisher={IEEE}
}

@article{liu1995sample,
  title={Sample path methods in the control of queues},
  author={Liu, Zhen and Nain, Philippe and Towsley, Don},
  journal={Queueing Systems},
  volume={21},
  number={3},
  pages={293--335},
  year={1995},
  publisher={Springer}
}

@article{paschalidis2000congestion,
  title={Congestion-dependent pricing of network services},
  author={Paschalidis, I Ch and Tsitsiklis, John N},
  journal={IEEE/ACM transactions on networking},
  volume={8},
  number={2},
  pages={171--184},
  year={2000},
  publisher={IEEE}
}

@article{koole1998structural,
  title={Structural results for the control of queueing systems using event-based dynamic programming},
  author={Koole, Ger},
  journal={Queueing systems},
  volume={30},
  number={3},
  pages={323--339},
  year={1998},
  publisher={Springer}
}

@article{chong2018two,
  title={Two-class routing with admission control and strict priorities},
  author={Chong, Kenneth C and Henderson, Shane G and Lewis, Mark E},
  journal={Probability in the Engineering and Informational Sciences},
  volume={32},
  number={2},
  pages={163--178},
  year={2018},
  publisher={Cambridge University Press}
}

@article{yoon2004optimal,
  title={Optimal pricing and admission control in a queueing system with periodically varying parameters},
  author={Yoon, Seunghwan and Lewis, Mark E},
  journal={Queueing Systems},
  volume={47},
  number={3},
  pages={177--199},
  year={2004},
  publisher={Springer}
}

@article{kumar2013dynamic,
  title={Dynamic service rate control for a single-server queue with Markov-modulated arrivals},
  author={Kumar, Ravi and Lewis, Mark E and Topaloglu, Huseyin},
  journal={Naval Research Logistics (NRL)},
  volume={60},
  number={8},
  pages={661--677},
  year={2013},
  publisher={Wiley Online Library}
}

@article{feinberg2007optimality,
  title={Optimality inequalities for average cost Markov decision processes and the stochastic cash balance problem},
  author={Feinberg, Eugene A and Lewis, Mark E},
  journal={Mathematics of Operations Research},
  volume={32},
  number={4},
  pages={769--783},
  year={2007},
  publisher={INFORMS}
}

@article{argon2008scheduling,
  title={Scheduling impatient jobs in a clearing system with insights on patient triage in mass casualty incidents},
  author={Argon, Nilay Tanik and Ziya, Serhan and Righter, Rhonda},
  journal={Probability in the Engineering and Informational Sciences},
  volume={22},
  number={3},
  pages={301--332},
  year={2008},
  publisher={Cambridge University Press}
}

@article{wu2008heuristics,
  title={Heuristics for allocation of reconfigurable resources in a serial line with reliability considerations},
  author={Wu, Cheng-Hung and Down, Douglas G and Lewis, Mark E},
  journal={Iie Transactions},
  volume={40},
  number={6},
  pages={595--611},
  year={2008},
  publisher={Taylor \& Francis}
}

@article{zayas2016dynamic,
  title={Dynamic control of a tandem system with abandonments},
  author={Zayas-Cab{\'a}n, Gabriel and Xie, Jingui and Green, Linda V and Lewis, Mark E},
  journal={Queueing Systems},
  volume={84},
  number={3},
  pages={279--293},
  year={2016},
  publisher={Springer}
}

@article{hordijk1992assignment,
  title={On the assignment of customers to parallel queues},
  author={Hordijk, Arie and Koole, Ger},
  journal={Probability in the Engineering and Informational Sciences},
  volume={6},
  number={4},
  pages={495--511},
  year={1992},
  publisher={Cambridge University Press}
}

@article{down2006dynamic,
  title={Dynamic load balancing in parallel queueing systems: Stability and optimal control},
  author={Down, Douglas G and Lewis, Mark E},
  journal={European Journal of Operational Research},
  volume={168},
  number={2},
  pages={509--519},
  year={2006},
  publisher={Elsevier}
}

@article{nain1989interchange,
  title={Interchange arguments for classical scheduling problems in queues},
  author={Nain, Philippe},
  journal={Systems \& control letters},
  volume={12},
  number={2},
  pages={177--184},
  year={1989},
  publisher={Elsevier}
}

@article{huang2022dynamically,
  title={Dynamically scheduling and maintaining a flexible server},
  author={Huang, Jefferson and Down, Douglas G and Lewis, Mark E and Wu, Cheng-Hung},
  journal={Naval Research Logistics (NRL)},
  volume={69},
  number={2},
  pages={223--240},
  year={2022},
  publisher={Wiley Online Library}
}

@article{harrison1998heavy,
  title={Heavy traffic analysis of a system with parallel servers: asymptotic optimality of discrete-review policies},
  author={Harrison, J Michael},
  journal={The Annals of Applied Probability},
  volume={8},
  number={3},
  pages={822--848},
  year={1998},
  publisher={Institute of Mathematical Statistics}
}

@article{bell2001dynamic,
  title={Dynamic scheduling of a system with two parallel servers in heavy traffic with resource pooling: Asymptotic optimality of a threshold policy},
  author={Bell, Steven L and Williams, Ruth J},
  journal={The Annals of Applied Probability},
  volume={11},
  number={3},
  pages={608--649},
  year={2001},
  publisher={Institute of Mathematical Statistics}
}

@article{ahn2004optimal,
  title={Optimal control of a flexible server},
  author={Ahn, Hyun-Soo and Duenyas, Izak and Zhang, Rachel Q},
  journal={Advances in Applied Probability},
  volume={36},
  number={1},
  pages={139--170},
  year={2004},
  publisher={Cambridge University Press}
}

@article{down2010n,
  title={The N-network model with upgrades},
  author={Down, Douglas G and Lewis, Mark E},
  journal={Probability in the Engineering and Informational Sciences},
  volume={24},
  number={2},
  pages={171--200},
  year={2010},
  publisher={Cambridge University Press}
}

@article{niyirora2016optimal,
  title={Optimal staffing in nonstationary service centers with constraints},
  author={Niyirora, Jerome and Pender, Jamol},
  journal={Naval Research Logistics (NRL)},
  volume={63},
  number={8},
  pages={615--630},
  year={2016},
  publisher={Wiley Online Library}
}

@article{pender2017approximating,
  title={Approximating and stabilizing dynamic rate Jackson networks with abandonment},
  author={Pender, Jamol and Massey, William A},
  journal={Probability in the Engineering and Informational Sciences},
  volume={31},
  number={1},
  pages={1--42},
  year={2017},
  publisher={Cambridge University Press}
}

@article{hordijk1992shortest,
  title={On the shortest queue policy for the tandem parallel queue},
  author={Hordijk, Arie and Koole, Ger},
  journal={Probability in the Engineering and Informational Sciences},
  volume={6},
  number={1},
  pages={63--79},
  year={1992},
  publisher={Cambridge University Press}
}

@article{ahn1999optimal,
  title={Optimal stochastic scheduling of a two-stage tandem queue with parallel servers},
  author={Ahn, Hyun-Soo and Duenyas, Izak and Zhang, Rachel Q},
  journal={Advances in Applied Probability},
  volume={31},
  number={4},
  pages={1095--1117},
  year={1999},
  publisher={Cambridge University Press}
}

@article{wu2006dynamic,
  title={Dynamic allocation of reconfigurable resources ina two-stage tandem queueing system with reliability considerations},
  author={Wu, Cheng-Hung and Lewis, Mark E and Veatch, Michael},
  journal={IEEE Transactions on Automatic Control},
  volume={51},
  number={2},
  pages={309--314},
  year={2006},
  publisher={IEEE}
}

@article{pandelis1994optimal,
  title={Optimal multiserver stochastic scheduling of two interconnected priority queues},
  author={Pandelis, Dimitrios G and Teneketzis, Demosthenis},
  journal={Advances in Applied Probability},
  volume={26},
  number={1},
  pages={258--279},
  year={1994},
  publisher={Cambridge University Press}
}

@article{pandelis2008optimal,
  title={Optimal control of flexible servers in two tandem queues with operating costs},
  author={Pandelis, Dimitrios G},
  journal={Probability in the Engineering and Informational Sciences},
  volume={22},
  number={1},
  pages={107--131},
  year={2008},
  publisher={Cambridge University Press}
}

@article{dobson2012queueing,
  title={A queueing model to evaluate the impact of patient “batching” on throughput and flow time in a medical teaching facility},
  author={Dobson, Gregory and Lee, Hsiao-Hui and Sainathan, Arvind and Tilson, Vera},
  journal={Manufacturing \& Service Operations Management},
  volume={14},
  number={4},
  pages={584--599},
  year={2012},
  publisher={INFORMS}
}

@article{andradottir2021optimizing,
  title={Optimizing the interaction between residents and attending physicians},
  author={Andrad{\'o}ttir, Sigr{\'u}n and Ayhan, Hayriye},
  journal={European Journal of Operational Research},
  volume={290},
  number={1},
  pages={210--218},
  year={2021},
  publisher={Elsevier}
}

@article{yu2024optimal,
  title={Optimal control of supervisors balancing individual and joint responsibilities},
  author={Yu, Zhuoting and Andrad{\'o}ttir, Sigr{\'u}n and Ayhan, Hayriye},
  journal={Probability in the Engineering and Informational Sciences},
  volume={38},
  number={1},
  pages={130--149},
  year={2024},
  publisher={Cambridge University Press}
}

\newpage
\appendix
\newgeometry{
  left=0.8in,
  right=0.8in,
  top=0.8in,
  bottom=0.8in
}
\ls{1.2}
\begingroup
\small
\section{Online Appendix} \label{sec:appendix}
\setlength{\abovedisplayskip}{0pt plus 2pt minus 2pt}
\setlength{\belowdisplayskip}{0pt plus 2pt minus 2pt}
\setlength{\abovedisplayshortskip}{0pt plus 2pt minus 2pt}
\setlength{\belowdisplayshortskip}{0pt plus 2pt minus 2pt}
\subsection{Preliminaries for supporting results} \label{sec:proof_prelim}
We present several preliminary results to support the proof of our main results. The following lemma, which holds independent of Assumption \ref{assm:preferred_collab}, bounds the difference between the cost of starting in various states, helping determine the sign of the difference $D$ defined in \eqref{def:D}. 

\begin{lemma} \label{lemma:diff}
The following inequalities hold:
    \begin{enumerate}
        \item \label{state:lemma_diff_one_more_base}
        Consider $(0,j,k, \ell) \in \X$, where $j+k+\ell < C^p$.  We have
            \begin{equation} \label{eq:one_more_noncollab}
                v(0,j,k+1,\ell) - v(0,j,k,\ell) \geq \frac{h_1}{\mu_1},
            \end{equation}
            \begin{equation} \label{eq:one_more_collab}
                v(0,j,k,\ell+1) - v(0,j,k,\ell) \geq \frac{h_2}{\mu_2}.
            \end{equation}
            \begin{equation} \label{eq:one_more_triage}
                v(0,j+1,k,\ell) - v(0,j,k,\ell) \geq \frac{h_0}{\mu_0} + \min\left\{\frac{h_1}{\mu_1},\frac{h_2}{\mu_2}\right\}.
            \end{equation}
        \item \label{state:lemma_diff_one_more}
        Consider $(i,j,k,\ell) \in \X$ where $i \geq 0$ and $j+k+\ell = C^p$,
            \begin{align}  
                \lefteqn{v(i+1,j,k,\ell) - v(i,j,k,\ell)}& \nonumber \\
                & \quad \geq \min\left\{\frac{h_1}{\mu_1},\frac{h_2}{\mu_2}\right\} + \frac{(i+j+1)h_0}{C^p} \frac{1}{\mu_0} +\frac{(i+1)h_0}{C^p} \min\left\{\frac{1}{\mu_1}, \frac{1}{\mu_2}\right\} \label{eq:one_more_queue_full}\\
                & \quad \geq \min\left\{\frac{h_1}{\mu_1},\frac{h_2}{\mu_2}\right\}. \label{eq:one_more_queue}
            \end{align}
        \item \label{state:lemma_diff_bound_move_one}
            Consider $(i,j,k,\ell) \in \X_D$, where $i \geq 0$ and $j+k+\ell = C^p$,
            \begin{align}
                v(i+1,j-1,k+1,\ell) - v(i,j,k,\ell) & \geq  \min\left\{\frac{h_1}{\mu_1},\frac{h_2}{\mu_2}\right\}. \label{eq:bound_triage_noncollab} \\
                v(i+1,j-1,k,\ell+1) - v(i,j,k,\ell) & \geq  \min\left\{\frac{h_1}{\mu_1},\frac{h_2}{\mu_2}\right\}. \label{eq:bound_triage_collab}
            \end{align}
            Moreover, 
            \begin{align}
                & v(i+1,j-1,k+1,\ell) - v(i,j,k,\ell) \nonumber \\
                &\quad \geq \min\left\{\frac{h_1}{\mu_1},\frac{h_2}{\mu_2}\right\} + \frac{\mu_0}{d(j,k+1,\ell)} \left(\frac{(i+j+1)h_0}{C^p \mu_0} +\frac{(i+1)h_0}{C^p} \min\left\{\frac{1}{\mu_1}, \frac{1}{\mu_2}\right\} \right). \label{eq:bound_triage_noncollab_full} \\
                & v(i+1,j-1,k,\ell+1) - v(i,j,k,\ell) \nonumber \\
                &\quad \geq \min\left\{\frac{h_1}{\mu_1},\frac{h_2}{\mu_2}\right\} + \frac{\mu_0}{d(j,k,\ell+1)} \left(\frac{(i+j+1)h_0}{C^p \mu_0} +\frac{(i+1)h_0}{C^p} \min\left\{\frac{1}{\mu_1}, \frac{1}{\mu_2}\right\} \right). \label{eq:bound_triage_collab_full}
            \end{align}
        \item \label{state:lemma_diff_larger_mu1_ub}
        Suppose 
        $\mu_1 \geq \mu_2$ and consider $(i,j,k,\ell) \in \X_D$.
        \begin{equation}
            v(i,j-1,k+1,\ell) - v(i,j-1,k,\ell+1) \leq \frac{h_1}{\mu_1} - \frac{h_2}{\mu_2}. \label{eq:larger_mu1_ub}
        \end{equation}
    \end{enumerate}
\end{lemma}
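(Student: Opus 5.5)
We prove all four statements of Lemma \ref{lemma:diff} with sample-path coupling arguments. Starting from the larger state, we construct a (generally suboptimal) policy for the smaller state. Its cost plus an explicit extra term bounds the optimal cost of the larger state from below. Equivalently, we let the larger system follow the optimal policy $\pi^*$ and show the smaller system can mimic it while saving at least the claimed amount. Throughout, we use the uniformized optimality equations \eqref{eq:opt-zero}--\eqref{eq:opt1} and induction on the total number of patients $i+j+k+\ell$.

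For Statement \ref{state:lemma_diff_one_more_base}, the extra patient in the larger system is tagged. Since $j+k+\ell<C^p$ and $i=0$, no patient ever waits for an NP, so the tagged patient does not delay any other patient's NP. If the extra patient is at Station 1, it stays for an $\exp(\mu_1)$ time, costing $h_1/\mu_1$ in expectation. Its presence can only add congestion at Station 2, never remove it. Hence the smaller system, run with the same decisions on the remaining patients (an admissible policy), has cost at most $v(\cdot)-h_1/\mu_1$. For Station 2 the argument is the same: the extra patient occupies a GP for at least an $\exp(\mu_2)$ service, and any queueing delay it imposes on others only increases the larger system's cost. For a triage patient, the cost is $h_0/\mu_0$ plus whichever downstream service $\pi^*$ picks, which is at least $\min\{h_1/\mu_1,h_2/\mu_2\}$. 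We formalize each case by induction on $j+k+\ell$, comparing the two optimality equations term by term and using the induction hypothesis on the successor states. One technical point: the event rates differ between the two systems by exactly the extra patient's rate, which we handle by uniformizing with a common rate.

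Statement \ref{state:lemma_diff_one_more} is the main obstacle. The extra queued patient in $(i+1,j,k,\ell)$ costs its own downstream service, at least $\min\{h_1/\mu_1,h_2/\mu_2\}$. It also costs waiting time in queue while $i$ patients ahead of it are processed by $C^p$ NPs. We label it as the last patient in the queue. Its waiting cost is $h_0$ times its waiting time. Each of the $i+j+1$ triage completions preceding its own completion takes, in aggregate across $C^p$ NPs, at least $1/(C^p\mu_0)$ expected time. Likewise, at least $i+1$ downstream completions, each taking at least $\min\{1/\mu_1,1/\mu_2\}/C^p$ expected aggregated time, must precede its entry into service. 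I would make this rigorous by induction on $i$ via \eqref{eq:opt1}, peeling one event at a time. The base case $i=0$ reduces to Statement \ref{state:lemma_diff_one_more_base} after the first NP frees up. The delicate part is that blocking at Station 2 makes NP release times policy-dependent. I expect to use only the lower bound that each NP needs at least the minimal service time, with a work-conservation argument across the $C^p$ servers. The weaker bound \eqref{eq:one_more_queue} is then immediate.

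Statement \ref{state:lemma_diff_bound_move_one} follows by conditioning on the first event out of $(i+1,j-1,k+1,\ell)$. With probability $\mu_0/d(j,k+1,\ell)$ the first event is a triage completion. Otherwise the two states evolve in lockstep up to the relabeling, and the bound from Statement \ref{state:lemma_diff_one_more} is applied to the appropriate successor pair. This produces exactly the weights $\mu_0/d(\cdot)$ appearing in \eqref{eq:bound_triage_noncollab_full}--\eqref{eq:bound_triage_collab_full}. Finally, for Statement \ref{state:lemma_diff_larger_mu1_ub} with $\mu_1\ge\mu_2$, the system starting with the patient at Station 2 follows $\pi^*$. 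The other system mimics it, coupling the Station 1 service time with the Station 2 service time, which is stochastically longer including any GP queueing. Its NP is therefore released no later, which can only reduce the upstream and blocking costs. The only loss is the service cost difference $h_1/\mu_1-h_2/\mu_2$; the NP idles if necessary to keep the coupling exact, and idling is admissible for the upper bound.
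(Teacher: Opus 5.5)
\textbf{Gap 1: the idling step in Statement~\ref{state:lemma_diff_larger_mu1_ub} fails.} Letting ``the NP idle if necessary'' is not an admissible action in this MDP. When an NP is released with $i\ge 1$, the transition to $(i-1,j+1,\cdot,\cdot)$ is forced, and the only control is the $0/1$ routing at triage completions.

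You cannot fall back on a relaxation either, because allowing idling can strictly lower the optimal cost here. Take $h_0<h_2$, $\mu_0$ large, $h_1/\mu_1\gg h_2/\mu_2$, and all GPs busy. Then parking the next patient in the triage queue at rate $h_0$ beats making it wait in the GP queue at rate $h_2$. So your idling construction bounds the wrong value function.

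Without idling, your NP starts the next triage before its twin does. Copying the twin's later routing decision is then anticipative. This is exactly the point the paper handles with Statement~\ref{state:lemma_diff_bound_move_one}. After uniformizing at $d(j-1,k+1,\ell)$, consider the fictitious event of rate $\mu_1-\mu_2\mathbb{I}\{\ell<C^G\}$, in which your patient has left Station 1 while its twin is still at Station 2. For $i\ge 1$ it produces the difference $v(i-1,j,k,\ell)-v(i,j-1,k,\ell+1)$. Statement 3 bounds this by $-h_2/\mu_2$: directly via \eqref{eq:bound_triage_collab} when $h_1/\mu_1\ge h_2/\mu_2$, and via \eqref{eq:bound_triage_noncollab} plus the inductive hypothesis otherwise. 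So Statement 4 must be proved after, and from, Statement 3.

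\textbf{Gap 2: Statement~\ref{state:lemma_diff_one_more} is left open, and the stated reason is false.} The tagged last patient can be triaged in parallel with, and finish before, patients ahead of it. So the $i+j+1$ triage completions need not precede its own.

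Your coupling reduction is sound: before the tagged patient starts triage the two systems coincide, and afterwards the queue is empty, so the difference is at least its expected cost. What is missing is the rate inequality
$\frac{j\mu_0}{d}\frac{h_0}{C^p\mu_0}+\frac{k\mu_1+\min\{\ell,C^G\}\mu_2}{d}\frac{h_0}{C^p}\min\{\frac{1}{\mu_1},\frac{1}{\mu_2}\}\le\frac{h_0}{d}$.
It follows from $j+k+\min\{\ell,C^G\}\le C^p$ and $\mu_n\min\{1/\mu_1,1/\mu_2\}\le 1$. Viewed as a potential, the claimed bound therefore drains no faster than $h_0$ per unit time. Once the tagged patient is in triage, its own $h_0/\mu_0$ covers the remainder because $j+1\le C^p$; this is where \eqref{eq:one_more_triage} enters.

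Your induction measures also do not close. Induction on $i$, on $i+j+k+\ell$, or (in Statement 1) on $j+k+\ell$ fails, since a triage completion leaves each of these unchanged. The paper inducts on $M=2(i+j)+k+\ell$, which drops by one at every event.

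\textbf{Gap 3: the Statement 3 sketch misdescribes the events.} The non-$\mu_0$ events do not put the two systems in lockstep. They return a difference of the same offset type at shifted indices, which needs the inductive hypothesis for Statement 3 itself, or Statement 1 when $i=0$. The extra downstream patient's holding term supplies the remaining weight on $\min\{h_1/\mu_1,h_2/\mu_2\}$. Statement 2 enters only through the fictitious $\mu_0$ event. That event is the completion, in $(i,j,k,\ell)$, of the triage that has no counterpart in $(i+1,j-1,k+1,\ell)$; it is not a triage completion out of the latter state.

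\textbf{Off-by-one in the stated bound.} Done carefully, that event applies Statement 2 at $(i,j-1,k+1,\ell)$ and gives $(i+j)\frac{h_0}{C^p\mu_0}$ in the bracket, not $(i+j+1)\frac{h_0}{C^p\mu_0}$. The paper's proof has the same slip, and the displayed constant is actually false. Take $C^p=2$, $C^G=1$, $\mu_0=0.01$, $\mu_1=\mu_2=h_0=h_1=h_2=1$, and $(i,j,k,\ell)=(0,2,0,0)$. Always routing to Station 1 is optimal, so both values are explicit. The left side $v(1,1,1,0)-v(0,2,0,0)$ of \eqref{eq:bound_triage_noncollab_full} is about $1.995$, while the right side is about $2.475$. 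The slip is harmless later, since only linear growth in $i$ is used.
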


\begin{proof} [Proof of Statement \ref{state:lemma_diff_one_more_base} in Lemma \ref{lemma:diff}.] The proof proceeds analogously, albeit more simply than other statements, using induction on $M$, where $M= 2(i+j)+k+\ell$ for state $(i=0,j,k,\ell)$, and is omitted for brevity.
\end{proof}

\begin{proof} [Proof of Statement \ref{state:lemma_diff_one_more} in Lemma \ref{lemma:diff}.]
    The proof proceeds by induction on $M$, where $M= 2(i+j)+k+\ell$ for state $(i,j,k,\ell)$.
    Consider the base case at $M = C^p$, i.e., $i=j=0$ and $k+\ell=C^p$.
    \begin{align}
        \lefteqn{v(1,0,k,\ell) - v(0,0,k,\ell)}& \nonumber \\
        & \quad = \frac{h_0}{d(0,k,\ell)} + \frac{k\mu_1}{d(0,k,\ell)}\Big[v(0,1,k-1,\ell) - v(0,0,k-1,\ell)\Big] \nonumber \\
        & \quad \qquad + \frac{\min\{\ell,C^G\}\mu_2}{d(0,k,\ell)}\Big[v(0,1,k,\ell-1) - v(0,0,k,\ell-1)\Big]. \label{eq:one_more_base0}
    \end{align}
    Applying \eqref{eq:one_more_triage} for the differences with coefficients $\frac{k\mu_1}{d(0,k,\ell)}$ and $\frac{\min\{\ell,C^G\}\mu_2}{d(0,k,\ell)}$ in \eqref{eq:one_more_base0} yields
    \begin{align*}
        &v(1,0,k,\ell) - v(0,0,k,\ell) -\left(\min\left\{\frac{h_1}{\mu_1},\frac{h_2}{\mu_2}\right\} + \frac{h_0}{C^p} \frac{1}{\mu_0} +\frac{h_0}{C^p} \min\left\{\frac{1}{\mu_1}, \frac{1}{\mu_2}\right\}\right)\\
        & \quad \geq  \frac{h_0}{d(0,k,\ell)} +\Big( \frac{k\mu_1 + \min\{\ell,C^G\}\mu_2}{d(0,k,\ell)}\Big) \Big(\frac{h_0}{\mu_0} + \min \left\{\frac{h_1}{\mu_1},\frac{h_2}{\mu_2}\right\}\Big)\\ 
        &\quad \qquad -\left(\min\left\{\frac{h_1}{\mu_1},\frac{h_2}{\mu_2}\right\} + \frac{h_0}{C^p} \frac{1}{\mu_0} +\frac{h_0}{C^p} \min\left\{\frac{1}{\mu_1}, \frac{1}{\mu_2}\right\}\right)\\
        & \quad = \left(\frac{h_0}{\mu_0} - \frac{h_0}{C^p} \frac{1}{\mu_0} \right) + \left(\frac{h_0}{d(0,k,\ell)}- \frac{h_0}{C^p} \min\left\{\frac{1}{\mu_1}, \frac{1}{\mu_2}\right\}\right).
    \end{align*}The result follows by noticing $\frac{h_0}{\mu_0} - \frac{h_0}{C^p} \frac{1}{\mu_0} \geq 0$ and
    \begin{align*}
        \lefteqn{\frac{h_0}{d(0,k,\ell)}- \frac{h_0}{C^p} \min\left\{\frac{1}{\mu_1}, \frac{1}{\mu_2}\right\}}\\
        &\quad = \frac{h_0}{d(0,k,\ell)}- \frac{k\mu_1}{d(0,k,\ell)}\frac{h_0}{C^p} \min\left\{\frac{1}{\mu_1}, \frac{1}{\mu_2}\right\} - \frac{\min\{\ell,C^G\}\mu_2}{d(0,k,\ell)}\frac{h_0}{C^p} \min\left\{\frac{1}{\mu_1}, \frac{1}{\mu_2}\right\}\\
        &\quad \geq \frac{h_0}{d(j,k,\ell)} - \frac{k}{d(j,k,\ell)}\frac{h_0}{C^p} - \frac{\min\{\ell,C^G\}}{d(j,k,\ell)}\frac{h_0}{C^p}\\
        &\quad \geq 0,
    \end{align*}where the first equality uses the fact that $d(0,k,\ell) = k\mu_1+\min\{\ell,C^G\}\mu_2$, the first inequality bounds the minimum in the second and third terms by $\frac{1}{\mu_1}$ and $\frac{1}{\mu_2}$, respectively, and the last step holds since $k+\min\{\ell,C^G\} \leq C^p$.

    Now consider a general $M$ in state $(i,j,k,\ell)$ by assuming the result holds at $M-1$. The proof proceeds by considering whether $i=0$ or $i \geq 1$, which govern the state transitions (and hence Bellman equations).
    \begin{enumerate}[label= \textbf{Case} \arabic*:, leftmargin=3\parindent]
        \item When $i = 0$ with $j \geq 1$, we have
        \begin{align}
            \lefteqn{v(1,j,k,\ell) - v(0,j,k,\ell)}& \nonumber \\
            & \quad = \frac{h_0}{d(j,k,\ell)} + \frac{j\mu_0}{d(j,k,\ell)}\Big[\min\{v(1,j-1,k+1,\ell),v(1,j-1,k,\ell+1)\} \nonumber \\
            & \qquad \qquad - \min\{v(0,j-1,k+1,\ell),v(0,j-1,k,\ell+1)\}\Big] \nonumber \\
            &\qquad \quad + \frac{k\mu_1}{d(j,k,\ell)}\Big[v(0,j+1,k-1,\ell) - v(0,j,k-1,\ell)\Big] \nonumber \\
            & \quad \qquad + \frac{\min\{\ell,C^G\}\mu_2}{d(j,k,\ell)}\Big[v(0,j+1,k,\ell-1) - v(0,j,k,\ell-1)\Big]. \label{eq:one_more_base}
        \end{align}
        Consider the difference in the second term in \eqref{eq:one_more_base} with coefficient $\frac{j\mu_0}{d(j,k,\ell)}$. Suppose $v(1,j-1,k+1,\ell) \leq v(1,j-1,k,\ell+1)$, choosing an upper bound $v(0,j-1,k+1,\ell)$ in the second minimum yields
            \begin{align}
                \lefteqn{\min\{v(1,j-1,k+1,\ell),v(1,j-1,k,\ell+1)\}}& \nonumber \\
                &\quad \qquad - \min\{v(0,j-1,k+1,\ell),v(0,j-1,k,\ell+1)\} \nonumber \\
                &\quad \geq v(1,j-1,k+1,\ell) - v(0,j-1,k+1,\ell) \nonumber \\
                &\quad \geq \min\left\{\frac{h_1}{\mu_1},\frac{h_2}{\mu_2}\right\} + \frac{j h_0}{C^p} \frac{1}{\mu_0} +\frac{h_0}{C^p} \min\left\{\frac{1}{\mu_1}, \frac{1}{\mu_2}\right\}, \label{eq:eq:one_more_base_mu0_terms}
            \end{align}where the last step applies the inductive hypothesis in state $(j-1,k+1,\ell)$ with $M-1$ services to complete.  If $v(1,j-1,k+1,\ell) > v(1,j-1,k,\ell+1)$, choosing an upper bound $v(0,j-1,k,\ell+1)$ in the second minimum and subsequently invoking the inductive hypothesis yields the same lower bound.
        By applying \eqref{eq:eq:one_more_base_mu0_terms} for the difference in the second term and \eqref{eq:one_more_triage} for the differences in both third and fourth terms in \eqref{eq:one_more_base}, we obtain
        \begin{align}
            \lefteqn{v(1,j,k,\ell) - v(0,j,k,\ell) - \left(\min\left\{\frac{h_1}{\mu_1},\frac{h_2}{\mu_2}\right\} + \frac{(j+1)h_0}{C^p} \frac{1}{\mu_0} +\frac{h_0}{C^p} \min\left\{\frac{1}{\mu_1}, \frac{1}{\mu_2}\right\}\right)}& \nonumber \\
            &\quad \geq \frac{h_0}{d(j,k,\ell)} + \frac{j\mu_0}{d(j,k,\ell)}\left(\min\left\{\frac{h_1}{\mu_1},\frac{h_2}{\mu_2}\right\} + \frac{j h_0}{C^p} \frac{1}{\mu_0} +\frac{h_0}{C^p} \min\left\{\frac{1}{\mu_1}, \frac{1}{\mu_2}\right\}\right) \nonumber \\
            &\qquad \quad + \frac{k\mu_1}{d(j,k,\ell)}\left(\min\left\{\frac{h_1}{\mu_1},\frac{h_2}{\mu_2}\right\} + \frac{h_0}{\mu_0}\right) + \frac{\min\{\ell,C^G\}\mu_2}{d(j,k,\ell)}\left(\min\left\{\frac{h_1}{\mu_1},\frac{h_2}{\mu_2}\right\} + \frac{h_0}{\mu_0} \right) \nonumber \\
            &\qquad \quad- \left(\min\left\{\frac{h_1}{\mu_1},\frac{h_2}{\mu_2}\right\} + \frac{(j+1)h_0}{C^p} \frac{1}{\mu_0} +\frac{h_0}{C^p} \min\left\{\frac{1}{\mu_1}, \frac{1}{\mu_2}\right\}\right) \nonumber \\
            &\quad = \frac{h_0}{d(j,k,\ell)} - \frac{j\mu_0}{d(j,k,\ell)}\frac{h_0}{C^p\mu_0} - \frac{k\mu_1+\min\{\ell,C^G\}\mu_2}{d(j,k,\ell)} \frac{h_0}{\mu_0}\left(\frac{j+1}{C^p}-1\right) \nonumber \\
            &\quad \qquad - \frac{k\mu_1}{d(j,k,\ell)}\frac{h_0}{C^p} \min\left\{\frac{1}{\mu_1}, \frac{1}{\mu_2}\right\} - \frac{\min\{\ell,C^G\}\mu_2}{d(j,k,\ell)}\frac{h_0}{C^p} \min\left\{\frac{1}{\mu_1}, \frac{1}{\mu_2}\right\}. \label{eq:one_more_base_inte}
        \end{align}
        Notice the third term in \eqref{eq:one_more_base_inte} is non-negative:
        \begin{enumerate}[label= \textbf{Subcase} (\alph*):, leftmargin=2.2\parindent]
            \item If $j+1 \leq C^p$, it follows immediately that 
            \begin{align}
                - \frac{k\mu_1+\min\{\ell,C^G\}\mu_2}{d(j,k,\ell)} \frac{h_0}{\mu_0}\left(\frac{j+1}{C^p}-1\right) \geq 0. \label{eq:one_more_base_inte_non_neg}
            \end{align}
            \item If $j+1 > C^p$, then necessarily $j = C^p$ (since $j \leq C^p$), which in turn implies $k = \ell = 0$ due to the constraint $j+k+\ell = C^p$. Consequently, the third term in \eqref{eq:one_more_base_inte} vanishes.
        \end{enumerate}
        Applying \eqref{eq:one_more_base_inte_non_neg} in \eqref{eq:one_more_base_inte} yields
        \begin{align}
            \lefteqn{v(1,j,k,\ell) - v(0,j,k,\ell) - \left(\min\left\{\frac{h_1}{\mu_1},\frac{h_2}{\mu_2}\right\} + \frac{(j+1)h_0}{C^p} \frac{1}{\mu_0} +\frac{h_0}{C^p} \min\left\{\frac{1}{\mu_1}, \frac{1}{\mu_2}\right\}\right)}& \nonumber \\
            &\quad \geq \frac{h_0}{d(j,k,\ell)} - \frac{j}{d(j,k,\ell)}\frac{h_0}{C^p} - \frac{k\mu_1}{d(j,k,\ell)}\frac{h_0}{C^p} \min\left\{\frac{1}{\mu_1}, \frac{1}{\mu_2}\right\} - \frac{\min\{\ell,C^G\}\mu_2}{d(j,k,\ell)}\frac{h_0}{C^p} \min\left\{\frac{1}{\mu_1}, \frac{1}{\mu_2}\right\} \nonumber \\
            &\quad \geq \frac{h_0}{d(j,k,\ell)} - \frac{j}{d(j,k,\ell)}\frac{h_0}{C^p} - \frac{k}{d(j,k,\ell)}\frac{h_0}{C^p} - \frac{\min\{\ell,C^G\}}{d(j,k,\ell)}\frac{h_0}{C^p} \nonumber \\
            &\quad \geq 0, \label{eq:one_more_base_non_neg}
        \end{align}where the second-to-last inequality follows by upper bounding the minimum in the third and fourth terms by $\frac{1}{\mu_1}$ and $\frac{1}{\mu_2}$, respectively, and the last step holds since $j+k+\min\{\ell,C^G\} \leq C^p$.
        \item When $i \geq 1$, the difference in \eqref{eq:one_more_queue} now becomes
        \begin{align}
            \lefteqn{v(i+1,j,k,\ell) - v(i,j,k,\ell)}& \nonumber \\
            = & \ \frac{h_0}{d(j,k,\ell)} +\frac{j\mu_0}{d(j,k,\ell)}\Big[\min\{v(i+1,j-1,k+1,\ell),v(i+1,j-1,k,\ell+1)\} \nonumber \\
            & \qquad-\min\{v(i,j-1,k+1,\ell),v(i,j-1,k,\ell+1)\}\Big] \nonumber \\
            & \quad +\frac{k\mu_1}{d(j,k,\ell)}\Big[v(i,j+1,k-1,\ell) - v(i-1,j+1,k-1,\ell)\Big] \nonumber\\
            & \quad +\frac{\min\{\ell,C^G\}\mu_2}{d(j,k,\ell)}\Big[v(i,j+1,k,\ell-1) - v(i-1,j+1,k,\ell-1)\Big] \label{eq:one_more}
        \end{align}
        Using a similar approach as proving \eqref{eq:eq:one_more_base_mu0_terms}, it follows that the difference in the second term in \eqref{eq:one_more} with coefficient $\frac{j\mu_0}{d(j,k,\ell)}$ is lower bounded by the following
        \begin{align}
        \lefteqn{}
            \min\left\{\frac{h_1}{\mu_1},\frac{h_2}{\mu_2}\right\} + \frac{(i+j)h_0}{C^p} \frac{1}{\mu_0} +\frac{(i+1)h_0}{C^p} \min\left\{\frac{1}{\mu_1}, \frac{1}{\mu_2}\right\}. \label{eq:one_more_mu0_terms}
        \end{align}
        Applying \eqref{eq:one_more_mu0_terms} and the inductive hypothesis for the difference in the third and the last terms results in
        \begin{align*}
            &v(i+1,j,k,\ell) - v(i,j,k,\ell) - \left(\min\left\{\frac{h_1}{\mu_1},\frac{h_2}{\mu_2}\right\} + \frac{(i+j+1)h_0}{C^p} \frac{1}{\mu_0} +\frac{(i+1)h_0}{C^p} \min\left\{\frac{1}{\mu_1}, \frac{1}{\mu_2}\right\}\right)\\
            & \quad \geq \frac{h_0}{d(j,k,\ell)} + \frac{j\mu_0}{d(j,k,\ell)}\left(\min\left\{\frac{h_1}{\mu_1},\frac{h_2}{\mu_2}\right\} + \frac{(i+j)h_0}{C^p} \frac{1}{\mu_0} +\frac{(i+1)h_0}{C^p} \min\left\{\frac{1}{\mu_1}, \frac{1}{\mu_2}\right\}\right) \\
            & \quad \qquad + \frac{k\mu_1}{d(j,k,\ell)} \left(\min\left\{\frac{h_1}{\mu_1},\frac{h_2}{\mu_2}\right\} + \frac{(i+j+1)h_0}{C^p} \frac{1}{\mu_0} +\frac{i h_0}{C^p} \min\left\{\frac{1}{\mu_1}, \frac{1}{\mu_2}\right\}\right)\\
            & \quad \qquad + \frac{\min\{\ell,C^G\}\mu_2}{d(j,k,\ell)}\left(\min\left\{\frac{h_1}{\mu_1},\frac{h_2}{\mu_2}\right\} + \frac{(i+j+1)h_0}{C^p} \frac{1}{\mu_0} +\frac{i h_0}{C^p} \min\left\{\frac{1}{\mu_1}, \frac{1}{\mu_2}\right\}\right)\\
            &\quad \qquad - \left(\min\left\{\frac{h_1}{\mu_1},\frac{h_2}{\mu_2}\right\} + \frac{(i+j+1)h_0}{C^p} \frac{1}{\mu_0} +\frac{(i+1)h_0}{C^p} \min\left\{\frac{1}{\mu_1}, \frac{1}{\mu_2}\right\}\right)\\
            &\quad = \frac{h_0}{d(j,k,\ell)} - \frac{j\mu_0}{d(j,k,\ell)}\frac{h_0}{C^p\mu_0}-\frac{k\mu_1+\min\{\ell,C^G\}\mu_2}{d(j,k,\ell)}\frac{h_0}{C^p} \min\left\{\frac{1}{\mu_1}, \frac{1}{\mu_2}\right\} \\
            &\quad \geq 0,
        \end{align*}
    where the final step follows from a similar reasoning as \eqref{eq:one_more_base_non_neg}.
    \end{enumerate}
    Consequently, the result in \eqref{eq:one_more_queue_full} holds, which also leads to \eqref{eq:one_more_queue}.
\end{proof}

\begin{proof} [Proof of Statement \ref{state:lemma_diff_bound_move_one} in Lemma \ref{lemma:diff}.]
    We establish \eqref{eq:bound_triage_collab} and \eqref{eq:bound_triage_collab_full} here. The corresponding results in \eqref{eq:bound_triage_noncollab} and \eqref{eq:bound_triage_noncollab_full} follow a symmetric, albeit slightly simpler, argument.
    The proof proceeds by induction on the total number of remaining services, defined as $M = 2(i+j)+k+\ell$ for the state $(i,j,k,\ell)$.

    We begin with the base case $i=0$, $j=1$, and $k+\ell = C^p-1$, so that  $M = C^p+1$. By thinning the MDPs in \eqref{eq:bound_triage_collab} with a higher rate $d(1,k,\ell+1)$, we have
    \begin{align}
        \lefteqn{v(1,0,k,\ell+1) - v(0,1,k,\ell)}& \nonumber \\
        & \quad = \frac{h_2}{d(1,k,\ell+1)} + \frac{k\mu_1}{d(1,k,\ell+1)}\Big[v(0,1,k-1,\ell+1) - v(0,1,k-1,\ell)\Big] \nonumber \\
        & \quad \qquad + \frac{\min\{\ell,C^G\}\mu_2}{d(1,k,\ell+1)}\Big[v(0,1,k,\ell) - v(0,1,k,\ell-1)\Big]  +\frac{\mu_2\mathbb{I}\{\ell<C^G\}}{d(1,k,\ell+1)}\Big[v(0,1,k,\ell) - v(0,1,k,\ell)\Big] \nonumber \\
        & \quad \qquad +\frac{\mu_0}{d(1,k,\ell+1)}\Big[v(1,0,k,\ell+1) - \min\{v(0,0,k+1,\ell),v(0,0,k,\ell+1)\}\Big] \label{eq:bound_triage_collab_base0}
    \end{align}
    Choosing an upper bound $v(0,0,k,\ell+1)$ in place of the minimum in the last term of \eqref{eq:bound_triage_collab_base0}, which has coefficient $\frac{\mu_0}{d(1,k,\ell+1)}$, yields
    \begin{align}
        \lefteqn{v(1,0,k,\ell+1) - \min\{v(0,0,k+1,\ell),v(0,0,k,\ell+1)\}}&\nonumber\\
        &\quad \geq v(1,0,k,\ell+1) - v(0,0,k,\ell+1) \nonumber \\
        &\quad \geq \min \left\{\frac{h_1}{\mu_1},\frac{h_2}{\mu_2}\right\}, \label{eq:bound_triage_collab_base0_mu0}
    \end{align}where the last step holds by \eqref{eq:one_more_queue}.
    Applying \eqref{eq:one_more_collab} twice for the differences in the second and the third terms, with coefficients $\frac{k\mu_1}{d(1,k,\ell+1)}$ and $\frac{\mu_2\mathbb{I}\{\ell<C^G\}}{d(1,k,\ell+1)}$, respectively, and \eqref{eq:bound_triage_collab_base0_mu0} in \eqref{eq:bound_triage_collab_base0}, we obtain,
    \begin{align}
        \lefteqn{v(1,0,k,\ell+1) - v(0,1,k,\ell)}& \nonumber \\
        & \quad\geq \frac{h_2}{d(1,k,\ell+1)} +\frac{k\mu_1 + \min\{\ell,C^G\}\mu_2}{d(1,k,\ell+1)} \frac{h_2}{\mu_2} + \frac{\mu_0}{d(1,k,\ell+1)} \min \left\{\frac{h_1}{\mu_1},\frac{h_2}{\mu_2}\right\} \nonumber \\
        &\quad \geq \frac{\mu_0+k\mu_1 + (\min\{\ell,C^G\}+1)\mu_2}{d(1,k,\ell+1)} \min \left\{\frac{h_1}{\mu_1},\frac{h_2}{\mu_2}\right\} \nonumber \\
        & \quad \geq  \min \left\{\frac{h_1}{\mu_1},\frac{h_2}{\mu_2}\right\}, \label{eq:bound_triage_collab_base0_result}
    \end{align}where the second inequality uses the fact that $\frac{h_2}{\mu_2} \geq \min \left\{\frac{h_1}{\mu_1},\frac{h_2}{\mu_2}\right\}$, and the final step holds since $\mu_0+k\mu_1 + (\min\{\ell,C^G\}+1)\mu_2 \geq \mu_0+k\mu_1 + \min\{\ell+1,C^G\}\mu_2 = d(1,k,\ell+1)$.

    Now suppose \eqref{eq:bound_triage_collab} holds at $M-1$, and consider it at $M$, where $M \geq C^p+1$, based on whether $i=0$.
    \begin{enumerate}[label= \textbf{Case} \arabic*:, leftmargin=3\parindent]
        \item When $i=0$, thinning the MDPs in \eqref{eq:bound_triage_collab} with a higher rate $d(j,k,\ell+1)$ yields
        \begin{align}
            \lefteqn{v(1,j-1,k,\ell+1) - v(0,j,k,\ell)}& \nonumber \\
            & \quad = \frac{h_2}{d(j,k,\ell+1)} + \frac{k\mu_1}{d(j,k,\ell+1)}\Big[v(0,j,k-1,\ell+1) - v(0,j,k-1,\ell)\Big] \nonumber \\
            & \quad \qquad + \frac{\min\{\ell,C^G\}\mu_2}{d(j,k,\ell+1)}\Big[v(0,j,k,\ell) - v(0,j,k,\ell-1)\Big] \nonumber \\
            &\quad \qquad + \frac{(j-1)\mu_0}{d(j,k,\ell+1)}\Big[\min\{v(1,j-2,k+1,\ell+1),v(1,j-2,k,\ell+2)\} \nonumber \\
            & \qquad \qquad- \min\{v(0,j-1,k+1,\ell),v(0,j-1,k,\ell+1)\}\Big] \nonumber \\
            & \quad \qquad +\frac{\mu_2\mathbb{I}\{\ell<C^G\}}{d(j,k,\ell+1)}\Big[v(0,j,k,\ell) - v(0,j,k,\ell)\Big] \label{eq:bound_triage_collab_base} \\
            & \quad \qquad+ \frac{\mu_0}{d(j,k,\ell+1)}\Big[v(1,j-1,k,\ell+1) - \min\{v(0,j-1,k+1,\ell),v(0,j-1,k,\ell+1)\}\Big]. \nonumber 
        \end{align}
        We proceed to show that the difference in the fourth term of \eqref{eq:bound_triage_collab_base}, which is multiplied by the coefficient $\frac{(j-1)\mu_0}{d(j,k,\ell+1)}$, is lower bounded by $\min \left\{\frac{h_1}{\mu_1},\frac{h_2}{\mu_2}\right\}$. 
        Suppose first $v(1,j-2,k+1,\ell+1) \leq v(1,j-2,k,\ell+2)$, replacing the second minimum with an upper bound $v(0,j-1,k+1,\ell)$ yields
        \begin{align}
            &\min\{v(1,j-2,k+1,\ell+1),v(1,j-2,k,\ell+2)\} - \min\{v(0,j-1,k+1,\ell),v(0,j-1,k,\ell+1)\} \nonumber \\
            & \quad \geq v(1,j-2,k+1,\ell+1) - v(0,j-1,k+1,\ell) \nonumber \\
            & \quad \geq \min \left\{\frac{h_1}{\mu_1},\frac{h_2}{\mu_2}\right\}, \label{eq:bound_triage_collab_base_mu0_terms}
        \end{align}where the last step applies the inductive hypothesis. 
        If $v(1,j-2,k+1,\ell+1) > v(1,j-2,k,\ell+2)$, replacing the second minimum with an upper bound $v(0,j-1,k,\ell+1)$ yields the same lower bound as in \eqref{eq:bound_triage_collab_base_mu0_terms} using the inductive hypothesis. 
        The terms associated with the coefficients $\frac{k\mu_1}{d(j,k,\ell+1)}$, $\frac{\min\{\ell,C^G\}\mu_2}{d(j,k,\ell+1)}$, and $\frac{\mu_0}{d(j,k,\ell+1)}$ satisfy the same lower bound by reasoning analogous to that used in the base case.
        
        Applying the lower bounds $\min \left\{\frac{h_1}{\mu_1},\frac{h_2}{\mu_2}\right\}$ to each of the differences in \eqref{eq:bound_triage_collab_base}, with the exception of the term involving the coefficient $\frac{\mu_2\mathbb{I}\{\ell<C^G\}}{d(j,k,\ell+1)}$, we obtain
        \begin{align*}
            \lefteqn{v(1,j-1,k,\ell+1) - v(0,j,k,\ell)}&\\
            & \quad \geq \frac{h_2}{d(j,k,\ell+1)} + \frac{j\mu_0 + k\mu_1 + \min\{\ell,C^G\}\mu_2}{d(j,k,\ell+1)}  \min \left\{\frac{h_1}{\mu_1},\frac{h_2}{\mu_2}\right\}\\
            & \quad \geq \frac{j\mu_0 + k\mu_1 + (\min\{\ell,C^G\}+1)\mu_2}{d(j,k,\ell+1)}  \min \left\{\frac{h_1}{\mu_1},\frac{h_2}{\mu_2}\right\}\\
            & \quad \geq \min \left\{\frac{h_1}{\mu_1},\frac{h_2}{\mu_2}\right\},
        \end{align*}where the final two steps follow by analogous reasoning to that used in deriving \eqref{eq:bound_triage_collab_base0_result}. 
        
        We have thus far established \eqref{eq:bound_triage_collab} in all cases where $i=0$. Building on this result, we note that the difference in the fourth term of \eqref{eq:bound_triage_collab_base}, with coefficient $\frac{(j-1)\mu_0}{d(j,k,\ell+1)}$, is lower bounded by $\min \left\{\frac{h_1}{\mu_1},\frac{h_2}{\mu_2}\right\}$, following analogous reasoning as in earlier steps, except that we now invoke the established result in place of the inductive hypothesis (see \eqref{eq:bound_triage_collab_base_mu0_terms}). Applying the same lower bound $\min\left\{\frac{h_1}{\mu_1},\frac{h_2}{\mu_2}\right\}$ to the differences in \eqref{eq:bound_triage_collab_base} associated with the coefficients $\frac{k\mu_1}{d(j,k,\ell+1)}$ and $\frac{\min\{\ell,C^G\}\mu_2}{d(j,k,\ell+1)}$, using \eqref{eq:one_more_collab}, and further employing the stronger result in \eqref{eq:one_more_queue_full} (in place of \eqref{eq:one_more_queue}) for the difference with coefficient $\frac{\mu_0}{d(j,k,\ell+1)}$, we conclude that \eqref{eq:bound_triage_collab_full} holds for all states with $i=0$.
        \item When $i \geq 1$, thinning the MDPs with a higher rate $d(j,k,\ell+1)$ yields
        \begin{align}
            \lefteqn{v(i+1,j-1,k,\ell+1) - v(i,j,k,\ell)}& \nonumber \\
            & \quad = \frac{h_2}{d(j,k,\ell+1)} + \frac{k\mu_1}{d(j,k,\ell+1)}\Big[v(i,j,k-1,\ell+1) - v(i-1,j+1,k-1,\ell)\Big] \nonumber \\
            & \quad \qquad + \frac{\min\{\ell,C^G\}\mu_2}{d(j,k,\ell+1)}\Big[v(i,j,k,\ell) - v(i-1,j+1,k,\ell-1)\Big] \nonumber \\
            & \quad \qquad+ \frac{(j-1)\mu_0}{d(j,k,\ell+1)}\Big[\min\{v(i+1,j-2,k+1,\ell+1),v(i+1,j-2,k,\ell+2)\} \nonumber \\
            & \qquad \qquad- \min\{v(i,j-1,k+1,\ell),v(i,j-1,k,\ell+1)\}\Big] \nonumber \\
            & \quad \qquad +\frac{\mu_2\mathbb{I}\{\ell<C^G\}}{d(j,k+1,\ell)}\Big[v(i,j,k,\ell) - v(i,j,k,\ell)\Big] \label{eq:bound_triage_collab_general} \\ 
            & \quad \qquad+ \frac{\mu_0}{d(j,k,\ell+1)}\Big[v(i+1,j-1,k,\ell+1)  - \min\{v(i,j-1,k+1,\ell),v(i,j-1,k,\ell+1)\}\Big]. \nonumber
        \end{align}
        The result in \eqref{eq:bound_triage_collab} follows by analogous reasoning as in \eqref{eq:bound_triage_collab_base}, with the only distinction being the application of the inductive hypothesis (invoked twice) for the terms with coefficients $\frac{k\mu_1}{d(j,k,\ell+1)}$ and $\frac{\min\{\ell,C^G\}\mu_2}{d(j,k,\ell+1)}$, in place of the use of \eqref{eq:one_more_collab}. 

        Building on this, the stronger bound in \eqref{eq:bound_triage_collab_full} follows by a similar argument as in the case $i=0$, with the key difference being the use of the established result in \eqref{eq:bound_triage_collab} (twice) for the terms with coefficients $\frac{k\mu_1}{d(j,k,\ell+1)}$ and $\frac{\min\{\ell,C^G\}\mu_2}{d(j,k,\ell+1)}$, rather than the reliance on \eqref{eq:one_more_collab}. \qedhere
    \end{enumerate}
\end{proof}

\begin{proof} [Proof of Statement \ref{state:lemma_diff_larger_mu1_ub} in Lemma \ref{lemma:diff}.] 
    We prove \eqref{eq:larger_mu1_ub} by induction on $M=2(i+j)+k+\ell$, which indicates the number of services remaining in state $(i,j,k,\ell)$. For the base case $i=k=\ell=0$ and $j=1$, i.e., $M=2$, we have $v(0,0,1,0) - v(0,0,0,1) = \frac{h_1}{\mu_1} - \frac{h_2}{\mu_2}$.
    We now establish the result for $M >2$, assuming it holds for $M-1$. There are two cases that distinguish the discussion. Observe that $\mu_1 \geq \mu_2$ implies $d(j-1,k+1,\ell) \geq d(j-1,k,\ell+1)$.
    \begin{enumerate}[label= \textbf{Case} \arabic*:, leftmargin=3\parindent]
        \item When $i=0$, we apply a thinning argument to the MDPs in \eqref{eq:larger_mu1_ub} with a higher rate $d(j-1,k+1,\ell)$:
        \begin{align}
            \lefteqn{v(0,j-1,k+1,\ell) - v(0,j-1,k,\ell+1)}& \nonumber \\
            & \quad = \frac{h_1-h_2}{d(j-1,k+1,\ell)} + \frac{k\mu_1}{d(j-1,k+1,\ell)}\Big[v(0,j-1,k,\ell) - v(0,j-1,k-1,\ell+1)\Big] \nonumber \\
            & \quad \qquad + \frac{\min\{\ell,C^G\} \mu_2}{d(j-1,k+1,\ell)}\Big[v(0,j-1,k+1,\ell-1) - v(0,j-1,k,\ell)\Big] \nonumber \\
            &\quad \qquad + \frac{(j-1)\mu_0}{d(j-1,k+1,\ell)} \Big[ \min\{v(0,j-2,k+2,\ell),v(0,j-2,k+1,\ell+1)\} \nonumber \\
            & \qquad \qquad - \min\{v(0,j-2,k+1,\ell+1),v(0,j-2,k,\ell+2)\} \Big] \nonumber \\
            & \quad \qquad+ \frac{\mu_2\mathbb{I}\{\ell <C^G\}}{d(j-1,k+1,\ell)}\Big[v(0,j-1,k,\ell) - v(0,j-1,k,\ell)\Big] \nonumber \\
            & \quad \qquad+ \frac{\mu_1-\mu_2\mathbb{I}\{\ell <C^G\}}{d(j-1,k+1,\ell)}\Big[v(0,j-1,k,\ell) - v(0,j-1,k,\ell+1)\Big]. \label{eq:larger_mu1_ub_base}
        \end{align}
        Suppose $v(0,j-2,k+1,\ell+1) \leq v(0,j-2,k,\ell+2)$. Replacing the first minimum in the second term with the upper bound $v(0,j-2,k+2,\ell)$ yields
        \begin{align}
            &\min\{v(0,j-2,k+2,\ell),v(0,j-2,k+1,\ell+1)\} - \min\{v(0,j-2,k+1,\ell+1),v(0,j-2,k,\ell+2)\} \nonumber \\
            &\quad \leq v(0,j-2,k+2,\ell)- v(0,j-2,k+1,\ell+1) \nonumber \\
            &\quad \leq \frac{h_1}{\mu_1} - \frac{h_2}{\mu_2}, \label{eq:larger_mu1_ub_base_mu0_terms}
        \end{align}
        where the last inequality follows from the inductive hypothesis. If $v(0,j-2,k+1,\ell+1) > v(0,j-2,k,\ell+2)$, the same upper bound in \eqref{eq:larger_mu1_ub_base_mu0_terms} holds by choosing an upper bound $v(0,j-2,k+1,\ell+1)$ in the first minimum and using the inductive hypothesis.
        
        Applying \eqref{eq:larger_mu1_ub_base_mu0_terms} and \eqref{eq:one_more_collab} to the differences associated with the coefficients $\frac{(j-1)\mu_0}{d(j-1,k+1,\ell)}$ and $ \frac{\mu_1-\mu_2\mathbb{I}\{\ell <C^G\}}{d(j-1,k+1,\ell)}$, respectively, and invoking the inductive hypothesis (twice) for the differences with coefficients $\frac{k\mu_1}{d(j-1,k+1,\ell)}$ and $\frac{\min\{\ell,C^G\} \mu_2}{d(j-1,k+1,\ell)}$ in \eqref{eq:larger_mu1_ub_base}, we obtain
        \begin{align}
            &\qquad v(0,j-1,k+1,\ell) - v(0,j-1,k,\ell+1) \nonumber \\
            & \leq  \frac{h_1 - h_2}{d(j-1,k+1,\ell)} + \frac{(j-1)\mu_0 + k\mu_1 + \min\{\ell,C^G\} \mu_2}{d(j-1,k+1,\ell)}\Big( \frac{h_1}{\mu_1} - \frac{h_2}{\mu_2}\Big) + \frac{\mu_1-\mu_2\mathbb{I}\{\ell <C^G\}}{d(j-1,k+1,\ell)}\Big(-\frac{h_2}{\mu_2}\Big) \nonumber \\
            & \leq  \frac{h_1 - h_2}{d(j-1,k+1,\ell)} + \frac{(j-1)\mu_0 + k\mu_1 + \min\{\ell,C^G\} \mu_2}{d(j-1,k+1,\ell)}\Big( \frac{h_1}{\mu_1} - \frac{h_2}{\mu_2}\Big)  + \frac{\mu_1-\mu_2}{d(j-1,k+1,\ell)}\Big(-\frac{h_2}{\mu_2}\Big) \nonumber \\
            & \quad  = \frac{h_1}{\mu_1} - \frac{h_2}{\mu_2}, \label{eq:larger_mu1_ub_base_result}
        \end{align}where the second inequality follows from the bound $\mu_1 - \mu_2\mathbb{I}\{\ell < C^G\} \geq \mu_1 - \mu_2$, and the final step uses the expression $(j-1)\mu_0 + (k+1)\mu_1 + \min\{\ell,C^G\} \mu_2 = d(j-1,k+1,\ell)$.
        \item When $i \geq 1$, again thinning the MDPs with a higher rate $d(j-1,k+1,\ell)$ yields
        \begin{align}
            \lefteqn{v(i,j-1,k+1,\ell) - v(i,j-1,k,\ell+1) }& \nonumber\\
            & \quad = \frac{h_1-h_2}{d(j-1,k,\ell+1)} + \frac{k\mu_1}{d(j-1,k+1,\ell)}\Big[v(i-1,j,k,\ell) - v(i-1,j,k-1,\ell+1)\Big] \nonumber  \\
            & \quad \qquad + \frac{\min\{\ell,C^G\} \mu_2}{d(j-1,k+1,\ell)}\Big[v(i-1,j,k+1,\ell-1) - v(i-1,j,k,\ell)\Big]  \nonumber \\
            &\quad \qquad +\frac{(j-1)\mu_0}{d(j-1,k+1,\ell)} \Big[ \min\{v(i,j-2,k+2,\ell),v(i,j-2,k+1,\ell+1)\}  \nonumber \\
            & \qquad \qquad - \min\{v(i,j-2,k+1,\ell+1),v(i,j-2,k,\ell+2)\} \Big]  \nonumber \\
            & \quad \qquad+ \frac{\mu_2\mathbb{I}\{\ell <C^G\}}{d(j-1,k+1,\ell)}\Big[v(i-1,j,k,\ell) - v(i-1,j,k,\ell)\Big]  \nonumber \\
            & \quad \qquad+ \frac{\mu_1-\mu_2\mathbb{I}\{\ell <C^G\}}{d(j-1,k+1,\ell)}\Big[v(i-1,j,k,\ell) - v(i,j-1,k,\ell+1)\Big]. \label{eq:larger_mu1_ub_general}
        \end{align}
        Consider the difference in the last term in \eqref{eq:larger_mu1_ub_general} with the coefficient $\frac{\mu_1-\mu_2\mathbb{I}\{\ell <C^G\}}{d(j-1,k+1,\ell)}$.
        \begin{enumerate} [label= \textbf{Subcase} (\alph*):, leftmargin=2.2\parindent]
            \item If $\frac{h_1}{\mu_1} < \frac{h_2}{\mu_2}$, referring to \eqref{eq:bound_triage_collab} gives
            \begin{align}
                \lefteqn{v(i-1,j,k,\ell) - v(i,j-1,k,\ell+1)}& \nonumber \\
                &\quad = \big(v(i-1,j,k,\ell) - v(i-1,j,k-1,\ell+1)\big) + \big(v(i-1,j,k-1,\ell+1) - v(i,j-1,k,\ell+1)\big) \nonumber \\
                &\quad \leq \left(\frac{h_1}{\mu_1} - \frac{h_2}{\mu_2}\right)  + \left(-\frac{h_1}{\mu_1}\right) \nonumber \\
                &\quad = - \frac{h_2}{\mu_2}, \label{eq:larger_mu1_ub_general_extra}
            \end{align}where the inequality applies the inductive hypothesis and \eqref{eq:bound_triage_noncollab}.
            \item If $\frac{h_1}{\mu_1} \geq \frac{h_2}{\mu_2}$, the inequality \eqref{eq:larger_mu1_ub_general_extra} follows by directly referring to \eqref{eq:bound_triage_collab}.
        \end{enumerate}
        The result follows by a comparable reasoning as obtaining \eqref{eq:larger_mu1_ub_base_result}, with the only distinction being the application of \eqref{eq:larger_mu1_ub_general_extra} in place of \eqref{eq:one_more_collab} for the difference with the coefficient $\frac{\mu_1-\mu_2\mathbb{I}\{\ell <C^G\}}{d(j-1,k+1,\ell)}$.  \qedhere
    \end{enumerate}
\end{proof}
    
\subsection{Proofs of supporting results} \label{sec:proof_support}
\begin{proof} [Proof of Proposition \ref{prop:larger_mu1_threshold}.]
    Without loss of generality, we assume Assumption~\ref{assm:preferred_collab} holds; otherwise, the result follows directly from Statement~\ref{state:lemma_diff_larger_mu1_ub} in Lemma~\ref{lemma:diff}, taking $N(j,k,\ell) = 0$. Furthermore, we focus on the case $i \geq 1$, since the result aims to characterize the behavior of $D(i,j,k,\ell)$ for all $i$ sufficiently large. 
    Note that $d(j-1,k+1,\ell) \geq d(j-1,k,\ell+1)$ since $\mu_1 \geq \mu_2$. 
    Thinning the MDPs in \eqref{eq:larger_mu1_threshold} with a higher rate $d(j-1,k+1,\ell)$ then yields 
    \begin{align}
        \lefteqn{v(i, j-1, k+1, \ell) - v(i, j-1, k, \ell+1)} \nonumber \\
        & \quad = \frac{h_1- h_2}{d(j-1,k+1,\ell)} + \frac{k\mu_1}{d(j-1,k+1,\ell)} \Big[v(i-1, j, k, \ell) - v(i-1, j, k-1, \ell+1)\Big] \nonumber \\
        & \quad + \frac{\min\{\ell,C^G\} \mu_2}{d(j-1,k+1,\ell)} \Big[v(i-1, j, k+1, \ell -1) -  v(i-1, j, k, \ell)\Big] \nonumber \\
        & \quad + \frac{(j-1)\mu_0}{d(j-1,k+1,\ell)} \Big[\min\{v(i, j-2, k+2, \ell),v(i, j-2, k+1, \ell+1)\} \nonumber \\
        &\qquad \qquad - \min\{v(i, j-2, k+1, \ell+1),v(i, j-2, k, \ell+2)\}\Big] \nonumber \\
        & \quad + \frac{\mu_2\mathbb{I}\{\ell<C^G\}}{d(j-1,k+1,\ell)}\Big[v(i-1,j,k,\ell) - v(i-1,j,k,\ell)\Big]  \nonumber \\
        & \quad + \frac{\mu_1 - \mu_2\mathbb{I}\{\ell<C^G\}}{d(j-1,k+1,\ell)} \Big[v(i-1, j, k, \ell) - v(i, j-1, k, \ell+1)\Big]. \label{eq:larger_mu1_threshold_general}
    \end{align}
    Consider the difference in the term with coefficient $\frac{(j-1)\mu_0}{d(j-1,k+1,\ell)}$ in \eqref{eq:larger_mu1_threshold_general}.
    Suppose $v(i, j-2, k+1, \ell+1) \leq v(i, j-2, k, \ell+2)$, replacing the first minimum with an upper bound $v(i, j-2, k+2, \ell)$ yields
    \begin{align}
        & \min\{v(i, j-2, k+2, \ell),v(i, j-2, k+1, \ell+1)\} - \min\{v(i, j-2, k+1, \ell+1),v(i, j-2, k, \ell+2)\} \nonumber \\
        &\quad \leq v(i, j-2, k+2, \ell) - v(i, j-2, k+1, \ell+1) \nonumber\\
        & \quad \leq \frac{h_1}{\mu_1} - \frac{h_2}{\mu_2}, \label{eq:larger_mu1_threshold_general_mu0_terms}
    \end{align}where the last step applies \eqref{eq:larger_mu1_ub}. If $v(i, j-2, k+1, \ell+1) > v(i, j-2, k, \ell+2)$, then the same bound in \eqref{eq:larger_mu1_threshold_general_mu0_terms} follows by taking the upper bound $v(i, j-2, k+1, \ell+1)$ in the first minimum and applying \eqref{eq:larger_mu1_ub}.
    
    Applying \eqref{eq:larger_mu1_threshold_general_mu0_terms} and \eqref{eq:bound_triage_collab_full} to the differences with coefficients $\frac{(j-1)\mu_0}{d(j-1,k+1,\ell)}$ and $\frac{\mu_1}{d(j-1,k+1,\ell)}$, respectively, and invoking \eqref{eq:larger_mu1_ub} twice for the terms in \eqref{eq:larger_mu1_threshold_general} with coefficients $\frac{k\mu_1}{d(j-1,k+1,\ell)}$ and $\frac{C^G\mu_2}{d(j-1,k+1,\ell)}$, we obtain
    \begin{align*}
        \lefteqn{v(i, j-1, k+1, \ell) - v(i, j-1, k, \ell+1)} \nonumber \\
        &\quad \leq \frac{h_1- h_2}{d(j-1,k+1,\ell)} + \frac{(j-1)\mu_0 + k\mu_1 + \min\{\ell,C^G\} \mu_2}{d(j-1,k+1,\ell)} \left(\frac{h_1}{\mu_1} - \frac{h_2}{\mu_2}\right)\\
        &\quad \qquad - \frac{\mu_1 - \mu_2\mathbb{I}\{\ell<C^G\}}{d(j-1,k+1,\ell)}\left(\frac{h_2}{\mu_2} + \frac{\mu_0}{d(j,k,\ell+1)} \left(\frac{(i+j)h_0}{C^p \mu_0} + \frac{i h_0}{C^p \mu_1} \right)\right)\\
        &\quad \leq \left(\frac{h_1}{\mu_1} - \frac{h_2}{\mu_2}\right) - \frac{\mu_1 - \mu_2\mathbb{I}\{\ell<C^G\}}{d(j-1,k+1,\ell)} \frac{\mu_0}{d(j,k,\ell+1)} \left(\frac{(i+j)h_0}{C^p \mu_0} + \frac{i h_0}{C^p \mu_1} \right)\\
        &\quad \leq \left(\frac{h_1}{\mu_1} - \frac{h_2}{\mu_2}\right) - \frac{\mu_1 - \mu_2\mathbb{I}\{\ell<C^G\}}{d(j-1,k+1,\ell)} \frac{\mu_0}{d(j,k,\ell+1)} \left(\frac{h_0}{C^p \mu_0} + \frac{h_0}{C^p \mu_1} \right)i,
    \end{align*}where the second step follows by noticing $\mu_1 - \mu_2\mathbb{I}\{\ell < C^G\} \geq \mu_1 - \mu_2$ and $d(j-1,k+1,\ell) = (j-1)\mu_0 + (k+1)\mu_1 + \min\{\ell,C^G\} \mu_2$, and the last inequality applies $\mu_1 - \mu_2\mathbb{I}\{\ell < C^G\} \geq \mu_1 - \mu_2 \geq 0$ by assumption.

    Define
    \begin{align*}
        C_1(j,k,\ell) &:= \frac{\mu_1 - \mu_2\mathbb{I}\{\ell<C^G\}}{d(j-1,k+1,\ell)} \frac{\mu_0}{d(j,k,\ell+1)} \left(\frac{h_0}{C^p \mu_0} + \frac{h_0}{C^p \mu_1} \right),\\
        C_2 &:= \frac{h_1}{\mu_1} - \frac{h_2}{\mu_2}.
    \end{align*}
    Observe that both $C_1(j,k,\ell)$ and $C_2$ are independent of $i$, and $C_2 > 0$ by assumption. 
    If either 1)$\mu_1 \geq \mu_2$ with $\ell \geq C^G$ or $\mu_1 > \mu_2$, we have $\mu_1 - \mu_2\mathbb{I}\{\ell<C^G\} > 0$, which implies that $C_1(j,k,\ell) > 0$. The desired result then follows by choosing
    \begin{align*}
        N(j,k,\ell) := \frac{C_2}{C_1(j,k,\ell)}.
    \end{align*}    
\end{proof}

\begin{proof} [Proof of Proposition \ref{prop:always_collab}.]
    The proof proceeds with induction on $M = 2(i+j)+k+\ell$, which captures the number of services remaining in state $(i,j,k,\ell)$. For the base case $i=k=\ell=0$ and $j=1$, which implies $M=1$, we have
    \begin{align*}
        v(0,0,1,0) - v(0,0,0,1) =  \frac{h_1}{\mu_1} - \frac{h_2}{\mu_2} \geq 0.
    \end{align*}
    Now assume the result holds at $M-1$ and consider it at $M$, where $M > 1$. There are two cases to discuss based on whether $i = 0$ or $i \geq 1$, which leads to different state transitions (and hence Bellman equations).
    \begin{enumerate} [label= \textbf{Case} \arabic*:, leftmargin=3\parindent]
        \item When $i = 0$, we have $d(j-1,k,\ell+1) \geq d(j-1,k+1,\ell)$ for $\ell < C^G$ under the assumption that $\mu_2 \geq \mu_1$. Thinning the MDPs in \eqref{eq:always_collab} with a higher rate $d(j-1,k,\ell+1)$ gives
        \begin{align}
            \lefteqn{v(0,j-1,k+1,\ell) - v(0,j-1,k,\ell+1)}& \nonumber \\
            & \quad = \frac{h_1-h_2}{d(j-1,k+1,\ell)} + \frac{k\mu_1}{d(j-1,k+1,\ell)}\Big[v(0,j-1,k,\ell) - v(0,j-1,k-1,\ell+1)\Big] \nonumber \\
            & \quad \qquad + \frac{\ell \mu_2}{d(j-1,k+1,\ell)}\Big[v(0,j-1,k+1,\ell-1) - v(0,j-1,k,\ell)\Big] \nonumber \\
            & \quad \qquad +\frac{(j-1)\mu_0}{d(j-1,k+1,\ell)} \Big[\min\{v(0,j-2,k+2,\ell),v(0,j-2,k+1,\ell+1)\} \nonumber \\
            & \qquad \qquad - \min\{v(0,j-2,k+1,\ell+1),v(0,j-2,k,\ell+2)\} \Big] \nonumber \\
            & \quad \qquad+ \frac{\mu_1}{d(j-1,k+1,\ell)}\Big[v(0,j-1,k,\ell) - v(0,j-1,k,\ell)\Big] \nonumber \\
            & \quad \qquad+ \frac{\mu_2-\mu_1}{d(j-1,k+1,\ell)}\Big[v(0,j-1,k+1,\ell) - v(0,j-1,k,\ell)\Big].\label{eq:always_collab_base}
        \end{align}
        Consider the term in \eqref{eq:always_collab_base} with coefficient $\frac{(j-1)\mu_0}{d(j-1,k+1,\ell)}$. Replacing the second minimum with the upper bound $v(0,j-2,k+1,\ell+1)$ yields
        \begin{align*}
            &\min\{v(0,j-2,k+2,\ell),v(0,j-2,k+1,\ell+1)\} - \min\{v(0,j-2,k+1,\ell+1),v(0,j-2,k,\ell+2)\} \\
            &\quad \geq \min\{v(0,j-2,k+2,\ell),v(0,j-2,k+1,\ell+1)\} - v(0,j-2,k+1,\ell+1)  \\
            &\quad = \min\{v(0,j-2,k+2,\ell)- v(0,j-2,k+1,\ell+1),0\} \\
            &\quad \geq 0, 
        \end{align*}where the last step uses the inductive hypothesis. 
        Applying the inductive hypothesis twice again shows that the terms with coefficients $\frac{k\mu_1}{d(j-1,k+1,\ell)}$ and $\frac{\ell \mu_2}{d(j-1,k+1,\ell)}$ in \eqref{eq:always_collab_base} are non-negative. Together with \eqref{eq:one_more_noncollab} for the term with coefficient $\frac{\mu_2-\mu_1}{d(j-1,k+1,\ell)}$, we obtain
        \begin{align*}
        v(0,j-1,k+1,\ell) - v(0,j-1,k,\ell+1) \geq \frac{h_1-h_2}{d(j-1,k+1,\ell)} + \frac{\mu_2-\mu_1}{d(j-1,k+1,\ell)}\frac{h_1}{\mu_1} \geq 0.
    \end{align*}
        \item If $i \geq 1$, thinning the MDPs in \eqref{eq:always_collab} with a higher rate $d(j-1,k,\ell+1)$ results in
        \begin{align*}
            & v(i,j-1,k+1,\ell) - v(i,j-1,k,\ell+1)\\
            & \quad = \frac{h_1-h_2}{d(j-1,k,\ell+1)} + \frac{k\mu_1}{d(j-1,k,\ell+1)}\Big[v(i-1,j,k,\ell) - v(i-1,j,k-1,\ell+1)\Big]\\
            & \quad \qquad + \frac{\ell \mu_2}{d(j-1,k,\ell+1)}\Big[v(i-1,j,k+1,\ell-1) - v(i-1,j,k,\ell)\Big]\\
            &\quad \qquad +\frac{(j-1)\mu_0}{d(j-1,k,\ell+1)} \Big[ \min\{v(i,j-2,k+2,\ell),v(i,j-2,k+1,\ell+1)\}\\
            & \qquad \qquad - \min\{v(i,j-2,k+1,\ell+1),v(i,j-2,k,\ell+2)\} \Big] \\
            & \quad \qquad+ \frac{\mu_1}{d(j-1,k,\ell+1)}\Big[v(i-1,j,k,\ell) - v(i-1,j,k,\ell)\Big]\\
            & \quad \qquad+ \frac{\mu_2-\mu_1}{d(j-1,k,\ell+1)}\Big[v(i,j-1,k+1,\ell) - v(i-1,j,k,\ell)\Big].
        \end{align*}Notice that the non-negativity of the terms with coefficients $\frac{(j-1)\mu_0}{d(j-1,k,\ell+1)}$, $\frac{k\mu_1}{d(j-1,k,\ell+1)}$, and $\frac{\ell \mu_2}{d(j-1,k,\ell+1)}$ follows analogously to the case where $i = 0$. In conjunction with the application of \eqref{eq:bound_triage_noncollab} to the difference with coefficient $\frac{\mu_2-\mu_1}{d(j-1,k+1,\ell)}$ in \eqref{eq:always_collab_base} yields
        \begin{align*}
            v(i,j-1,k+1,\ell) - v(i,j-1,k,\ell+1) \geq \frac{h_1-h_2}{d(j-1,k+1,\ell)} + \frac{\mu_2-\mu_1}{d(j-1,k+1,\ell)}\frac{h_2}{\mu_2}\geq 0.
        \end{align*}
    \end{enumerate}
    If $\frac{h_1}{\mu_1} > \frac{h_2}{\mu_2}$ holds strictly, i.e., Assumption \ref{assm:preferred_collab} is satisfied, a similar argument shows that the inequality in \eqref{eq:always_collab} also holds strictly.
\end{proof}

\begin{proof} [Proof of Corollary \ref{cor:enough_GP}.]
    Recall that $(i,j,k,\ell) \in \X_D$ implies $j \geq 1$ and $(i,j,k,\ell) \in \X$. Since $C^G \geq C^p$, we have $\ell \leq C^p - j - k \leq C^p - 1$, and thus $\ell < C^G$ must hold. Consequently, if $\frac{h_1}{\mu_1} > \frac{h_2}{\mu_2}$, the Bellman equations for the case where $C^G \geq C^p$ coincide with those for $C^G < C^p$ and $\ell < C^G$. A similar argument applies when establishing the corresponding statements in Propositions \ref{prop:always_collab} and \ref{prop:larger_mu1_threshold}. 
    
    On the other hand, if $\frac{h_1}{\mu_1} \leq \frac{h_2}{\mu_2}$ under $C^G \geq C^p$, the impacts of collaborative and non-collaborative care on the system are symmetric. This symmetry in system dynamics implies a corresponding symmetry in the preference for collaboration across the parameter space.
\end{proof}

\begin{proof} [Proof of Proposition \ref{prop:noncollab}.]
Consider the case where $\ell \geq C^G$ first. Define the following sequence.
    Let $\{S_{m,a}, m \geq 1, a \in \{0,1,2\}\}$ be the service time of the $m^{\text{th}}$ patient when/if they are served at Station $n$, where $n = 0, 1, 2$ denotes the station triage, non-collaborative service and collaborative service, respectively.
    Notice that any patient just beyond the triage phase  can begin service immediately (by not requesting collaboration), or join the collaboration station where other patients, along with their assigned NPs, may already be in service or waiting. 
    
    Consider the state $(i, j, k, \ell)$, where $j \geq 1$. When a triage service is complete, the decision-maker needs to compare $v(i, j-1, k+1, \ell)$ and $v(i, j-1, k, \ell+1)$. Under any fixed policy, an accounting of the remaining costs associated with each patient that has just completed triage include the costs associated with any time in service (at one of the two remaining stations) and their impact on the waiting time of patients in the triage queue by virtue of the NP being away from the triage station.
    
    Since non-collaborative service begins immediately after the assignment, the cost associated with that service if this is the $m^{\text{th}}$ patient is $h_1 S_{m,1}$.
    Similarly, if the patient is assigned to get collaborative care.
    For $\ell \geq C^G$, the $m^{\text{th}}$ patient needs to wait for the $\ell+1-C^G$ patients in front of them before beginning service, plus their own service time. Renumbering those $\ell+1-C^G$ patients to be $\{1, 2, \ldots, \ell+1-C^G\}$ yields that the total cost contributed by patient $m$ is $h_2 (S_{m,2} + \sum_{b=1}^{\ell+1-C^G} S_{b,2})$. Taking expectations yields
    \begin{align*}
        &\text{cost of non-collaborative service}  = \frac{h_1}{\mu_1} \\
            & \quad \leq \frac{\ell + 1}{C^G}\frac{h_2}{\mu_2} =  \big( \frac{\ell + 1 - C^G}{C^G} + 1 \big) \frac{h_2}{\mu_2} = \, \text{cost of collaborative service},
    \end{align*}
    where the inequality follows by assumption. Notice that the collaborative service rate of the $\ell+1-C^G$ patients is given by $C^G\mu_2$.
    In addition to this cost, the decision to choose to serve collaboratively or without collaboration has an impact on the upstream patients. In particular, the time away from the triage station reduces the rate at which triage can be performed. Recall, the assumption $\frac{1}{\mu_1} \leq \frac{\ell +1}{C^G\mu_2} = \big(\frac{\ell + 1 - C^G}{C^G} + 1 \big) \frac{1}{\mu_2}$ says that the expected time the NP works at the non-collaborative station (before returning to the triage station) is less than it would be if they were assigned to the collaborative station. This means that the expected amount of work they can do if assigned to the non-collaborative station is higher than that if assigned to the collaborative station.
    Since the expected total cost is lower when assigned to Station 1, which is free of queueing effects, the result in Statement \ref{state:noncollab_queue_in_collab} follows.

    The case where $\ell < C^G$ follows by a comparable argument (or by applying Statement \ref{state:lemma_diff_larger_mu1_ub} in Lemma \ref{lemma:diff}).
\end{proof}

\subsection{Proofs of the results in Heuristic Design}
\label{sec:proof_heuristic}
\begin{proof} [Proof of Proposition \ref{prop:heur_no_queue_in_collab}.]
    Since both systems operate under the \textit{deterministic model assumptions} in Assumption \ref{assum:deterministic}, downstream services commence immediately after the decision point, with each requiring exactly $\frac{1}{\mu_n}$ units of time at Station $n$ for $n = 1,2$. This leads to immediate time and cost difference to complete downstream services equal to $\frac{1}{\mu_1}-  \frac{1}{\mu_2}$ and $\frac{h_1}{\mu_1}-  \frac{h_2}{\mu_2} = b$, respectively.
    Moreover, with two systems controlled by the \textit{always-collaborative policy $\pi^{co\ell}$}, the total cost difference incurred downstream equals the corresponding immediate cost difference, which confirms \eqref{eq:H12}.
    It remains to calculate $H_0(i,j,k,\ell)$ based on $h_0$.
    
    The holding cost at Station 0, starting at state $s \in \X$ under $\pi^{co\ell}$ until all work is completed is
    \begin{align*}
        \left.\int_{0}^{\infty}h_0\left((Q^{\pi^{co\ell}}_{0,0}(t) + Q^{\pi^{co\ell}}_{0,1}(t)\right) dt \right\vert_{s} = h_0 \left.\int_{0}^{\infty} Q^{\pi^{co\ell}}_{0}(t)dt\right\vert_{s},
    \end{align*}
    where $Q_0^{\pi^{co\ell}}(t):=Q^{\pi^{co\ell}}_{0,0}(t) + Q^{\pi^{co\ell}}_{0,1}(t), \forall t \geq 0$ denotes the total number of patients in the upstream phase (including both queueing and receiving the triage service). Note that $Q_0^{\pi^{co\ell}}(t)$ now operates following the \textit{deterministic model assumptions}, in contrast to the stochastic evolution described in \eqref{eq:total_cost_sto}.
    Therefore,
    \begin{align}
        H_0(i,j,k,\ell) &= 
        h_0 \cdot D_A(i,j,k,\ell), \label{eq:H0_area}
    \end{align}where
    \begin{align}
        D_A(i,j,k,\ell)&=\left. \int_{0}^{\infty} \Big(Q^{\pi^{co\ell}}_{0}(t) \right\vert_{(i,j-1,k+1,\ell)} - \left. Q^{\pi^{co\ell}}_{0} \right\vert_{(i,j-1,k,\ell+1)}(t) \Big)dt \label{eq:D_area}
    \end{align} 
    is the approximation of $c_0^1-c_0^2$ (given in \eqref{eq:D0_and_D12}) under the \textit{deterministic model assumptions} and the \textit{always-collaborative policy $\pi^{co\ell}$}.
    
    It remains to compute $D_A(i,j,k,\ell)$ by analyzing the evolution of the total number of patients in the upstream phase for both systems.
    Each system starts with $i + j - 1$ patients and proceeds until the upstream is fully cleared.
    Recall that System 2 starts at state $(i, j-1, k, \ell+1)$. The $k$ decision makers initially at Station 1 (the non-collaborative station) become available at time $\frac{1}{\mu_1}$ and immediately return to the upstream phase to interact with the first $k$ patients at the front of the queue (among the $i$ waiting patients). This occurs no later than $\frac{1}{\mu_2}$ (since $\mu_1 \geq \mu_2$), when $\text{NP}^*_{(2)}$ and other $\ell$ NPs complete their current service and leave Station 2 (the collaborative station). 
    As a result, these $\ell+1$ NPs who start at Station 2, proceeds to serve the next $\ell+1$ patients among the remaining $i - k$ in the queue.
    In parallel, in System 1, $\text{NP}^*_{(1)}$ and other $k$ decision makers initially at Station 1 return to the upstream Station 0 at time $\frac{1}{\mu_1}$ and begin serving the first $k+1$ patients among the $i$ in queue, while the $\ell$ NPs who start at Station 2 return to upstream at time $\frac{1}{\mu_2}$.

    Without loss of generality, we align the patient order served by $\text{NP}^*$ and non-$\text{NP}^*$ decision makers across the two systems: assume that the $k$ (resp. $\ell$) decision makers from the non-collaborative (resp. collaborative) station serve the first $k$ (resp. $\ell$) patients among the initial $i$ (resp. $i-k-1$) in the queue. This ensures that the first patient that $\text{NP}^*$ in each system interacts is the first among the remaining $i-k$. 
    Under the \textit{always collaborative policy} ($a=1$ for all NPs), a repeating pattern emerges in which $\text{NP}^*$ (in each system) contributes one service for every $C^p$ triage services rendered. Consequently, $\text{NP}^*_{(1)}$ and $\text{NP}^*_{(2)}$ each provides triage services exactly $\left\lceil \frac{i - k}{C^p} \right\rceil$ times until all upstream patients are completed. 
    
    Under this service order, whenever a decision is made by a decision maker other than $\text{NP}^*_{(1)}$ in System 1, thus reducing the number of upstream patients by one, there is a corresponding decision maker in System 2 (other than $\text{NP}^*_{(2)}$) who takes the same action, specifically $a = 1$, at the same time, yielding an identical reduction. 
    In contrast, actions taken by $\text{NP}^*_{(1)}$ in System 1 occur earlier than those by $\text{NP}^*_{(2)}$ in System 2 exactly by $\left(\frac{1}{\mu_0} + \frac{1}{\mu_2}\right) - \left(\frac{1}{\mu_0} + \frac{1}{\mu_1}\right) = \frac{1}{\mu_2} - \frac{1}{\mu_1}$
    units of time. Each such action decreases the upstream population by one in System 1, and does so $\frac{1}{\mu_2} - \frac{1}{\mu_1}$ units of time ahead of the corresponding reduction in System 2, contributing a difference in the integral (i.e., signed area) between the two systems equal to 
    \begin{align*}
        1 \cdot \left(\frac{1}{\mu_1}-  \frac{1}{\mu_2}\right) = \frac{1}{\mu_1}-  \frac{1}{\mu_2}.
    \end{align*}
    So far, we know that $\text{NP}^*_{(1)}$ and $\text{NP}^*_{(2)}$ each provide triage services $\left\lceil \frac{i - k}{C^p} \right\rceil$ times until all upstream patients are cleared. Moreover, each instance in which the timing of their triage services (or, equivalently, their decision points) differs contributes an incremental difference of $\frac{1}{\mu_1}-  \frac{1}{\mu_2}$ in the integral. 
    Therefore, the total difference between the two integrals in \eqref{eq:H0_area} is
    \begin{align}
        D_A(i,j,k,\ell)&= \left\lceil\frac{i-k}{C^p}\right\rceil \left(\frac{1}{\mu_1}-  \frac{1}{\mu_2}\right). \label{eq:diff_upstream_area}
    \end{align}
    Figure \ref{fig:deterministic} gives an illustrative example for computing $D_A(i,j,k,\ell)$ as given by \eqref{eq:diff_upstream_area}, with parameters $C^G = C^p = 4$, $\mu_0 = 1$, $\mu_1 = 2$, and $\mu_2 = 0.5$. Systems 1 and 2 start in states $(18,1,2,1)$ and $(18,1,1,2)$, respectively, corresponding to the initial decision state $(i, j, k, \ell) = (18, 2, 1, 1)$. In this case, the signed area of each rectangle is $\frac{1}{\mu_1} - \frac{1}{\mu_2} = -1.5$, and there are $\left\lceil\frac{i-k}{C^p}\right\rceil = \left\lceil\frac{18-1}{4}\right\rceil = 5$ such rectangles.
    
     \begin{figure}[htbp]
    \centering
        \includegraphics[scale=0.33]{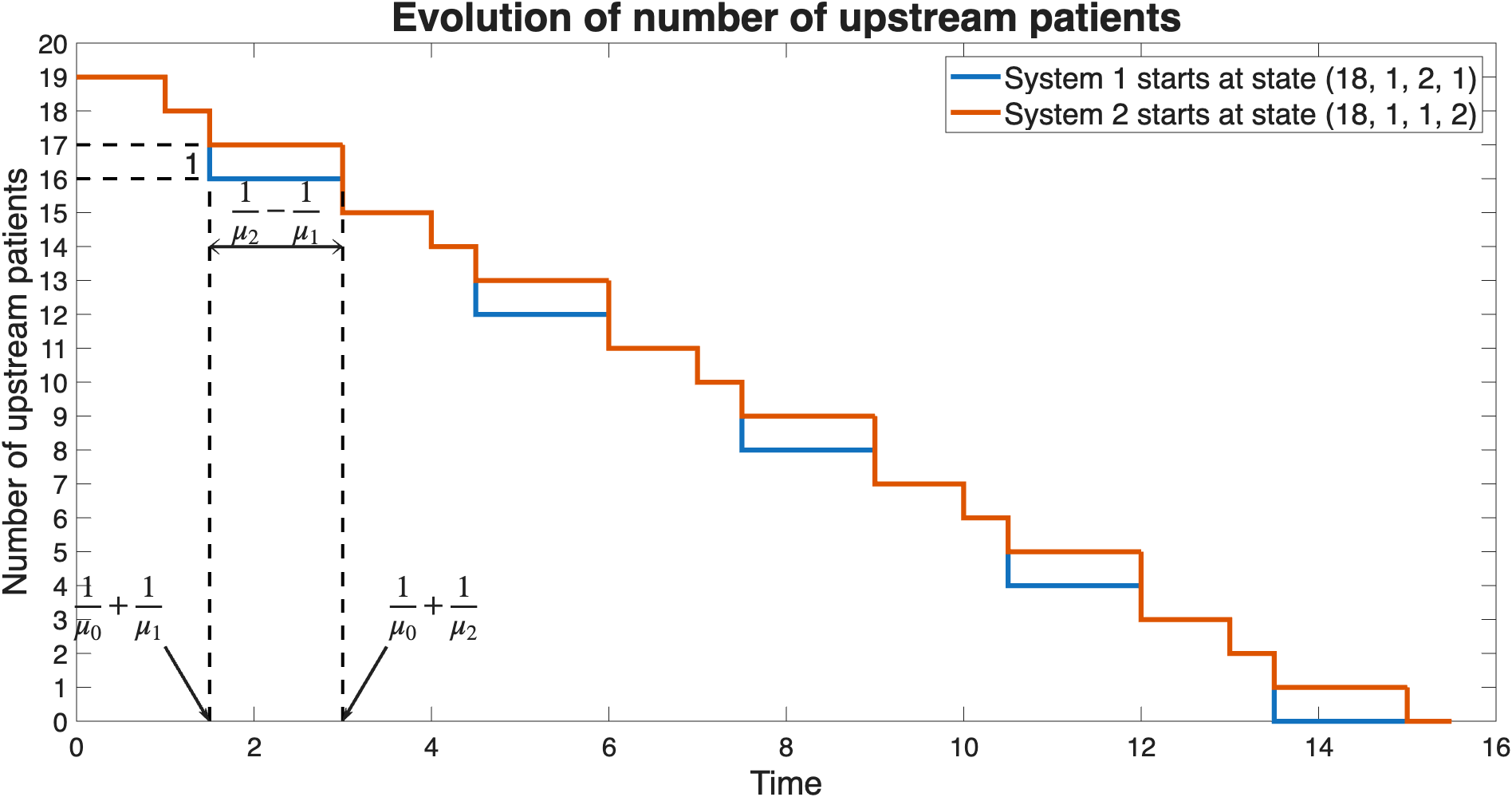}
        \captionsetup{font=small}
    \caption{Evolution of the number of upstream patients under \textit{deterministic model assumptions} and \textit{always collaborative policy $\pi^{co\ell}$}. System 1: $(18,1,2,1)$ and System 2: $(18,1,1,2)$.} \label{fig:deterministic}
    \end{figure}  
    Substituting \eqref{eq:diff_upstream_area} into \eqref{eq:H0_area} then yields the desired result in \eqref{eq:H0}.
\end{proof}

\begin{proof} [Proof of Proposition \ref{prop:heur_queue_in_collab_small_mu0}]
    Notice that if the decision is to seek collaborative care for the current patient ($a=1$) at $(i,j,k,\ell)$, where $\ell \geq C^G$, there are $\ell$ patients ahead (including both in line and receiving service). Under the \textit{modified deterministic model assumptions} given in Assumption \ref{assum:deterministic_initial_block}, this leads to a time difference between Systems 1 and 2 of $\frac{1}{\mu_1} - \frac{\ell+1}{C^G \mu_2}$ and a corresponding cost difference of $\frac{h_1}{\mu_1} - \frac{\ell+1}{C^G} \frac{h_2}{\mu_2} = b_\ell$.
    Moreover, since both systems operate under the \textit{always collaborative policy} $\pi^{co\ell}$ given in Assumption \ref{assm:always-collab}, the total downstream cost difference equals this immediate cost difference, thereby confirming \eqref{eq:small_mu0_H12}.

    The expression in \eqref{eq:H0_area} remains valid for computing $H_0^0(i,j,k,\ell)$ and again reduces to evaluating the integral difference $D_A(i,j,k,\ell)$ in \eqref{eq:D_area} (allowing for Assumption \ref{assum:deterministic} replaced by Assumption \ref{assum:deterministic_initial_block}). The proof builds upon Proposition~\ref{prop:heur_no_queue_in_collab} via a similar line of reasoning. Under the modeling assumptions and the always-collaborative policy, the sequence of the NPs completing downstream service and returning to upstream service repeats until all upstream jobs are cleared.
 
    Note that the number of upstream jobs may not be divisible by $C^p$, resulting in an incomplete final cycle in which some NPs perform exactly one fewer service than others. As in Proposition \ref{prop:heur_no_queue_in_collab}, we group NPs across the two systems into \textbf{pairs}, where each pair serves the same sequence of upstream patients (from the front of the queue to the end) across both systems. The problem then reduces to computing the cumulative difference in integrals incurred by each pair's triage actions and summing over all pairs.
    
    For any pair $m$, $m = 1, 2, \ldots, C^p$, suppose the first patient served by this pair is the first among the $i_m$ upstream patients. Then, due to the repeating $C^p$-length cycle, the total number of services performed by this pair is $\left\lceil\frac{i_m}{C^p}\right\rceil$. Since each triage completion by a pair reduces the upstream population by one in both systems, but possibly occurs at a time offset, say $d_m$. Each action contributes an incremental difference of $1 \cdot d_m = d_m$ to the integral. Therefore, the total contribution of this pair to the integral difference in \eqref{eq:D_area} is 
    \begin{align}
        \left\lceil\frac{i_m}{C^p}\right\rceil \cdot d_m. \label{eq:pair_m}
    \end{align}
    Summing over all $C^p$ pairs yields the final expression for $D_A(i,j,k,\ell)$ in \eqref{eq:D_area}:
    \begin{align}
        D_A(i,j,k,\ell) = \sum_{m=1}^{C^p} \left\lceil\frac{i_m}{C^p}\right\rceil \cdot d_m. \label{eq:D_area_sum}
    \end{align}
    
    To obtain \eqref{eq:small_mu0_H0}, it remains to calculate $i_m$ and $d_m$. Recall that Systems 1 and 2 begin in states $(i, j - 1, k + 1, \ell)$ and $(i, j - 1, k, \ell + 1)$, respectively. For notational clarity, define Pair $m$ as the pair of NPs in Systems 1 and 2 who are the $m^{\text{th}}$ to complete a triage service for the first time in their respective systems. Ties are permitted, and their relative ordering is inconsequential. 
    The subsequent analysis distinguishes between two cases: $\mu_1 \geq \mu_2$ and $\mu_1 < \mu_2$.

    \begin{enumerate} [label= \textbf{Case} \arabic*:,ref={ \arabic*}, leftmargin=2.2\parindent]
        \item Suppose $\frac{1}{\mu_1} \leq \frac{1}{\mu_2}$, or equivalently, $\mu_1 \geq \mu_2$. The behavior of the $C^p$ pairs is characterized as follows:
        \begin{itemize}[leftmargin=0pt, labelsep=0.5em, itemsep=0pt, topsep=0pt]
            \item The first $j - 1$ pairs, initially poised to begin triage in both systems, complete service simultaneously at time $\frac{1}{\mu_0}$, yielding $d_m = 0$ and no contribution to the integral difference in \eqref{eq:D_area}.
            \item Since $\frac{1}{\mu_1} \leq \frac{1}{\mu_2}$, the next $k$ pairs begin at Station 1 and complete both Station 1 and subsequent triage services in sync across the two systems, again with $d_m = 0$. These pairs serve the first $k$ patients among the initial $i$, leaving $i - k$ patients in the upstream.
            \item Pair $m = (j - 1) + k + 1$, consists of $\text{NP}^*_{(1)}$, who starts at Station 1 in System 1, and a decision-maker initiating collaborative service in System 2. 
            This pair serves the first patient among the remaining $i_m = i - k$ and experiences a triage completion time difference $d_m = \frac{1}{\mu_1} - \frac{1}{\mu_2}$ due to the discrepancy in downstream service durations (decision points occur exactly $\frac{1}{\mu_0}$ after downstream service completions).
            \item The following $C^G - 1$ pairs simultaneously begin and complete collaborative service in both systems, resulting in $d_m = 0$, and serve the next $C^G - 1$ (among $i-k-1$) upstream patients.
            \item The remaining $\ell + 1 - C^G$ pairs, specifically Pair $m = (j-1) + k + 1 + r$ for $r = C^G, C^G+1, \ldots, \ell$, commence at Station 2 in both systems. They complete downstream service at times $\frac{r}{C^G \mu_2}$ and $\frac{r + 1}{C^G \mu_2}$ in Systems 1 and 2, respectively, and immediately begin triage for the first of the remaining $i - (k + r)$ upstream patients. Each such pair incurs $d_m = -\frac{1}{C^G \mu_2}$.
        \end{itemize}
        Summing the contributions given by \eqref{eq:pair_m} over all pairs yields the following expression for the integral difference in \eqref{eq:D_area_sum}: 
        \begin{align}
            D_A(i,j,k,\ell) & = \left\lceil\frac{i-k}{C^p}\right\rceil \left(\frac{1}{\mu_1} - \frac{1}{\mu_2}\right)  - \sum_{r=C^G}^\ell \left\lceil\frac{i-k-r}{C^p}\right\rceil \frac{1}{C^G \mu_2}. \label{eq:D_area_large_mu1}
        \end{align}
        \item We now consider the case $\frac{1}{\mu_2} < \frac{1}{\mu_1} \leq \frac{\ell+1}{C^G \mu_2}$. 
        From the definition of $\ell'$ in \eqref{eq:def_ell_prime}, it follows that $\frac{\ell'}{C^G \mu_2} < \frac{1}{\mu_1} \leq \frac{\ell'+1}{C^G \mu_2}$ and $C^G \leq \ell' \leq \ell$. 
        The quantity $\ell'$ represents the number of NPs initially at Station 2 in System 1 who complete downstream service strictly before $\text{NP}^*_{(1)}$. We again compute $d_m$ and $i_m$ for each pair $m$:
        \begin{itemize}[leftmargin=0pt, labelsep=0.5em, itemsep=0pt, topsep=0pt]
            \item The first $j-1$ pairs, initially poised to begin triage in both systems, contribute nothing to the integral difference, analogously to the case $\mu_1 \geq \mu_2$.
            \item Since $\frac{\ell'}{C^G \mu_2} < \frac{1}{\mu_1}$, the next $\ell'$ pairs (Pair $m = j, \ldots, (j-1) + \ell'$) comprise NPs starting at Station 2 in both systems and completing downstream service before $\text{NP}^*_{(1)}$. The first $C^G$ pairs finish at time $\frac{1}{\mu_2}$; each subsequent Pair $m = (j-1)+r$ for $r = C^G+1, C^G+2, \ldots, \ell'-C^G$ completes service at $\frac{r}{C^G \mu_2}$. Each pair returns to the upstream simultaneously, yielding $d_m = 0$. These pairs collectively serve the first $\ell'$ patients, leaving $i - \ell'$ upstream.
            \item The following $k$ pairs, starting at Station 1 in both systems, return at $\frac{1}{\mu_1}$ without delay, leaving $i - \ell' - k$ patients upstream. 
            \item Pair $m = (j-1) + \ell' + k +1$ consists of $\text{NP}^*_{(1)}$, who completes Station 1 at $\frac{1}{\mu_1}$ in System 1, and a decision-maker who leaves Station 2 at $\frac{\ell'+1}{C^G \mu_2}$ in System 2. This pair serves the first of the remaining $i_m = i-\ell'-k$ upstream patients and incurs $d_m = \frac{1}{\mu_1} - \frac{\ell'+1}{C^G \mu_2}$.
            \item The remaining $\ell - \ell'$ pairs (Pair $m = (j-1) + k + r + 1$ for $r = \ell'+1, \ell'+2, \ldots, \ell$) all begin in the Station 2 queue. For each such pair, the NP in System 1 departs at $\frac{r}{C^G \mu_2}$, while their counterpart in System 2 departs at $\frac{r+1}{C^G \mu_2}$, yielding $d_m = -\frac{1}{C^G \mu_2}$. Each pair serves the first patient among the remaining $i_m = i - k - r$ in the upstream.
        \end{itemize}
        Summing all pairwise contributions again gives the expression for $D_A(i,j,k,\ell)$ in \eqref{eq:D_area_sum}: 
        \begin{align}
            D_A(i,j,k,\ell) & = \left\lceil\frac{i-\ell'-k}{C^p}\right\rceil \left(\frac{1}{\mu_1} - \frac{\ell'+1}{C^G \mu_2}\right)  - \sum_{r=\ell'+1}^{\ell} \left\lceil\frac{i-k-r}{C^p}\right\rceil \frac{1}{C^G \mu_2}. \label{eq:D_area_mid_mu2}
        \end{align}
        \item Finally, consider $\frac{1}{\mu_1} > \frac{\ell+1}{C^G \mu_2}$, where all NPs at Station 2 in both systems complete their initial services and return upstream before those starting at Station 1.
        \begin{itemize}[leftmargin=0pt, labelsep=0.5em, itemsep=0pt, topsep=0pt]
            \item The behavior of the first $j-1$ pairs again mirrors previous cases and contributes nothing to \eqref{eq:D_area_sum}.
            \item Since $\frac{\ell}{C^G \mu_2} < \frac{\ell+1}{C^G \mu_2} < \frac{1}{\mu_1}$, the next $\ell$ pairs (Pair $m = j, j+1, \ldots, (j-1) + \ell$) behave analogously to the case where $\frac{1}{\mu_2} \leq \frac{1}{\mu_1} < \frac{\ell+1}{C^G \mu_2}$ with $\ell'$ replaced by $\ell$. They return upstream simultaneously, contributing no difference to the integral, and leave $i - \ell$ patients in the upstream.
            \item Since $\frac{\ell+1}{C^G \mu_2} < \frac{1}{\mu_1}$, Pair $m = (j-1) + \ell + 1$ comprises $\text{NP}^*_{(1)}$ and $\text{NP}^*_{(2)}$, who complete their initial services at times $\frac{1}{\mu_1}$ and $\frac{\ell+1}{C^G\mu_2}$, respectively, incurring $d_m = \frac{1}{\mu_1} - \frac{\ell+1}{C^G\mu_2}$. This pair serves the first of the remaining $i_m = i-\ell$ upstream patients.
            \item The final $k$ pairs begin at Station 1 in both systems and return to the upstream simultaneously at $\frac{1}{\mu_1}$, contributing nothing to the integral difference.
        \end{itemize}
        Since only one pair contributes to the integral difference in \eqref{eq:D_area_sum}, we obtain
        \begin{align}
            D_A(i,j,k,\ell) & = \left\lceil\frac{i-\ell}{C^p}\right\rceil \left(\frac{1}{\mu_1} - \frac{\ell+1}{C^G \mu_2}\right). \label{eq:D_area_large_mu2}
        \end{align}
    \end{enumerate}   
    Substituting \eqref{eq:D_area_large_mu1}-\eqref{eq:D_area_large_mu2} into \eqref{eq:H0_area} completes the proof and results in \eqref{eq:small_mu0_H0}.
\end{proof}

\begin{proof} [Proof of Proposition \ref{prop:large_mu0_D_sign}.]
    Consider Statement \ref{state:large_mu0_no_decision} first. 
    By Statement 1 of Lemma 4.4 in \citep{lu2026balancingindependentcollaborativeservice}, for any $(i,k,\ell) \in \tilde{\X}_{red}$ with $k \geq 1$ and $\ell \geq C^G$, we have
    \begin{align}
        \tilde{D}(0,k,\ell) = \tilde{v}(0,k,\ell) - \tilde{v}(0,k-1,\ell+1) = \frac{h_1}{\mu_1} - \frac{(\ell+1)h_2}{C^G\mu_2}. \label{eq:diff_bdry}
    \end{align}
    
    When $i=0$, substituting \eqref{eq:diff_bdry} into the definition of $G$ in \eqref{eq:diff_of_min} yields
    \begin{align}
        G(0,j,k,\ell)& = \left\{
        \begin{array}{lcl}
            \tilde{v}(0,k+j,\ell) - \tilde{v}(0,k+j-1,\ell+1), & \text{if} & \frac{h_1}{\mu_1} \leq \frac{(\ell+1) h_2}{C^G\mu_2} \\
            \tilde{v}(0,k+j-1,\ell+1) - \tilde{v}(0,k+j-1,\ell+1), & \text{if} & \frac{(\ell+1) h_2}{C^G\mu_2} < \frac{h_1}{\mu_1} \leq \frac{(\ell+2) h_2}{C^G\mu_2} \\
            \qquad \qquad \qquad \qquad\vdots & & \qquad
            \qquad \vdots\\
            \tilde{v}(0,k+1,\ell+j-1) - \tilde{v}(0,k,\ell+j), & \text{if} & \frac{h_1}{\mu_1} > \frac{(\ell+j) h_2}{C^G\mu_2} \\
        \end{array} \nonumber
        \right. \\
        & = \left\{
        \begin{array}{lcl}
            \tilde{D}(0,k+j,\ell), & \text{if} & \frac{h_1}{\mu_1} \leq \frac{(\ell+1) h_2}{C^G\mu_2}, \\
            0, & \text{if} & \frac{(\ell+1) h_2}{C^G\mu_2} < \frac{h_1}{\mu_1} \leq \frac{(\ell+j) h_2}{C^G\mu_2}, \\
            \tilde{D}(0,k+1,\ell+j-1), & \text{if} & \frac{h_1}{\mu_1} > \frac{(\ell+j) h_2}{C^G\mu_2}.
        \end{array}
        \right. \label{eq:large_mu0_D_no_decision_inte}
    \end{align}
    A final substitution of \eqref{eq:diff_bdry} into \eqref{eq:large_mu0_D_no_decision_inte} yields \eqref{eq:large_mu0_D_no_decision}.
    
    We now turn to Statement \ref{state:large_mu0_monotone}.
    \begin{align}
        \lefteqn{G(i,j,k,\ell) - G(i+1,j,k,\ell)}& \nonumber \\
        &\quad = \min_{m=0,1, \ldots, j-1}\Big\{\tilde{v}\big(i, k+1+(j-1-m), \ell+m\big)\Big\} - \min_{m=0,1, \ldots, j-1}\Big\{\tilde{v}\big(i, k+(j-1-m), \ell+1+m\big)\Big\} \label{eq:large_mu0_monotone_inte} \\
        &\quad \qquad - \min_{m=0,1, \ldots, j-1}\Big\{\tilde{v}\big(i+1, k+1+(j-1-m), \ell+m\big)\Big\} + \min_{m=0,1, \ldots, j-1}\Big\{\tilde{v}\big(i+1, k+(j-1-m), \ell+1+m\big)\Big\}. \nonumber
    \end{align}
    For any $i \geq 0$, the discussion is divided into the following cases. 
    \begin{enumerate}[label= \textbf{Case} \arabic*:, leftmargin=2.2\parindent]
        \item Suppose the first term is minimized at $m = 0$, giving the value $\tilde{v}(i,k+j,\ell)$. Also, let the last minimum be attained at $m' \in \{0,1, \ldots, j-1\}$, so its minimizing value is $\tilde{v}\big(i+1, k+(j-1-m'), \ell+1+m'\big)$. Using upper bounds $\tilde{v}(i,k+j-1,\ell+1)$ and $\tilde{v}\big(i+1, k+1+(j-1-m'), \ell+m'\big)$ for the second and third minima in \eqref{eq:large_mu0_monotone_inte}, respectively, we obtain
        \begin{align*}
            \lefteqn{G(i,j,k,\ell) - G(i+1,j,k,\ell)}&\\
            &\quad \geq \tilde{v}(i,k+j,\ell) - \tilde{v}(i,k+j-1,\ell+1) \\
            &\quad \qquad - \tilde{v}\big(i+1, k+1+(j-1-m'), \ell+m'\big) + \tilde{v}\big(i+1, k+(j-1-m'), \ell+1+m'\big) \\
            &\quad = \tilde{D}(i,k+j,\ell) - \tilde{D}\big(i+1,k+1+(j-1-m'),\ell+m'\big)\\
            &\quad \geq \tilde{D}(i,k+j,\ell) - \tilde{D}\big(i,k+1+(j-1-m'),\ell+m'\big)\\
            &\quad  = \sum_{r= 0}^{m'-1} \Big( \tilde{D}\big(i,k+1+(j-1-r),\ell+r\big) - \tilde{D}\big(i,k+(j-1-r),\ell+1+r\big)\Big)\\
            &\quad \geq 0,
        \end{align*}where the second inequality applies Proposition 3.5 when $\mu_1\geq\mu_2$ and Proposition 3.7 when $\mu_2>\mu_1$ in \citep{lu2026balancingindependentcollaborativeservice}, and the last inequality follows from Proposition 3.11 in the same work.
        \item Suppose the last minimum is attained at $m = j-1$, i.e, the minimizing value is $\tilde{v}(i+1, k, \ell+j)$, a similar argument as the previous case implies that \eqref{eq:large_mu0_monotone_inte} is non-negative.
        \item Finally, consider the case where the first and last minima are attained at $m_1 \in \{1, 2, \ldots, j-1\}$ and $m_2 \in \{0, 1, \ldots,j-2\}$, respectively. Then the first term attains the value $\tilde{v}\big(i, k+1+(j-1-m_1), \ell+m_1\big)$, which also serves as an upper bound for the second. Likewise, the last term attains $\tilde{v}\big(i+1, k+(j-1-m_2), \ell+1+m_2\big)$, which upper bounds the third. We thus obtain:
        \begin{align*}
            \lefteqn{G(i,j,k,\ell) - G(i+1,j,k,\ell)}& \nonumber \\
            &\quad \geq \tilde{v}\big(i, k+1+(j-1-m_1), \ell+m_1\big) - \tilde{v}\big(i, k+1+(j-1-m_1), \ell+m_1\big) \nonumber\\
            &\quad \qquad - \tilde{v}\big(i+1, k+(j-1-m_2), \ell+1+m_2\big) + \tilde{v}\big(i+1, k+(j-1-m_2), \ell+1+m_2\big)\\
            &\quad = 0.
    \end{align*}
    \end{enumerate}
    It remains to show that $G(i,j,k,\ell)$ becomes negative as $i$ increases.  
    Suppose the second minimum in \eqref{eq:diff_of_min} is attained at some $m' \in \{0,1, \ldots, j-1\}$, i.e., 
    \begin{align*}
        \min_{m=0,1, \ldots, j-1}\Big\{\tilde{v}\big(i, k+(j-1-m), \ell+1+m\big)\Big\} = \tilde{v}\big(i, k+(j-1-m'), \ell+1+m'\big)
    \end{align*}
    Using an upper bound $\tilde{v}\big(i, k+1+(j-1-m'), \ell+m'\big)$ for the first minimum in \eqref{eq:diff_of_min} yields
    \begin{align}
        G(i,j,k,\ell)&\leq \tilde{v}\big(i, k+1+(j-1-m'), \ell+m'\big) - \tilde{v}\big(i, k+(j-1-m'), \ell+1+m'\big) \nonumber\\
        & = \tilde{D}\big(i, k+1+(j-1-m'), \ell+m'\big). \label{eq:large_mu0_ub}
    \end{align}
    Note that for all $m' \in \{0,1, \ldots, j-1\}$, so the right-hand side of \eqref{eq:large_mu0_ub} eventually becomes negative as $i$ increases (by Proposition 3.5 when $\mu_1 \geq \mu_2$, or Proposition 3.7 when $\mu_2 > \mu_1$ in \citep{lu2026balancingindependentcollaborativeservice}). It follows that $G(i,j,k,\ell) < 0$ for all sufficiently large $i$.

    To prove Statement \ref{state:large_mu0_neg}, suppose $i$ is such that $G(i,j,k,\ell) < 0$. We establish the following identity:
    \begin{align}
        G(i,j,k,\ell) = \tilde{v}(i, k+j, \ell) - \tilde{v}(i, k+j-1, \ell+1) = \tilde{D}(i,k+j,\ell). \label{eq:large_mu0_D_general}
    \end{align}
    To this end, we first show that the first minimum in \eqref{eq:diff_of_min} is achieved at $m = 0$, yielding the value $\tilde{v}(i,k+j,\ell)$. Suppose instead that the minimum occurs at some $m_1 = \{ 1,2, \ldots, j-1\}$, so that $\tilde{v}\big(i, k+1+(j-1-m_1), \ell+m_1\big) \leq \tilde{v}(i,k+j,\ell)$. 
    Notice that the same term $\tilde{v}\big(i, k+1+(j-1-m_1), \ell+m_1\big)$ serves as an upper bound for the second minimum in \eqref{eq:diff_of_min}, leading to
    \begin{align*}
    G(i,j,k,\ell)\geq \tilde{v}\big(i, k+1+(j-1-m_1), \ell+m_1\big) - \tilde{v}\big(i, k+1+(j-1-m_1), \ell+m_1\big) \geq 0,
    \end{align*}contradicting the assumption that $ G(i,j,k,\ell) < 0$. 
    Therefore, the first minimum must be attained at $m = 0$, with value $\tilde{v}(i,k+j,\ell)$. In particular, this implies $\tilde{v}(i,k+j,\ell) \leq \tilde{v}(i,k+j-1,\ell+1)$, or equivalently, $\tilde{D}(i,k+j,\ell) \leq 0$. Then, by Proposition 3.11 in \citep{lu2026balancingindependentcollaborativeservice}, it follows that $\tilde{D}\big(i,k+1+(j-1-m_2),\ell + m_2\big) \leq 0$ for all $m_2 = 1,2,\ldots, j-1$. Hence, the second minimum in \eqref{eq:diff_of_min} is also attained at $m = 0$, i.e., the minimum value is $\tilde{v}(i,k+j-1,\ell+1)$, verifying \eqref{eq:large_mu0_D_general}.

    Suppose now $\tilde{D}(i,k+j,\ell) < 0$, which implies $\tilde{v}(i,k+j,\ell) < \tilde{v}(i,k+j-1,\ell+1)$. Again by Proposition 3.11 in \citep{lu2026balancingindependentcollaborativeservice}, this yields $\tilde{D}\big(i,k+1+(j-1-m_2),\ell + m_2\big) < 0$ for all $m_2 = 1,2,\ldots, j-1$. Consequently, the first and second terms in \eqref{eq:diff_of_min} take the minimum values $\tilde{v}(i,k+j,\ell)$ and $\tilde{v}(i,k+j-1,\ell+1)$, respectively, resulting in
    \begin{align*}
        G(i,j,k,\ell) = \tilde{v}(i,k+j,\ell) - \tilde{v}(i,k+j-1,\ell+1) = \tilde{D}(i,k+j,\ell),
    \end{align*}as desired.

    Finally, the proof of Statement \ref{state:large_mu0_pos} follows by a symmetric argument to that of Statement \ref{state:large_mu0_neg} and is omitted for brevity.
\end{proof} 

\endgroup

\end{document}